\def\MR#1{}
\theoremstyle{plain}
\newtheorem{theorem}{Theorem}[section]
\newtheorem{lemma}[theorem]{Lemma}
\newtheorem{corollary}[theorem]{Corollary}
\newtheorem{proposition}[theorem]{Proposition}
\theoremstyle{definition}
\theoremstyle{remark}
\newtheorem{remark}[theorem]{Remark}
\newcommand{\dv}{\operatorname{div}}
\numberwithin{equation}{section}
\newcommand{\bN}{\mathbb{N}}
\newcommand{\bR}{\mathbb{R}}
\newcommand{\bS}{\mathbb{S}}
\newcommand\cD{\mathcal{D}}
\newcommand\sA{\mathscr{A}}
\def\dashint{\operatorname%
{\,\,\text{\bf--}\kern-.98em\DOTSI\intop\ilimits@\!\!}}
\newcommand\dashnorm[2]{\nparallel\kern-.2em #1 \Vert_{#2}}
\begin{document}
\title[Conductivity problem with imperfect bonding interfaces]{Gradient estimates for the conductivity problem with imperfect bonding interfaces}

\author[H. Dong]{Hongjie Dong}

\author[Z. Yang]{Zhuolun Yang}

\author[H. Zhu]{Hanye Zhu}

\address[H. Dong]{Division of Applied Mathematics, Brown University, 182 George Street, Providence, RI 02912, USA}
\email{Hongjie\_Dong@brown.edu}

\address[Z. Yang]{Division of Applied Mathematics, Brown University, 182 George Street, Providence, RI 02912, USA}
\email{zhuolun\_yang@brown.edu}

\address[H. Zhu]{Department of Mathematics, Duke University, 120 Science Drive, Durham, NC 27708, USA}
\email{hanye.zhu@duke.edu}

\thanks{H. Dong was partially supported by the NSF under agreements DMS-2055244 and DMS-2350129.}
\thanks{Z. Yang was partially supported by the AMS-Simons Travel Grant.}
\thanks{H. Zhu was partially supported by the NSF under agreement DMS-2055244.}

\subjclass[2020]{35B44, 35J25, 35Q74, 74E30, 74G70}

\keywords{Optimal gradient estimates, high contrast coefficients, Robin boundary condition, imperfect bonding interfaces, conductivity of composite media}

\begin{abstract}
We study the field concentration phenomenon between two closely spaced perfect conductors with imperfect bonding interfaces of low conductivity type. The boundary condition on these interfaces is given by a Robin-type boundary condition.  We discover a \textit{new} dichotomy for the field concentration depending on the bonding parameter $\gamma$. Specifically, we show that the gradient of solution is uniformly bounded independent of $\varepsilon$ (the distance between two inclusions) when $\gamma$ is sufficiently small. However, the gradient may blow up when $\gamma$ is large. Moreover, we  identify the threshold of $\gamma$ and the optimal blow-up rates under certain symmetry assumptions.  The proof relies on a crucial anisotropic gradient estimate in the thin neck between two inclusions. We develop a general framework for establishing such estimate, which is applicable to a wide range of elliptic equations and boundary conditions.
\end{abstract}

\maketitle

\section{Introduction and Main results}

Our study is motivated by the phenomenon of field concentration in high-contrast composites, a central topic in composite material research.  When two inclusions within a composite material are positioned very close to each other and their material properties significantly differ from the surrounding matrix, it may lead to a significant build-up of stress or field intensity in the narrow gap between the inclusions. Understanding the field concentration phenomenon quantitatively is important since it may cause material failure (see, for example, \cites{BASL,Kel}). In some other cases, the inclusions are designed to create the field concentration to achieve desired enhancement of the field (see, for example, \cites{acsphotonics,KANG20191670}). Theoretical analysis related to the field concentration phenomenon originated from the effective medium theory. In the pioneering work \cite{Keller}, the electrical or thermal resistance of the narrow gaps between the inclusions was analyzed in order to estimate the effective conductivity of a medium containing a dense (nearly touching) array of perfectly conducting or insulating cylinders (2D) or perfectly conducting spheres (3D), all with perfect interfaces. See also \cite{Keller2} in the setting of elasticity. There have been significant progress over the past three decades in quantitatively analyzing the field concentration phenomenon in scenarios where the inclusions and the background matrix are perfectly bonded: optimal estimates for the local fields in the narrow gaps between the inclusions and asymptotic characterizations capturing the field concentration phenomenon have been derived.

Motivated by the significant physics work \cite{prl} and recent pioneering paper \cite{fukushima2024finiteness}, we investigate the case of non-ideal bonding. This model holds significant practical relevance,  as it takes into account the contact resistance due to roughness and possible debonding at the interface, and it also approximates the membrane structure in biological systems. It is known in physics literature that interfacial effects are very important in a variety of systems. For example, they may dramatically alter the effective properties (\cites{prl,LipVer,CHIEW198390,BenY,EVERY1992123,HASHIN1992767}). In \cites{prl, LipVer}, lower and upper bounds for the effective conductivity of a medium containing randomly packed equisized spheres with imperfect bonding interfaces were deduced by constructing trial fields that account for the complex interactions between the spheres and using minimum energy principles. It is noteworthy that the work \cites{prl,LipVer} could not provide estimates for the local fields or capture the field concentration phenomenon when the spheres are closely located, because, as stated in \cite{prl}, ``the complexity of the microstructure prohibits one from obtaining the local fields exactly". However, in this paper, we will establish optimal estimates for the local fields under certain symmetry assumptions.

Let us describe the mathematical setup.  Since the region of interest is the local narrow area in between two inclusions which are closely located,  the mathematical problem is formulated with two inclusions. Let $n\ge 2$, $\Omega \subset \bR^n$ contain two relatively strictly convex open sets $D_{1}$ and $D_{2}$ with dist$(D_1 \cup D_2, \partial \Omega) > 1$. Let
$$\varepsilon: = \mbox{dist}(D_1, D_2)$$
and $\widetilde{\Omega} := \Omega \setminus \overline{(D_1 \cup D_2)}$. When the inclusions $D_1, D_2$ and the background matrix are perfectly bonded, the conductivity problem can be modeled by the following elliptic equation:
\begin{equation}\label{general_problem}
\begin{cases}
\mathrm{div}\Big(a(x)\nabla{u}\Big)=0&\mbox{in}~\Omega,\\
u=\varphi(x)&\mbox{on}~\partial\Omega,
\end{cases}
\end{equation}
where $a$ represents the conductivity distribution, given by
$$
a = k_1 \,\chi_{D_1} + k_2 \,\chi_{D_2} + \chi_{\widetilde{\Omega}},
$$
$u$ represents the voltage potential, and $-\nabla u$ stands for the electric field. A key feature of the perfectly bonded problem is the continuity of potential and the continuity of flux across the interfaces:
\begin{equation}\label{transmission}
u |_+ = u|_- \quad \mbox{and}\quad \left.\frac{\partial{u}}{\partial\nu} \right|_+ = k_i \left.\frac{\partial{u}}{\partial\nu} \right|_- \quad \mbox{on}~\partial D_i.
\end{equation}
Here and throughout the paper, the subscripts
$\pm$ indicate the limit from outside and inside the inclusion, respectively, and $\nu$ denotes the inner normal vector on $\partial {D}_{1} \cup \partial {D}_{2}$. 
When $k_i$'s are bounded away from $0$ and infinity, it was proved in \cites{BV,LV,LN} that the gradient of solutions remains bounded independent of $\varepsilon$. However, if $k_i$'s degenerate to either $\infty$ or $0$, $\nabla u$ may blow up as $\varepsilon \to 0$. Specifically, in the perfect conducting case when $k_1 = k_2 = \infty$,  equation \eqref{general_problem} becomes
\begin{equation}\label{perfect}
\begin{cases}
\Delta u =0&\mbox{in}~\widetilde\Omega,\\
u  = U_j &\mbox{on}~\partial D_j, j = 1,2,\\
\int_{\partial D_j} \partial_\nu u \, d\sigma = 0& j=1,2,\\
u=\varphi&\mbox{on}~\partial\Omega,
\end{cases}
\end{equation}
where $U_j$ is some constant determined by the third line.
 It was known that
\begin{equation*}
\begin{cases}
\| \nabla u \|_{L^\infty(\widetilde{\Omega})} \le C\varepsilon^{-1/2} \|\varphi\|_{C^2(\partial \Omega)} &\mbox{when}~n=2,\\
\| \nabla u \|_{L^\infty(\widetilde{\Omega})} \le C|\varepsilon \ln \varepsilon|^{-1} \|\varphi\|_{C^2(\partial \Omega)} &\mbox{when}~n=3,\\
\| \nabla u \|_{L^\infty(\widetilde{\Omega})} \le C\varepsilon^{-1} \|\varphi\|_{C^2(\partial \Omega)} &\mbox{when}~n\ge 4;
\end{cases}
\end{equation*}
see \cites{AKLLL,AKL,Y1,Y2,BLY1,BLY2}. These bounds were shown to be optimal. In the insulated case where $k_1 = k_2 = 0$, equation \eqref{general_problem} becomes
\begin{equation}\label{insulated}
\begin{cases}
\Delta u =0&\mbox{in}~\widetilde\Omega,\\
\partial_\nu u  = 0 &\mbox{on}~\partial D_j, j = 1,2,\\
u=\varphi&\mbox{on}~\partial\Omega.
\end{cases}
\end{equation}
 While the optimal blow-up rate in two dimensions was captured about two decades ago in \cites{AKL,AKLLL}, the optimal rate in dimensions $n \ge 3$ was only recently identified in \cites{DLY,DLY2,li2024optimalgradientestimatesinsulated}. This optimal rate is linked to the first non-zero eigenvalue of an elliptic operator on $\bS^{n-2}$, which is determined by the principal curvatures of the inclusions. When $k_1 = 0$ and $k_2 = \infty$, it was recently shown in \cites{JiKang,DonYan23} that $\nabla u$ is bounded independent of $\varepsilon$. Thus, the study of field concentration phenomena, particularly regarding the optimal blow-up rate, when the bonding is perfect, is relatively comprehensive. For a detailed review of related work in this area, we refer the reader to the survey article \cite{Kang}.

 When the bonding between the inclusions and the background matrix is non-ideal, at least one of the transmission conditions \eqref{transmission} fails. We consider the inclusion with imperfect bonding interfaces of low conductivity type (LC-type), which can be understood as an approximation of an inclusion with a low conductivity shell as the shell thickness approaches zero. Specifically, consider an inclusion with a core-shell structure, where the shell has a thickness denoted by $t$. Let $k$ and $k_s$ be the conductivities of the core and shell, respectively. As $t \to 0$, if
$$
\gamma^{-1}: = \lim_{t \to 0} \frac{k_s}{t}
$$
exists and is positive, then the transmission conditions become
$$
\left.\frac{\partial{u}}{\partial\nu} \right|_+ = k \left.\frac{\partial{u}}{\partial\nu} \right|_- =- \gamma^{-1} (u |_+ - u|_- ),
$$
and we say that the inclusion has an imperfect bonding interface of LC-type, where $\gamma$ is called the bonding parameter. In this scenario, the continuity of potential no longer holds. It is important to note that there is another type of imperfect bonding interface known as the high conductivity type (HC-type), where the continuity of flux fails. In two dimensions, The HC-type problem is indeed the dual problem of the LC-type problem. For a detailed discussion and derivation of the transmission conditions, we refer the reader to \cites{fukushima2024finiteness,BenMil}.

Throughout this article, we consider $D_1$, $D_2$ to have imperfect bonding interface of LC-type, and let $k_1 = k_2 = \infty$. Then the conductivity problem becomes
\begin{equation}\label{eq1}
\begin{cases}
\Delta u =0&\mbox{in}~\widetilde\Omega,\\
u + \gamma \partial_\nu u = U_j &\mbox{on}~\partial D_j, j = 1,2,\\
\int_{\partial D_j} \partial_\nu u \, d\sigma = 0& j=1,2,\\
u=\varphi&\mbox{on}~\partial\Omega,
\end{cases}
\end{equation}
where $\nu$ is the inward normal vector on $\partial D_j$,  $U_j$ is some constant determined by the third line of \eqref{eq1}.
 This setting corresponds to the physical situation investigated in \cites{prl, ar} --  suspensions of metallic particles in epoxy matrices with interfacial resistance of Kapitza type. The solution $u\in H^{1}
(\widetilde\Omega)$ to equation \eqref{eq1} is indeed the minimizer of a functional in an appropriate function space (see Lemma \ref{lemma_EU}): $I[u] = \min_{v \in \sA}I[v]$, where
\begin{equation}\label{minimizer}
\begin{aligned}
\sA &:= \{ v \in H^{1}(\widetilde\Omega):\, v = \varphi~~\mbox{on}~~\partial\Omega \},\quad (v)_{\partial D_j}:= \fint_{\partial D_j} v\, d\sigma,\ j=1,2,\\
I[v] &:= \int_{\widetilde\Omega} |\nabla v|^2+\gamma^{-1}\int_{\partial D_1} |v-(v)_{\partial D_1}|^2+\gamma^{-1}\int_{\partial D_2} |v-(v)_{\partial D_2}|^2.
\end{aligned}
\end{equation} In \cite{fukushima2024finiteness}, it was proved that in two dimensions, the gradient of the solution to
\begin{equation}\label{eq2}
\begin{cases}
\Delta u =0&\mbox{in}~\bR^2\setminus \overline{(D_1 \cup D_2)},\\
u + \gamma \partial_\nu u = U_j &\mbox{on}~\partial D_j, j = 1,2,\\
\int_{\partial D_j} \partial_\nu u \, d\sigma = 0& j=1,2,\\
u - x_2 = O(|x|^{-1})&\mbox{as}~|x| \to \infty,
\end{cases}
\end{equation}
is bounded independent of $\varepsilon$ when $\gamma>0$ and $D_1, D_2$ are disks in $\bR^2$ with radius $R$, centered at $(0,-R-\varepsilon/2)$ and $(0,R+\varepsilon/2)$ respectively. It is well known in the literature \cite{AKL} that the gradient of solution to \eqref{eq2} with $\gamma = 0$ (the perfect conductivity problem) blows up as $\varepsilon \to 0$. This surprising result shows that a thin coating of low conductivity can prevent such blow-up. The authors of \cite{fukushima2024finiteness} conjectured that 
the gradient of the solution to \eqref{eq1} is always bounded independent of $\varepsilon$ for arbitrary $\gamma>0$ and boundary data $\varphi$, as a biological system cannot endure large stress.

In this article, we prove three main results:
\begin{enumerate}[1.]
\item \underline{Upper bound for gradient}: We establish in Theorem \ref{thm-1/2} an upper bound of order $\varepsilon^{-1/2}$ for the gradient of the solution to \eqref{eq1}. A direct consequence of this result, presented in Corollary \ref{cor2}, is that if $\widetilde\Omega$ is symmetric with respect to $x_n$, and the boundary value $\varphi$ is odd in $x_n$, then the gradient is bounded independent of $\varepsilon$. This generalizes the boundedness result in \cite{fukushima2024finiteness} to any dimension $n \ge 2$, more general inclusions $D_1, D_2$, and more general boundary data $\varphi$.
\item  \underline{Absence of field concentration for small $\gamma$}: We show that when the bonding parameter $\gamma$ is sufficiently small, the gradient of solution is bounded independent of $\varepsilon$. This result confirms the conjecture raised in \cite{fukushima2024finiteness} under the assumption that $\gamma$ is sufficiently small.
\item \underline{Dichotomy for field concentration}: We show that when $\gamma$ is large, the conjecture does not hold and the boundedness result in Corollary \ref{cor2} is highly unstable, in the sense that if the boundary data $\varphi$ is slightly tilted away from being odd in $x_n$ ($\varphi = \delta x_1 + x_n$ for some small $\delta$ for example), the gradient of the solution may blow up. More precisely, we show that when  $D_1$ and $D_2$ are balls of radius $R$, centered at $(0,-R-\varepsilon/2)$ and $(0,R+\varepsilon/2)$ respectively, $\Omega=B_{5R}$, and $\varphi = x_1$, there is a dichotomy for the field concentration phenomenon: the gradient of the solution to \eqref{eq1} is bounded when $0 < \gamma \le R$, but blows up when $\gamma > R$. Moreover, we establish an optimal gradient estimate when $\gamma > R$. Our analysis shows that the solution behaves similarly to that of the insulated conductivity problem in this case.
\end{enumerate}

We use the notation $x = (x', x_n)$, where $x' \in \bR^{n-1}$. We choose the coordinate so that near the origin, the part of $\partial D_1$ and $\partial D_2$, denoted by $\Gamma_+$ and $\Gamma_-$, are respectively the graphs of two $C^{1,1}$ functions in terms of $x'$. That is, for some $R_0 > 0$,
\begin{align*}
\Gamma_+ =& \left\{ x_n = \frac{\varepsilon}{2}+f(x'),~|x'|<R_0\right\},\\
\Gamma_- =& \left\{ x_n = -\frac{\varepsilon}{2}+g(x'),~|x'|<R_0\right\},
\end{align*}
where $f(x')$ and $g(x')$ are $C^{1,1}$ functions satisfying
\begin{equation}\label{fg_0}
f(0')=g(0')=0,\quad\nabla_{x'}f(0')=\nabla_{x'}g(0')=0,
\end{equation}
\begin{equation}\label{fg_1}
c_1 |x'|^2 \le f(x')-g(x')\quad\mbox{for}~~0\le |x'|<R_0,
\end{equation}
\begin{equation}\label{def:c_2}
    \|f\|_{C^{1,1}}\leq c_2, \quad\|g\|_{C^{1,1}}\leq c_2,
\end{equation}
with some positive constants $c_1$, $c_2$. For $x_0 \in \widetilde\Omega$ with $|x_0'|< R_0$, $0 < r\leq R_0 - |x_0'|$, we denote
\begin{align*}
\Omega_{x_0,r}:=\left\{(x',x_{n})\in \widetilde{\Omega}:~-\frac{\varepsilon}{2}+g(x')<x_{n}<\frac{\varepsilon}{2}+f(x'),~|x'-x_0' |<r\right\}
\end{align*}
and $\Omega_{r}:=\Omega_{0,r}$. For any domain $\mathcal{D}$, we denote the averaged $L^p$ norm of $u$ over $\mathcal{D}$ by
$$
\dashnorm{u}{L^p(\cD)}:= \left( \fint_{\cD} |u|^p \right)^{1/p}.
$$
Even though the constants $U_1$ and $U_2$ in \eqref{eq1} depend on $\varepsilon$, it can be shown that (see Lemma \ref{lem:bdd:U})
$$
\inf_{\partial \Omega} \varphi \le U_1, U_2 \le \sup_{\partial \Omega} \varphi.
$$
Therefore, by the maximum principle and classical gradient estimates (c.f. \cite{MR3059278}*{Chapters 1 and 4}, the solution $u \in H^1(\widetilde\Omega)$ of \eqref{eq1} satisfies
\begin{equation}
\label{u_C1_outside}
\|u\|_{L^\infty(\widetilde\Omega)} + \| \nabla u\|_{{L^\infty}(\widetilde\Omega \setminus \Omega_{R_0/2})} \le C\|\varphi\|_{C^{1,1}(\partial \Omega)}.
\end{equation}
As such, we focus on the following problem near the origin:
\begin{equation}\label{main_problem_narrow}
\begin{cases}
\Delta u =0&\mbox{in}~\Omega_{R_0},\\
u + \gamma \partial_\nu u = U_1&\mbox{on}~\Gamma_+,\\
u + \gamma \partial_\nu u = U_2&\mbox{on}~\Gamma_-,
\end{cases}
\end{equation}
where $\nu$ is the unit normal vector pointing upward on $\Gamma_+$ and pointing downward on $\Gamma_-$.

Our first main result is the following pointwise gradient estimate of order $\varepsilon^{-1/2}$.

\begin{theorem}
\label{thm-1/2}
Let $f$, $g$ be $C^{1,1}$ functions satisfying \eqref{fg_0}--\eqref{def:c_2}, $n \ge 2$, $\gamma > 0$, $\varepsilon \in(0,R_0/4)$, and $u \in H^1({\Omega}_{R_0})$ be a solution of \eqref{main_problem_narrow} for some constants $U_1$ and $U_2$. Then there exists a constant $C>0$ depending only on $n$, $\gamma$, $R_0$, $c_1$, and $c_2$, such that for any $x\in \Omega_{R_0/2}$ and $r=\frac{1}{4}(\varepsilon+|x'|^2)^{\frac{1}{2}}$,
\begin{equation}\label{gradient-1/2}
|\nabla u(x)| \le  C \left( r^{-1} \dashnorm{u-(U_1 + U_2)/2}{L^2(\Omega_{x,r})}  + |U_1 - U_2| \right).
\end{equation}
\end{theorem}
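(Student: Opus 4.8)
The plan is to prove the estimate \eqref{gradient-1/2} by a rescaling argument that flattens the thin neck $\Omega_{x,r}$ to a domain of unit size and then applies interior and boundary gradient estimates for harmonic functions with Robin data. First I would reduce to the case $U_1 + U_2 = 0$ by subtracting the constant $(U_1+U_2)/2$ from $u$; this only changes the right-hand side of the Robin condition to $\pm(U_1-U_2)/2$, so it costs nothing. Set $w := u - (U_1+U_2)/2$ and write $\beta := (U_1-U_2)/2$, so that $w$ is harmonic in $\Omega_{R_0}$ with $w + \gamma\partial_\nu w = \pm\beta$ on $\Gamma_\pm$. Fix $x \in \Omega_{R_0/2}$ and the radius $r = \tfrac14(\varepsilon + |x'|^2)^{1/2}$; the key structural fact is that on the slab $\{|y'-x'| < 4r\}$ the vertical gap $\varepsilon + f(y') - g(y')$ between $\Gamma_+$ and $\Gamma_-$ is comparable to $r^2$, uniformly, because of \eqref{fg_1}, \eqref{def:c_2} and the definition of $r$: indeed $\varepsilon + f - g \le \varepsilon + c_2(|x'|+4r)^2 \lesssim r^2$ and $\varepsilon + f - g \ge \varepsilon + c_1|y'|^2 \gtrsim \varepsilon + |x'|^2 \gtrsim r^2$ (handling separately $|x'| \le 8r$ and $|x'| > 8r$, the latter using $c_1 |y'|^2 \gtrsim |x'|^2$ on the slab). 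So $\Omega_{x,4r}$, after translating $x'$ to the origin, is a ``curved slab'' of horizontal width $\sim r$ and vertical height $\sim r^2$.

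Next I would perform the anisotropic rescaling that is standard in this subject: let $y = (y', y_n)$ with $y' = x' + r z'$ and $y_n$ rescaled so that the neck becomes $O(1)$ in all directions. Concretely, define $\widetilde w(z) := r^{-?}\, w(x' + rz', \cdot)$ after the change of variables that maps the region between the two graphs to a fixed reference domain $Q_1$ of unit size; the harmonicity of $w$ is preserved up to a uniformly elliptic, $C^{0,1}$-coefficient operator (the coefficients depend on $f,g$ and their rescalings, which are uniformly bounded by $c_1,c_2$ and the lower bound on the gap). Crucially, because the vertical scale is $r^2$ while the horizontal scale is $r$, the Robin condition $w + \gamma\partial_\nu w = \pm\beta$ rescales to $\widetilde w + (\gamma/r)\,\partial_{\widetilde\nu}\widetilde w = \pm\beta$ in the rescaled variables — the effective Robin parameter becomes $\gamma/r$, which is \emph{large} as $r \to 0$. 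This is exactly the regime in which the Robin condition degenerates toward a Neumann condition, but uniformly in the parameter: one has $C^{1,\alpha}$ (indeed $W^{1,p}$ for all $p$, or $C^{1,\alpha}$) estimates for $\Delta \widetilde w = 0$ in $Q_1$ with $\widetilde w + \lambda \partial_\nu \widetilde w = h$ on the curved part of $\partial Q_1$ that are \emph{uniform in $\lambda \ge \lambda_0 > 0$} (and in particular as $\lambda \to \infty$). Applying such a boundary gradient estimate on $Q_{1/2}$ gives $\|\nabla \widetilde w\|_{L^\infty(Q_{1/2})} \lesssim \|\widetilde w\|_{L^2(Q_1)} + |\beta|$, and undoing the scaling — noting that $\partial_{x'}$ picks up a factor $r^{-1}$ and $\partial_{x_n}$ picks up a factor $r^{-2}$, so the $x'$-derivative is the worst — yields $|\nabla_{x'} w(x)| \lesssim r^{-1}\,(\fint_{\Omega_{x,r}}|w|^2)^{1/2} + r^{-1}|\beta|$ and $|\partial_{x_n} w(x)| \lesssim r^{-2}(\cdots) + \cdots$; but one must be careful — I expect the clean statement \eqref{gradient-1/2} comes out by tracking the scaling of the $L^2$ average (which carries its own power of $r$ from $|\Omega_{x,r}| \sim r^{n+1}$) so that the $r^{-1}$ in front of $\dashnorm{\cdot}{L^2}$ is what survives, together with the zeroth-order $|U_1 - U_2|$ term coming from $\beta$.

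The main obstacle, and the part that needs genuine care rather than routine bookkeeping, is establishing the \emph{uniform-in-$\lambda$ (as $\lambda \to \infty$) boundary regularity estimate} for the Robin problem on the fixed rescaled domain — i.e.\ that the constant $C$ in $\|\nabla \widetilde w\|_{L^\infty(Q_{1/2})} \le C(\|\widetilde w\|_{L^2(Q_1)} + \|h\|_{C^{0,1}})$ does not degenerate as the effective parameter $\gamma/r \to \infty$. Since the excerpt advertises ``a general framework for establishing such estimate, which is applicable to a wide range of elliptic equations and boundary conditions,'' I anticipate the paper isolates precisely this as a lemma (likely stated for a one-parameter family of boundary conditions interpolating between Dirichlet/Robin/Neumann, with constants independent of the parameter), proved by a compactness/normal-families argument à la De Giorgi: if the estimate failed there would be a sequence of solutions with $\|\widetilde w_k\|_{L^2} = 1$, parameters $\lambda_k \to \infty$, and $|\nabla \widetilde w_k| \to \infty$; rescaling around the blow-up point and passing to the limit produces a nontrivial global harmonic function on a half-space satisfying a homogeneous Neumann condition with controlled growth, which must be affine, contradicting the blow-up. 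A secondary technical point is that the domain $\Omega_{x,r}$ is not exactly a product domain — the graphs $f, g$ are only $C^{1,1}$ — so the rescaled coefficients are merely Lipschitz and the rescaled boundary is only $C^{1,1}$; this is enough for $C^{1,\alpha}$ up-to-the-boundary estimates but forces one to work with the weak (variational) formulation \eqref{minimizer} and Campanato-type characterizations rather than pointwise Schauder theory. I would also need the Caccioppoli-type chaining (covering $\Omega_{x,r}$ by $O(1)$ rescaled balls, using the energy inequality from $I[\cdot]$) to pass from the local estimate back to the $L^2$ average over $\Omega_{x,r}$ appearing on the right-hand side; this step is routine given the comparability of scales established above.
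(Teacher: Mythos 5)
Your high-level plan (reduce to $U_1+U_2=0$, flatten, rescale, apply a boundary estimate that is stable under the degeneration) is in the right spirit, and the reduction and the gap-comparability $\varepsilon+f-g\sim r^2$ are correct and match the paper. But there is a real gap at the heart of the argument that the paper circumvents with a different device, and your proposed substitute does not obviously close it.

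The gap is in the rescaling step. You map the neck $\Omega_{x,r}$ (width $\sim r$, height $\sim r^2$) to a reference domain $Q_1$ of unit size in \emph{all} directions, and you assert that harmonicity is preserved ``up to a uniformly elliptic, $C^{0,1}$-coefficient operator.'' That is false: scaling the horizontal variables by $r$ and the vertical one by $r^2$ turns $\Delta$ into something of the form $r^{-2}\Delta_{z'}+r^{-4}\partial_{z_n}^2$ plus lower-order perturbations, i.e.\ after normalization an operator with ellipticity ratio $\sim r^{-2}$. There is no uniform $W^{2,p}$ or $C^{1,\alpha}$ theory in that regime, and the ``uniform-in-$\lambda$ Robin estimate'' you propose to prove by compactness would have to be simultaneously uniform in this degenerate ellipticity — a much stronger (and, as stated on a unit cube, simply incorrect) claim. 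If instead you rescale isotropically (by $r$ in every direction), ellipticity survives but the rescaled domain stays thin (height $\sim r$), and interior estimates on a domain of thickness $\delta$ carry a factor $\delta^{-1}$, so again nothing uniform comes out for free. This is precisely the obstruction that makes the thin-neck estimate nontrivial. Relatedly, your bookkeeping of the effective Robin parameter as $\gamma/r$ is inconsistent with your own $r^{-2}$ vertical scaling (it should be $\gamma/r^2$ on a unit cube), which is a symptom of the ambiguity in what rescaling you are actually performing.

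What the paper does instead is constructive and avoids the degeneracy entirely. The flattening $\Phi$ of \eqref{Phi} is designed to be $\sim$ identity (so it preserves isotropy of the operator and preserves normal directions on $\Gamma_\pm$); the flattened domain $Q_{2r,r^2}$ is \emph{still} thin. Then the Robin condition is eliminated in two explicit steps: an affine-in-$y_n$ auxiliary function $w$ reduces to a \emph{homogeneous} Robin condition, and a multiplicative factor $e^\psi$ with $\psi$ vanishing to first order at $\{y_n=\pm r^2\}$ converts homogeneous Robin into homogeneous Neumann. The crucial move you are missing comes next: with Neumann data one can perform an even-periodic extension across $\{y_n=\pm r^2\}$ in the $y_n$-direction (the paper's Step 4), which fills the thin slab out to an infinite cylinder $\mathcal{Q}_{1.9r}$. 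Only then is the isotropic rescaling by $r$ applied, producing a uniformly elliptic equation on a \emph{full-dimensional} cylinder $\mathcal{C}_{1.9}$ where classical $W^{2,p}$ and Sobolev embedding give the gradient bound, with constants independent of $\varepsilon$ and $r$. The extension step is exactly what converts ``thin domain, hopeless interior estimate'' into ``fat domain, routine interior estimate,'' and it is enabled by first reducing Robin to Neumann. Your compactness sketch (blow-up to a harmonic function on a half-space with a Neumann condition) does not engage with this: the blow-up limit in a thinning slab is not a half-space problem without doing, in effect, the same extension.

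Two smaller points. First, your ``Caccioppoli-type chaining by $O(1)$ rescaled balls'' also does not work as stated: balls that respect the vertical scale $r^2$ would number $\sim r^{-(n-1)}$ across the neck, not $O(1)$. Second, the paper's construction of the auxiliary functions $w=w_1+w_2$ and of $\psi$ requires some care precisely because $f,g$ are only $C^{1,1}$; this is why a mollification $\tilde f^\kappa,\tilde g^\kappa$ with $\kappa=(r^4-y_n^2)/r$ is built into the flattening, and why $w$ is split into a rough linear piece $w_1$ plus a small, explicitly controlled correction $w_2$. Your proposal does not anticipate this regularity issue.
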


A quick corollary of the theorem is the boundedness of the gradient under some additional symmetry assumptions. This generalizes one of the results in \cite{fukushima2024finiteness}.

\begin{corollary}\label{cor2}
For $n \ge 2$ and $\varepsilon \in(0,R_0/4)$, let $\Omega$, $D_1$, and $D_2$ be $C^{1,1}$ domains so that $\widetilde{\Omega}$ is symmetric with respect to $x_n$, and \eqref{fg_0}--\eqref{def:c_2} hold. Let $\varphi \in C^{1,1}(\partial \Omega)$ be an odd function in $x_n$,  $\gamma > 0$, and $u \in H^1(\widetilde\Omega)$ be the solution to \eqref{eq1}. Then there exists a constant $C>0$ depending only on $n$, $\gamma$, $R_0$, $\Omega$, $c_1$, and $c_2$ such that for any $x\in \widetilde\Omega$,
\begin{equation}\label{gradient-bdd}
|\nabla u(x)| \le  C \, \|\varphi\|_{C^{1,1}(\partial \Omega)}.
\end{equation}
\end{corollary}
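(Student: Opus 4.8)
The plan is to derive Corollary~\ref{cor2} from Theorem~\ref{thm-1/2} after installing two facts about $u$: that it is odd in $x_n$, and that it decays linearly in $x_n$ throughout the neck.

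\emph{Symmetrization.} Write $\sigma(x',x_n)=(x',-x_n)$. The symmetry of $\widetilde\Omega$ forces $\sigma(\Omega)=\Omega$ and $\sigma(D_1)=D_2$, so that for $v\in\sA$ the function $-v\circ\sigma$ again belongs to $\sA$ (here we use the oddness of $\varphi$) and, by a change of variables together with $(\,\cdot\,)_{\partial D_1}\leftrightarrow(\,\cdot\,)_{\partial D_2}$, satisfies $I[-v\circ\sigma]=I[v]$. Applying this to the minimizer $v=u$ of Lemma~\ref{lemma_EU} and invoking its uniqueness gives $u=-u\circ\sigma$, i.e. $u$ is odd in $x_n$; comparing the Robin data of $u$ and $-u\circ\sigma$ on $\partial D_1$ then gives $U_1=-U_2$. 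In particular $u\equiv 0$ on $\Sigma:=\{x_n=0\}\cap\widetilde\Omega$ (nonempty near the origin since \eqref{fg_0}--\eqref{fg_1} yield $f\ge 0\ge g$), $(U_1+U_2)/2=0$, and, by Lemma~\ref{lem:bdd:U}, $|U_1-U_2|=2|U_1|\le 2\|\varphi\|_{C^{1,1}(\partial\Omega)}$. Thus \eqref{gradient-1/2} simplifies to $|\nabla u(x)|\le C\bigl(r^{-1}\dashnorm{u}{L^2(\Omega_{x,r})}+\|\varphi\|_{C^{1,1}(\partial\Omega)}\bigr)$, and everything reduces to bounding the averaged norm of $u$ on $\Omega_{x,r}$.

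\emph{Reduction to a small neck.} For any fixed $\rho\in(0,R_0/2]$, the maximum principle together with classical boundary estimates on the $\varepsilon$-independent region $\{\rho\le|x'|<R_0/2\}\cap\widetilde\Omega$, where $\Gamma_\pm$ are a fixed distance $\gtrsim c_1\rho^2$ apart, upgrade \eqref{u_C1_outside} to $\|\nabla u\|_{L^\infty(\widetilde\Omega\setminus\Omega_\rho)}\le C(\rho)\|\varphi\|_{C^{1,1}(\partial\Omega)}$; also, for $\varepsilon$ bounded below the estimate is classical. So we may assume $\varepsilon$ arbitrarily small and we only need a bound on $\Omega_{\rho/8}$ for a convenient small $\rho=\rho(n,\gamma,c_1,c_2)$. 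The heart of the matter is the pointwise estimate
\[
|u(x)|\le C\,\|\varphi\|_{C^{1,1}(\partial\Omega)}\bigl(|x_n|+|x'|^2\bigr),\qquad x\in\Omega_\rho,
\]
with $C=C(n,\gamma,R_0,c_1,c_2)$.

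\emph{The barrier and the main obstacle.} By oddness it suffices to prove the displayed estimate on $\Omega_\rho^+:=\Omega_\rho\cap\{x_n>0\}$, whose boundary is $\Sigma$ (where $u=0$), the cap $\{|x'|=\rho\}$ (where $|u|\le\|\varphi\|_{L^\infty(\partial\Omega)}$), and a piece of $\Gamma_+$ (where $u+\gamma\partial_\nu u=U_1$, $\nu$ the upward normal). I would compare $u$ with the harmonic barrier
\[
W(x)=\|\varphi\|_{C^{1,1}(\partial\Omega)}\Bigl(A\,x_n+E\bigl(|x'|^2-(n-1)x_n^2\bigr)\Bigr),
\]
choosing first $E\gtrsim\rho^{-2}$ so that $W\ge|u|$ on the cap, and then $A\gtrsim\gamma^{-1}+E\rho^2$ with $\rho$ small so that, using $|\nabla_{x'}f|\le c_2|x'|$ and $0<x_n\le\tfrac\varepsilon2+\tfrac{c_2}2|x'|^2$ on $\Gamma_+$, one has $W+\gamma\partial_\nu W\ge|U_1|$ there. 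Since $W\mp u$ is then harmonic, nonnegative on $\Sigma\cup\{|x'|=\rho\}$, and satisfies $(W\mp u)+\gamma\partial_\nu(W\mp u)\ge 0$ on $\Gamma_+$, a negative minimum of $W\mp u$ is excluded in the interior, at a Dirichlet boundary point, and --- via Hopf's lemma and the Robin inequality --- on $\Gamma_+$; hence $|u|\le W$ on $\Omega_\rho^+$, which is the claimed bound. This is the step I expect to be the main obstacle: it genuinely uses both $u|_\Sigma=0$ and the Robin condition (the inequality $W+\gamma\partial_\nu W\ge|U_1|$ is what forces $u$ to be small near $\Gamma_+$, where otherwise only $|u|\le\|\varphi\|_{L^\infty}$ is available), and the precise choice of constants together with the restriction to a small neck is what makes the $\Gamma_+$ inequality hold uniformly in $\varepsilon$ --- and, notably, for every $\gamma>0$, including the large-$\gamma$ regime where non-odd data lead to blow-up.

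\emph{Conclusion.} Fix $x\in\Omega_{\rho/8}$ and $r=\tfrac14(\varepsilon+|x'|^2)^{1/2}$. From $r\ge\tfrac14|x'|$ we get $|y'|\le 5r$ for all $(y',y_n)\in\Omega_{x,r}$, hence $|y'|^2\le 25r^2$ and (using $\varepsilon\le 16r^2$) $|y_n|\le\tfrac\varepsilon2+\tfrac{c_2}2|y'|^2\le Cr^2$; in particular $\Omega_{x,r}\subset\Omega_\rho$, so the key estimate gives $|u|\le C\|\varphi\|_{C^{1,1}(\partial\Omega)}r^2$ on $\Omega_{x,r}$. Therefore $r^{-1}\dashnorm{u}{L^2(\Omega_{x,r})}\le C\|\varphi\|_{C^{1,1}(\partial\Omega)}\,r\le C\|\varphi\|_{C^{1,1}(\partial\Omega)}$, and Theorem~\ref{thm-1/2} yields $|\nabla u(x)|\le C\|\varphi\|_{C^{1,1}(\partial\Omega)}$ on $\Omega_{\rho/8}$; combined with the bound on $\widetilde\Omega\setminus\Omega_{\rho/8}$ this proves \eqref{gradient-bdd}.
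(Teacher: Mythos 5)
Your symmetrization step and the final insertion of a pointwise bound on $u$ into Theorem~\ref{thm-1/2} are exactly what the paper does; the genuine difference is in how the decay of $u$ across the neck is obtained. The paper runs a short bootstrap entirely inside Theorem~\ref{thm-1/2}: from $|u|\le\|\varphi\|_{L^\infty}$ and \eqref{gradient-1/2} with $U_1=-U_2$ it first extracts $|\nabla u(x)|\le Cr^{-1}\|\varphi\|_{L^\infty(\partial\Omega)}$, then uses $u(x',0)=0$ together with the mean value theorem along the $x_n$-segment and the bound $|x_n|\le Cr^2$ to get $|u(x)|\le Cr\|\varphi\|_{L^\infty(\partial\Omega)}$, and feeds this straight back into \eqref{gradient-1/2} to conclude — no auxiliary function is constructed. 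You instead build the explicit harmonic barrier $W=\|\varphi\|\bigl(Ax_n+E(|x'|^2-(n-1)x_n^2)\bigr)$ and compare via the maximum principle together with the Robin inequality on $\Gamma_+$, which yields the stronger estimate $|u|\lesssim(|x_n|+|x'|^2)\|\varphi\|\sim r^2\|\varphi\|$. Both arguments are correct and both deliver \eqref{gradient-bdd}: the barrier route is more hands-on (one must verify the Dirichlet inequalities on $\Sigma$ and on $\{|x'|=\rho\}$, check $W+\gamma\partial_\nu W\ge|U_1|$ on $\Gamma_+$ after shrinking $\rho$, and exclude a negative minimum on $\Gamma_+$), while the paper's bootstrap is shorter, requires no barrier tuning, and gives only the $O(r)$ decay, which is already enough. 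Two small points to tighten your write-up: at a putative negative boundary minimum on $\Gamma_+$ the weak one-sided inequality $\partial_\nu(W\mp u)\le 0$ already produces the contradiction, so Hopf's lemma is not needed, only $C^1$ regularity of $u$ up to $\Gamma_\pm$ (which holds because $f,g\in C^{1,1}$ and the oblique/Robin problem has Schauder regularity); and the symmetry of $\widetilde\Omega$ forces $f=-g$, so by \eqref{fg_1} one has $f\ge\tfrac{c_1}{2}|x'|^2\ge 0$ and hence $\Gamma_+$ genuinely lies in $\{x_n>0\}$, which is needed for your description of the three boundary pieces of $\Omega_\rho^+$.
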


It is important to note that the factor on the right-hand side of estimate \eqref{gradient-1/2} is $r$ instead of $r^2$, the latter is comparable to the height of the narrow region. A similar anisotropic estimate has served as a fundamental tool in studying both linear and nonlinear insulated conductivity problems \cites{LY2,DLY,DLY2,DYZ23,li2024optimalgradientestimatesinsulated}. A key step in establishing such estimate involves extending the solution beyond the upper and lower boundaries of the narrow region to reach a larger scale of order $r$. For the insulated conductivity problem \eqref{insulated}, where the boundary condition is the homogeneous Neumann boundary condition, this extension can be naturally achieved using even and periodic extensions after flattening the boundaries. To prove Theorem \ref{thm-1/2}, we use a special flattening map introduced by the same authors in \cite{DYZ23} that preserves normal vectors on $\Gamma_\pm$, and construct a delicate auxiliary function to manage the inhomogeneity of the boundary condition. The framework presented in this proof is very robust and can be applied to inhomogeneous Dirichlet or Neumann boundary conditions, as well as to a wide range of elliptic equations and systems.

Our second result concerns the local problem \eqref{main_problem_narrow} with sufficiently small $\gamma$. We show that $\nabla u$ is uniformly bounded independent of $\varepsilon$. Moreover, $\nabla u$ exhibits a polynomial decay when $U_1 = U_2$. Along with \eqref{u_C1_outside} and \eqref{bdd:U} below, this result confirms the conjecture raised in \cite{fukushima2024finiteness} for small $\gamma$, without any assumptions on the symmetry of the domain $\tilde{\Omega}$ or the boundary data $\varphi$.

\begin{theorem}\label{thm_polynomial_upperbound}
Let $f$, $g$ be $C^{1,1}$ functions satisfying \eqref{fg_0}--\eqref{def:c_2}, $n \ge 2$, $\varepsilon \in(0,R_0/4)$, and $u \in H^1({\Omega}_{R_0})$ be a solution of \eqref{main_problem_narrow} for some constants $U_1$ and $U_2$. Then for any $\beta  \ge 0$, there exists a positive constant $\gamma_0$, depending only on $n$, $R_0$, $c_1$, $c_2$, and $\beta$, such that if $\gamma \in (0, \gamma_0)$, then
\begin{equation}\label{grad_u_polyupperbound}
|\nabla u(x)| \le C \Big( (\sqrt\varepsilon + |x'|)^\beta \|u - (U_1 + U_2)/2\|_{L^\infty(\Omega_{R_0})} + |U_1 - U_2| \Big)  \quad \mbox{in}~ \Omega_{R_0/2},
\end{equation}
where $C > 0$ depends only on  $n$, $R_0$, $c_1$, $c_2$, $\beta$, and $\gamma$.
\end{theorem}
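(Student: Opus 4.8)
The plan is to combine Theorem~\ref{thm-1/2} with a geometric decay of $\int_{\Omega_t}|u-(U_1+U_2)/2|^2$ on the shrinking neck domains, the decay rate per dyadic step being forced to $0$ as $\gamma\to0^+$; the crucial point is that this gain comes entirely from the Robin boundary term, which for small $\gamma$ acts as a strong penalization. Set $v:=u-(U_1+U_2)/2$ and $m:=(U_1-U_2)/2$, so that $\Delta v=0$ in $\Omega_{R_0}$, $v+\gamma\partial_\nu v=m$ on $\Gamma_+$ and $v+\gamma\partial_\nu v=-m$ on $\Gamma_-$. First I would peel off the inhomogeneity with the explicit linear corrector $w(x):=\frac{m}{\gamma+\varepsilon/2}\,x_n$, which is harmonic, satisfies $\|\nabla w\|_{L^\infty(\Omega_{R_0})}\le C(\gamma)|m|$ and $|w(x)|\le C(\gamma)|m|(\varepsilon+|x'|^2)$, and — using \eqref{fg_0}--\eqref{def:c_2} — verifies the two Robin conditions up to an error of size $C(\gamma)|m|\,|x'|^2$ on $\Gamma_\pm$. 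Then $\tilde v:=v-w$ is harmonic in $\Omega_{R_0}$, satisfies $\tilde v+\gamma\partial_\nu\tilde v=\rho^{\pm}$ on $\Gamma_\pm$ with $\|\rho^{\pm}\|_{L^\infty(\Gamma_\pm\cap\{|x'|<s\})}\le C(\gamma)|m|s^2$, and $\|\tilde v\|_{L^\infty(\Omega_{R_0})}\le\|v\|_{L^\infty(\Omega_{R_0})}+C(\gamma)|m|$. Removing $m$ before iterating is essential: carrying it through the energy estimate would leave a term of order $r^{-1}|m|$ in the final bound, which is fatal as $\varepsilon\to0$.

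The heart of the argument is a one-step energy inequality for $a(t):=\int_{\Omega_t}|\tilde v|^2$. Testing the equation for $\tilde v$ against $\tilde v\eta^2$, with $\eta$ a cutoff in $x'$ equal to $1$ on $\{|x'|<t/2\}$ and supported in $\{|x'|<t\}$ (note that the linear boundary terms drop out in the homogeneous case), and absorbing the $\rho^{\pm}$ contribution, yields the Caccioppoli inequality that retains the good boundary term:
\[
\int_{\Omega_{t/2}}|\nabla\tilde v|^2+\gamma^{-1}\!\!\!\int_{\Gamma_+\cup\Gamma_-,\ |x'|<t/2}\!\!\!|\tilde v|^2\ \le\ \frac{C}{t^2}\int_{\Omega_t\setminus\Omega_{t/2}}|\tilde v|^2+C(\gamma)\,|m|^2\,t^{n+3}.
\]
On the other hand, writing $\tilde v(x)=\bigl(\tilde v(x)-\tilde v(x',\mathrm{top})\bigr)+\tilde v(x',\mathrm{top})$ and integrating $|\partial_n\tilde v|$ along vertical segments of length $\le\delta_t:=\max_{|x'|<t}(\varepsilon+f(x')-g(x'))\le Ct^2$ gives the thin-neck bound $\int_{\Omega_{t/2}}|\tilde v|^2\le C\delta_t^2\int_{\Omega_{t/2}}|\nabla\tilde v|^2+C\delta_t\int_{\Gamma_+\cup\Gamma_-,\ |x'|<t/2}|\tilde v|^2$. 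Combining the two, using $\delta_t\le Ct^2$ and rewriting $\delta_t\int_{\Gamma_\pm}|\tilde v|^2=\gamma\delta_t\cdot\gamma^{-1}\int_{\Gamma_\pm}|\tilde v|^2$, I obtain
\[
a(t/2)\ \le\ C_2\,(t^2+\gamma)\,a(t)+C(\gamma)\,|m|^2\,t^{n+3}\qquad\text{for all }t\in[2\sqrt\varepsilon,\,R_0],
\]
where, crucially, $C_2=C_2(n,R_0,c_1,c_2)$ does \emph{not} depend on $\gamma$; the factor $\gamma$ is precisely the Robin contribution, and it is this that will produce a divergent decay exponent.

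Iterating this recursion downward from $\Omega_{R_0}$ and splitting the telescoping product at the scale $t\sim\sqrt\gamma$ — the factors with $t\gtrsim\sqrt\gamma$ are $\le2C_2t^2$ and contribute a finite, $\gamma$-dependent constant together with super-polynomial decay, while those with $t\lesssim\sqrt\gamma$ are $\le2C_2\gamma<1$ once $\gamma$ is small — gives, for $t\in[\sqrt\varepsilon,R_0]$,
\[
\int_{\Omega_t}|\tilde v|^2\ \le\ C(\gamma)\Bigl(t^{\beta^\ast}\!\!\int_{\Omega_{R_0}}|\tilde v|^2+|m|^2\,t^{n+3}\Bigr),\qquad \beta^\ast:=\log_2\frac{1}{2C_2\gamma}\ \xrightarrow[\gamma\to0^+]{}\ +\infty.
\]
Since $|\Omega_t|\ge c\,t^{n+1}$ by \eqref{fg_1} and $\int_{\Omega_{R_0}}|\tilde v|^2\le C\|\tilde v\|_{L^\infty(\Omega_{R_0})}^2$, this reads $\dashnorm{\tilde v}{L^2(\Omega_t)}\le C(\gamma)\bigl(t^{(\beta^\ast-n-1)/2}\|\tilde v\|_{L^\infty(\Omega_{R_0})}+|m|\,t\bigr)$. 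Given $\beta\ge0$, I would now fix $\gamma_0=\gamma_0(n,R_0,c_1,c_2,\beta)$ so small that $2C_2\gamma_0<1$ and $\beta^\ast(\gamma)\ge n+3+2\beta$ for every $\gamma\in(0,\gamma_0)$; then $\dashnorm{\tilde v}{L^2(\Omega_t)}\le C\bigl(t^{1+\beta}\|\tilde v\|_{L^\infty(\Omega_{R_0})}+|m|\,t\bigr)$ for $t\in[\sqrt\varepsilon,R_0]$.

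Finally, I would apply Theorem~\ref{thm-1/2} to $v$ (which solves \eqref{main_problem_narrow} with the given $U_1,U_2$): for $x\in\Omega_{R_0/2}$ and $r=\tfrac14(\varepsilon+|x'|^2)^{1/2}$, $|\nabla u(x)|=|\nabla v(x)|\le C\bigl(r^{-1}\dashnorm{v}{L^2(\Omega_{x,r})}+|U_1-U_2|\bigr)$. Using $\Omega_{x,r}\subset\Omega_{Cr}$ with $|\Omega_{x,r}|\sim|\Omega_{Cr}|\sim r^{n+1}$, and $\sup_{\Omega_{x,r}}|w|\le C(\gamma)|m|(\varepsilon+(|x'|+r)^2)\le C(\gamma)|m|r^2$, gives $r^{-1}\dashnorm{v}{L^2(\Omega_{x,r})}\le C\,r^{-1}\dashnorm{\tilde v}{L^2(\Omega_{Cr})}+C(\gamma)|m|r\le C\,r^{\beta}\|\tilde v\|_{L^\infty(\Omega_{R_0})}+C(\gamma)|m|$, and since $r\le R_0$ and $r\sim\sqrt\varepsilon+|x'|$ this is at most $C(\gamma)(\sqrt\varepsilon+|x'|)^{\beta}\|u-(U_1+U_2)/2\|_{L^\infty(\Omega_{R_0})}+C(\gamma)|U_1-U_2|$, which is \eqref{grad_u_polyupperbound} (for $|x'|$ near $R_0/2$ one simply bounds $\int_{\Omega_{Cr}}|\tilde v|^2$ crudely by $\int_{\Omega_{R_0}}|\tilde v|^2$, with $r\sim1$). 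I expect the main obstacle to be the one-step inequality: obtaining the structurally correct factor $C_2(t^2+\gamma)$ — in particular isolating the $\gamma$-gain produced by the Robin term — \emph{uniformly} over all neck scales $t\in[2\sqrt\varepsilon,R_0]$ and for merely $C^{1,1}$ interfaces with the degenerate thickness $\delta_t$ (which behaves like $\varepsilon$ near $x'=0'$ and like $t^2$ near $|x'|\sim t$), together with the bookkeeping of the corrector $w$ that removes $|U_1-U_2|$; once these are in hand, the telescoping that manufactures the divergent exponent $\beta^\ast(\gamma)$ is routine.
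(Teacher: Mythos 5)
Your proposal is correct and follows essentially the same route as the paper: subtract the linear corrector $\frac{m}{\gamma+\varepsilon/2}x_n$ to kill the Robin inhomogeneity (the paper does exactly this in \eqref{grad_u_polyupperbound}'s proof after normalizing $U_1=-U_2$), establish the Caccioppoli inequality retaining the $\gamma^{-1}\int_{\Gamma}|\cdot|^2$ term and the thin-neck Poincar\'e bound \eqref{energy2}, combine them into a dyadic contraction whose per-step factor carries the crucial $O(\gamma)$ contribution from the Robin term, iterate to get decay with exponent diverging as $\gamma\to 0^+$, and finish with Theorem~\ref{thm-1/2}. The only bookkeeping difference is that the paper iterates the combined energy $A_k=\int_{\Omega_{2^{-k}s_0}}|\nabla\tilde u|^2+\gamma^{-1}\int_{\Gamma_{2^{-k}s_0}}|\tilde u|^2$ starting from a small scale $s_0$ chosen so that every step contracts by $1/N$, whereas you iterate $a(t)=\int_{\Omega_t}|\tilde v|^2$ from $R_0$ and absorb the non-contracting large-scale factors into a $\gamma$-dependent constant; both yield the same quantitative conclusion.
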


 As $\gamma \to 0$, the problem \eqref{eq1} formally converges to the perfect conductivity problem \eqref{perfect}, where $\nabla u$ may blow up as $\varepsilon \to 0$ (see \cites{AKL,BLY1}). This suggests that the constants $C$ above might go to $\infty$ as $\gamma$ approaches $0$.

For the next result, we further assume that $D_1, D_2$ to be $C^{2,\sigma}$ for some $\sigma \in (0,1)$. In addition to \eqref{fg_0}, the corresponding graph functions $f$ and $g$ will satisfy
\begin{equation}\label{fg_2}
f(x') - g(x') = \mu \, |x'|^2 + O(|x'|^{2+\sigma})\quad\mbox{for}~~0<|x'|<R_0,~ \mu > 0.
\end{equation}
We also assume that for some $1 \le j \le n-1$, the domain $\widetilde\Omega$ is symmetric with respect to $x_j$, and boundary data $\varphi \in C^{1,1}$ is odd in $x_j$. In this case, by the symmetry and the uniqueness of solutions (Lemma \ref{lemma_EU}), we know that the solution $u$ to \eqref{eq1} is odd in $x_j$, and satisfies
\begin{equation}\label{robin}
\begin{cases}
\Delta u =0&\mbox{in}~\widetilde\Omega,\\
u + \gamma \partial_\nu u = 0&\mbox{on}~\partial D_1 \cup \partial D_2,\\
\int_{\partial D_i} \partial_\nu u \, d\sigma = 0& i=1,2,\\
u= \varphi&\mbox{on}~\partial\Omega.
\end{cases}
\end{equation}
Again, we focus on the local problem of \eqref{robin}, that is, 
\begin{equation}\label{narrow_region}
\begin{cases}
\Delta u =0&\mbox{in}~\Omega_{R_0},\\
u + \gamma \partial_\nu u = 0&\mbox{on}~\Gamma_\pm,
\end{cases}
\end{equation}
where $\nu$ is the unit normal vector pointing upward on $\Gamma_+$ and pointing downward on $\Gamma_-$. We define
\begin{equation}
\label{alpha_n}
\alpha=\alpha(n,\gamma, \mu):= \frac {-(n-1)+\sqrt{(n-1)^2+4(n-2 +  2/(\mu\gamma))}}{2}.
\end{equation}
Note that $\alpha$ is always positive, and $\alpha <1$ if and only if $\gamma >  1/\mu$.

Our third result is as follows.

\begin{theorem}\label{thm_upperbound}
For $n \ge 2$, $\varepsilon \in (0,R_0/4)$, $\gamma > 0$, $\mu > 0$, let $f,g$ satisfy \eqref{fg_0} and \eqref{fg_2}, and let $u \in H^1(\Omega_{R_0})$ be a solution of \eqref{narrow_region} that is odd in $x_j$ and $\Omega_{R_0}$ is symmetric in $x_j$ for some $1 \le j \le n-1$. Then there exists a positive constant $C$, depending only on $n$, $\gamma$, $\mu$, $R_0$, $\|f\|_{C^{2,\sigma}}$, and $\|g\|_{C^{2,\sigma}}$, such that 
\begin{itemize}
\item when $0 < \gamma \le 1/\mu$,
\begin{equation}\label{grad_u_upperbound1}
|\nabla u(x)| \le C \|u\|_{L^\infty(\Omega_{R_0})} \quad \mbox{in}~ \Omega_{R_0/2},
\end{equation}
\item when $\gamma >1/\mu$,
\begin{equation}\label{grad_u_upperbound2}
|\nabla u(x)| \le C \|u\|_{L^\infty(\Omega_{R_0})} (\varepsilon + |x'|^2)^{\frac{\alpha-1}{2}} \quad \mbox{in}~  \Omega_{R_0/2},
\end{equation}
\end{itemize}
where $\alpha$ is given in \eqref{alpha_n}.
\end{theorem}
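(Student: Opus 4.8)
The plan is to reduce the problem, via the normal-preserving flattening map of \cite{DYZ23}, to a model equation in a half-neck where the Robin condition $u + \gamma\partial_\nu u = 0$ on the (flattened) boundaries becomes a constant-coefficient Robin condition on the flat walls, up to perturbative lower-order terms controlled by $\|f\|_{C^{2,\sigma}}, \|g\|_{C^{2,\sigma}}$ and the quadratic leading profile \eqref{fg_2}. After rescaling $x' \mapsto \rho y'$, $x_n \mapsto \rho^2 y_n$ at each dyadic scale $\rho \sim (\sqrt\varepsilon + |x_0'|)$, the height of the neck becomes $O(1)$ and the Robin parameter rescales so that the relevant dimensionless quantity is $1/(\mu\gamma)$; the exponent $\alpha$ in \eqref{alpha_n} is precisely the positive root of $\alpha(\alpha + n - 2) = n - 2 + 2/(\mu\gamma)$, i.e. the homogeneity of the separated solution $|x'|^\alpha$ times an eigenfunction on $\bS^{n-2}$ that is compatible with the oddness in $x_j$. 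So the heart of the matter is a growth/decay lemma: any bounded solution of the model Robin problem in the rescaled slab that is odd in $y_j$ must, by a barrier argument, decay like $\rho^\alpha$ towards the thinnest part of the neck.

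The key steps, in order, are: (i) Normalize so that $U_1 = U_2 = 0$ (already the case in \eqref{narrow_region}) and set up dyadic annuli $A_\rho := \Omega_{2\rho} \setminus \overline{\Omega_\rho}$ for $\rho$ ranging dyadically between $\sqrt\varepsilon$ and $R_0/2$. (ii) Prove an \emph{energy iteration}: using Theorem \ref{thm-1/2} to bound $\|\nabla u\|_{L^\infty}$ on each annulus by the scaled $L^2$ average of $u$, and using the oddness in $x_j$ together with a Poincar\'e-type inequality adapted to the Robin boundary term — the oddness kills the constant mode, so $\int_{\Omega_\rho}|\nabla u|^2 + \gamma^{-1}\int_{\Gamma_\pm \cap \{|x'|<\rho\}}|u|^2$ controls $\rho^{-2}\int_{\Omega_\rho}|u|^2$ — derive a differential inequality for $E(\rho) := \int_{\Omega_\rho}|\nabla u|^2 + \gamma^{-1}\int_{\partial D \cap \{|x'|<\rho\}}|u|^2$ of the form $E(\rho) \le C(\rho/\rho')^{2\alpha} E(\rho')$ for $\sqrt\varepsilon \le \rho \le \rho' \le R_0/2$. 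The decay rate $\alpha$ enters through the sharp constant in the trace-Poincar\'e inequality on the rescaled slab, which is an eigenvalue problem whose first odd-in-$y_j$ eigenvalue is computed by separation of variables and yields exactly \eqref{alpha_n}. (iii) Convert the energy decay into the pointwise bounds: for a point $x$ with $|x'| \ge \sqrt\varepsilon$, take $\rho \sim |x'|$, apply Theorem \ref{thm-1/2} at scale $r \sim |x'|$, and combine with $E(\rho) \lesssim \rho^{2\alpha}\|u\|_{L^\infty}^2$ and the Caccioppoli-type bound $\rho^{-2}\int_{\Omega_\rho}|u|^2 \lesssim E(\rho)$; for a point with $|x'| \le \sqrt\varepsilon$, use the already-bounded annulus at $\rho = \sqrt\varepsilon$ and interior estimates. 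When $0 < \gamma \le 1/\mu$ we have $\alpha \ge 1$, so $(\varepsilon + |x'|^2)^{(\alpha-1)/2}$ is bounded and \eqref{grad_u_upperbound1} follows; when $\gamma > 1/\mu$, $\alpha < 1$ and \eqref{grad_u_upperbound2} is exactly the statement.

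The main obstacle I expect is step (ii): making rigorous the claim that the sharp exponent in the energy decay is $\alpha$ rather than something larger or smaller. This requires (a) carefully handling the error terms from flattening and from the $O(|x'|^{2+\sigma})$ correction in \eqref{fg_2}, so that they only perturb $\alpha$ by an amount that can be absorbed at small scales — one wants a statement like: for any $\delta > 0$ there is $\rho_0$ such that $E(\rho) \le C(\rho/\rho')^{2(\alpha - \delta)}E(\rho')$ for $\rho \le \rho' \le \rho_0$, which is enough after summing a geometric series — and (b) identifying the limiting eigenvalue problem: on the model slab $\{|y_n| < \tfrac12 + \tfrac{\mu}{2}|y'|^2\}$ (or its limit as scales refine, the exact paraboloid), with Robin data $u + \gamma\mu \cdot(\text{normal derivative}) = 0$ suitably rescaled, separation of variables in $(|y'|, \omega, y_n)$ where $\omega \in \bS^{n-2}$ produces an ODE in $|y'|$ whose indicial roots are $\alpha$ and $-(n-1)-\alpha+1$; the oddness in $y_j$ forces the angular part to be a nonconstant spherical harmonic, but one must check that the $\ell = 1$ harmonic gives the \emph{smallest} admissible growth and that the Robin term does not lower it below what \eqref{alpha_n} predicts. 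A secondary technical point is verifying the trace-Poincar\'e inequality with the right $\varepsilon$-uniform constant on the actual (non-limiting) thin neck $\Omega_\rho$, for which one rescales and uses a compactness argument, taking care that the Robin weight $\gamma^{-1}$ survives the rescaling with the factor that makes $\mu\gamma$ the only relevant combination.
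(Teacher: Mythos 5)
The proposal is a genuinely different approach from the paper's, but it has a gap at the crucial step. The paper's proof proceeds by \emph{dimensional reduction}: after flattening via \eqref{x_to_y}, it takes the vertical average $\bar v(y') = \fint v\,dy_n$ and shows (Lemma~\ref{lemma_reduction}) that $\bar v$ solves the degenerate elliptic equation
\[
\dv\bigl((\varepsilon+\mu|y'|^2)\nabla\bar v\bigr) - 2\gamma^{-1}\bar v = \dv F + G \quad\text{in } B_{R_0}\subset\bR^{n-1},
\]
with $F,G$ explicitly controlled. It then decomposes $\bar v = v_1 + v_2$, where $v_1$ solves the homogeneous equation: after expansion into spherical harmonics, each radial coefficient solves an ODE $L_k V_{k,i} = 0$, and \emph{exact} barriers $r$ (when $\gamma\le 1/\mu$) and $r^{\alpha_k}$ (when $\gamma>1/\mu$) give the sharp decay $\hat\alpha = \min\{\alpha,1\}$ via the maximum principle (Lemma~\ref{lemma_v1}). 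The perturbation $v_2$ is bounded by a weighted Moser iteration (Lemma~\ref{lemma_v2}), and a dyadic iteration of this decomposition plus a bootstrap in the weight exponent $s$ closes the argument (Proposition~\ref{prop_grad_v_bar_control} together with Theorem~\ref{thm-1/2}).

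Your proposal keeps Theorem~\ref{thm-1/2} and the oddness structure, but replaces steps (1)--(4) with an energy iteration directly in the $n$-dimensional neck, claiming that the exponent $\alpha$ is ``the sharp constant in the trace-Poincar\'e inequality on the rescaled slab.'' That step does not go through as stated, and I think you sense this since you flag it as the main obstacle. Two concrete problems: first, a Widman-type hole-filling iteration of the form $E(\rho) \le C(\rho/\rho')^{2\beta}E(\rho')$ produces a $\beta$ that is strictly smaller than the sharp decay exponent unless you supplement it with an exact-solution comparison or an Almgren-type frequency monotonicity, neither of which you set up; the Poincar\'e eigenvalue of a fixed cross-section is simply not the same object as the indicial root $\alpha$ of the radial ODE. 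Second, the separation of variables producing ``an ODE in $|y'|$'' is not valid in the full $n$-dimensional thin region, because the slab height $\sim(\varepsilon+|x'|^2)/2$ varies with $|x'|$; the vertical averaging is exactly what collapses the $y_n$-dependence and makes the radial ODE appear. Without the averaging (and the accompanying control of the commutator errors $F,G$ from Lemma~\ref{lemma_reduction}), the indicial-root computation you cite has no rigorous home. To repair the proposal you would essentially need to reintroduce the dimensional reduction and then argue by decomposition as the paper does, at which point the trace-Poincar\'e machinery becomes unnecessary.

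One minor slip: your second indicial root should be $-(n-1)-\alpha$, not $-(n-1)-\alpha+1$; the indicial equation from \eqref{alpha_k_gamma_equality} with $k=1$ is $\alpha^2+(n-1)\alpha-(n-2+2/(\mu\gamma))=0$.
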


Estimate \eqref{grad_u_upperbound2} indicates that when $\gamma > 1/\mu$, the gradient of solution may blow up as $\varepsilon \to 0$. We provide an example to show that the blow up can actually happen and the blow-up rate is optimal as in the following theorem.

\begin{theorem}
\label{main_thm}
For $n \ge 2$, $\mu > 0$, $\gamma > 1/\mu$, $\varepsilon \in (0,1/(4\mu))$, let $\Omega = B_{5/\mu}$, $D_1 , D_2$ be balls of radius $1/\mu$ centered at $(0', 1/\mu + \varepsilon/2)$ and $(0', -1/\mu - \varepsilon/2)$, respectively, $\varphi(x) = x_1$, and let $u \in H^1(\widetilde\Omega)$ be the solution of \eqref{robin}. Then there exist positive constants $c$ and $C$, depending only on $n$, $\gamma$, and $\mu$, such that
\begin{equation}
\label{grad_u_lower_bound}
\|\nabla u\|_{L^\infty(\Omega_{c \sqrt\varepsilon})} \ge \frac{1}{C}\varepsilon^{\frac{\alpha-1}{2}}, 
\end{equation}
where $\alpha$ is given in \eqref{alpha_n}.
\end{theorem}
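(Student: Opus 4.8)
\textbf{Proof proposal for Theorem \ref{main_thm}.}

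The plan is to construct an explicit approximate solution that captures the leading-order singular behavior in the neck, and then to lower-bound the true solution's gradient by testing against it. The symmetry of the configuration (two balls, $\varphi = x_1$, with $j=1$) guarantees that $u$ is odd in $x_1$, so $u$ solves \eqref{robin} with the homogeneous Robin condition on $\partial D_1 \cup \partial D_2$, and Theorem \ref{thm_upperbound} already gives the matching \emph{upper} bound $(\varepsilon + |x'|^2)^{(\alpha-1)/2}$; hence only the lower bound \eqref{grad_u_lower_bound} remains. First I would pin down the relevant one-dimensional ODE: after passing to the separated-variable ansatz suggested by \eqref{alpha_n}, one is led to consider functions of the form $w(x) \approx x_1 \,\psi(|x'|, x_n)$ where, in the rescaled neck variables, the Robin condition $w + \gamma \partial_\nu w = 0$ on the two nearly-parabolic caps $\Gamma_\pm$ forces the radial profile to behave like $|x'|^{\alpha-1}$ near $x'=0'$ at the height of the neck; the exponent $\alpha$ is exactly the positive root of $t(t+n-2) = (n-2) + 2/(\mu\gamma)$ arising from balancing the tangential Laplacian against the curvature term $\mu$ and the Robin coefficient $\gamma^{-1}$, which is precisely \eqref{alpha_n}. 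The condition $\gamma > 1/\mu$ makes $\alpha < 1$, so $|x'|^{\alpha - 1} \to \infty$ as $x' \to 0'$, and the derivative in $x_1$ of $w$ is then of order $\varepsilon^{(\alpha-1)/2}$ when $|x'| \sim \sqrt\varepsilon$.

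The main body of the proof is an energy/variational comparison. Since $u$ minimizes $I[\cdot]$ over $\sA$ (Lemma \ref{lemma_EU}, using the functional \eqref{minimizer}), I would compare $I[u]$ with $I[v]$ for a carefully chosen competitor $v$ that equals $\varphi = x_1$ on $\partial\Omega$ and, in the neck, is close to the explicit singular profile; this produces a \emph{lower} bound on the Dirichlet energy $\int_{\widetilde\Omega}|\nabla u|^2$ localized to $\Omega_{c\sqrt\varepsilon}$, of the form $\int_{\Omega_{c\sqrt\varepsilon}} |\nabla u|^2 \gtrsim \varepsilon^{n-2}\,\varepsilon^{\alpha-1} \cdot (\text{volume factor})$, which by a mean-value argument over a ball of radius $\sim\sqrt\varepsilon$ (interior estimates for harmonic functions, \eqref{u_C1_outside}-type bounds away from $\Gamma_\pm$, and the Robin condition near $\Gamma_\pm$) yields a pointwise gradient bound $\|\nabla u\|_{L^\infty(\Omega_{c\sqrt\varepsilon})} \gtrsim \varepsilon^{(\alpha-1)/2}$. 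Equivalently, and perhaps more cleanly, one can argue by contradiction: suppose \eqref{grad_u_lower_bound} fails along a sequence $\varepsilon_k \to 0$; rescale the neck by $x = \sqrt{\varepsilon_k}\, y$ (so the gap becomes $O(1)$ and the two caps converge to two unit paraboloids $y_n = \pm(\tfrac12 + \tfrac\mu2 |y'|^2)$), extract a limit of $u / \|\nabla u\|_{L^\infty}$, and show the limit is a nontrivial global Robin solution on the limiting paraboloidal strip that is odd in $y_1$ and has controlled growth — then a Liouville-type classification forces it to be the separated profile with exponent $\alpha$, contradicting the assumed decay of the gradient. I would use the first approach (direct energy comparison with an explicit test function) since it gives quantitative constants and avoids a separate Liouville theorem.

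To make the test-function computation concrete, I would take, after the normal-preserving flattening map of \cite{DYZ23} that straightens $\Gamma_\pm$, a function of the form $v = \chi(x) \, x_1 \, h(\rho)$ in the neck (with $\rho = |x'|$, $\chi$ a cutoff supported in $\Omega_{R_0/2}$, and $h$ a smooth function interpolating between $\rho^{\alpha-1}$ for $\rho \gtrsim \sqrt\varepsilon$ and a bounded profile for $\rho \lesssim \sqrt\varepsilon$), glued to a fixed harmonic function matching $\varphi = x_1$ on $\partial\Omega$ outside the neck. One then computes $I[v]$: the dominant contribution is $\int_{\Omega_{R_0/2}} |\nabla v|^2 + \gamma^{-1}\int_{\Gamma_\pm}|v - (v)_{\partial D_i}|^2$, and the choice of exponent $\alpha$ is exactly what makes the bulk term and the boundary term balance and remain \emph{bounded} (indeed $\int \rho^{2(\alpha-1)} \rho^{n-2}\, d\rho$ converges at $\rho = 0$ precisely because $2(\alpha - 1) + n - 2 > -1 \iff \alpha > (3-n)/2$, which holds since $\alpha > 0$ and... ) — wait, more carefully, the relevant integrals are over the thin neck of height $\sim \varepsilon + \rho^2$, so the bulk energy is $\int_{\sqrt\varepsilon}^{R_0} \rho^{2(\alpha-1)} (\varepsilon + \rho^2) \rho^{n-2}\, d\rho / (\varepsilon+\rho^2)$ up to constants, which is finite and $O(1)$, while restricting to $\Omega_{c\sqrt\varepsilon}$ captures a definite fraction. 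The upshot is $I[u] \le I[v] \le C$ uniformly, but also the part of $I[u]$ \emph{not} in the neck is bounded below by a positive constant (from the fixed far-field data), forcing the neck energy of $u$ to be comparable to that of $v$ up to the singular normalization — then Caccioppoli on dyadic annuli $\{|x'| \sim 2^{-k}\}$ in the neck propagates a lower bound down to scale $\sqrt\varepsilon$.

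I expect the main obstacle to be the \emph{lower} bound on the localized energy of $u$: upper-bounding $I[u]$ by $I[v]$ is routine once $v$ is built, but deducing from "$u$ has small energy" a pointwise \emph{lower} bound on $|\nabla u|$ somewhere in $\Omega_{c\sqrt\varepsilon}$ requires ruling out cancellation — i.e., showing $u$ cannot be small throughout the neck. This is where the oddness in $x_1$ and the specific boundary data $\varphi = x_1$ are essential: they force a nonzero "flux" of $u$ through the neck (a quantitative statement that $u$ takes values of opposite sign and definite size on the two sides $\{x_1 > 0\}$, $\{x_1 < 0\}$ of the neck, inherited from $\partial\Omega$ via the maximum principle and the uniform bounds on $U_1, U_2$), and a neck of height $\sim \varepsilon$ transmitting such a flux must, by the trace/Poincaré inequality in the thin domain combined with the Robin penalization, carry energy at least $\varepsilon^{\alpha - 1}$ times a volume factor. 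Handling the Robin boundary term correctly in this lower bound — since $\gamma^{-1}$ is order one, not large, the boundary penalization is a genuine part of the balance, and one must track how the "effective" transmission through the coated neck degrades — is the technical heart, and I would model it on the capacity computation implicit in \eqref{alpha_n} rather than on the insulated-case arguments of \cites{DLY,li2024optimalgradientestimatesinsulated}, which correspond to the formal limit $\gamma \to \infty$.
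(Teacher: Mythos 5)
Your proposal takes a genuinely different route from the paper, and unfortunately the route has a gap at exactly the point you flag as the ``technical heart.'' The paper proves Theorem~\ref{main_thm} by the dimension-reduction machinery of Section~\ref{sec4}: it passes to the vertical average $\bar v$, which solves a degenerate elliptic equation in $B_1\subset\bR^{n-1}$, expands $\bar v$ in spherical harmonics, and isolates the first mode $V_{1,1}(r)$, which satisfies an ODE $L V_{1,1} = H$ with $L$ as in \eqref{def_L}. Lemma~\ref{ODE_lemma} constructs the relevant homogeneous solution $h$ with $r < h(r) < r^{\alpha}$, Lemma~\ref{lemma_subsolution} gives the lower bound $h(r) > (r^{\alpha}-(c\sqrt\varepsilon)^{\alpha})_+$, and reduction of order yields $|V_{1,1}(r) - C_1(\varepsilon)h(r)| \le C_2 r^{1+\alpha}$. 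The whole thing then hinges on showing $C_1(\varepsilon)\ge 1/C$ uniformly in $\varepsilon$, and this is where the key, very concrete comparison enters: because $D_1,D_2$ are balls of radius $1/\mu$ centered on the $x_n$-axis, $\partial_\nu x_1 = -\mu x_1$ on $\partial D_i$, so $x_1+\gamma\partial_\nu x_1 = (1-\gamma\mu)x_1 \le 0$ on $\partial D_i\cap\{x_1\ge 0\}$ whenever $\gamma>1/\mu$; hence $x_1$ is a subsolution of \eqref{robin} in $\{x_1>0\}$ and the maximum principle gives $u\ge x_1$ there, i.e.\ $V_{1,1}(r)\ge r$, which forces $C_1(\varepsilon)\ge \tfrac12 r_0^{1-\alpha}$. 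The pointwise gradient lower bound then follows by evaluating $V_{1,1}$ at $r=2c\sqrt\varepsilon$ and using $u(0)=0$ plus the mean value theorem.

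Your energy-comparison plan does not supply a substitute for that subsolution step, and as written the logic runs backwards. You have $I[u]\le I[v]$, and you want to deduce that the neck energy of $u$ is \emph{at least} something; but if you lower-bound the far-field energy of $u$ and upper-bound the total by $I[v]$, you only get an \emph{upper} bound on the neck energy, not a lower one. Moreover, the total energy $I[u]$ is $O(1)$ here (integrate the upper bound of Theorem~\ref{thm_upperbound}: $\int_0^{R_0}(\varepsilon+\rho^2)^{\alpha-1}\cdot(\varepsilon+\rho^2)\rho^{n-2}\,d\rho = O(1)$ since $\alpha>0$), so there is no divergent energy to exploit, and the neck energy in $\Omega_{c\sqrt\varepsilon}$ needed to conclude $\|\nabla u\|_{L^\infty(\Omega_{c\sqrt\varepsilon})}\gtrsim \varepsilon^{(\alpha-1)/2}$ is only of order $\varepsilon^{\alpha+(n-1)/2}$ --- a tiny fraction of the total that an $O(1)$ upper bound cannot isolate. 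Your ``flux through the neck'' heuristic is the right intuition, but quantifying it \emph{is} the problem; the paper quantifies it precisely via the subsolution $x_1$, and nothing in your sketch plays that role. The alternative blow-up/Liouville route you mention could in principle work, but the required classification of global odd Robin solutions on the limiting paraboloidal strip would essentially reproduce the ODE analysis of Lemma~\ref{ODE_lemma} anyway, so it is not a shortcut. In short: you correctly identified where the difficulty is, but the missing ingredient is the observation that $\gamma>1/\mu$ makes $x_1$ a subsolution, and without it the argument does not close.
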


\begin{remark}
The fact that $D_1$ and $D_2$ are balls of same radius is used in Step 3 of the proof of Theorem \ref{main_thm} in Section \ref{sec6}, so that $x_1$ is a subsolution of \eqref{robin}. This conclusion remains valid even if $D_1$ and $D_2$ are balls of radii $r_1$ and $r_2$ centered at $(0',r_1+\varepsilon/2)$ and $(0',-r_2-\varepsilon/2)$, respectively, under a stronger assumption that $\gamma > \max\{r_1,r_2\}$. More generally, if $D_1$ and $D_2$ are only assumed to be strictly convex smooth sets that are axially symmetric with respect to $x_n$, and $\widetilde\Omega$ is only assumed to be symmetric in $x_1$, estimate \eqref{grad_u_lower_bound} still holds when $\gamma$ is sufficiently large.
\end{remark}

\begin{remark}
It is noteworthy that Theorems \ref{thm_upperbound} and \ref{main_thm} are the first blow-up results in the context of field concentration for imperfect bonding interfaces. Note that 
$$\alpha \to \alpha_0:=\frac {-(n-1)+\sqrt{(n-1)^2+4(n-2)}}{2} \quad \mbox{as } \gamma\to \infty. $$ Therefore, the blow-up exponent in \eqref{grad_u_upperbound2} and \eqref{grad_u_lower_bound} converges to the optimal blow-up exponent $(\alpha_0-1)/2$ for the insulated conductivity problem with perfect bonding interfaces (see \cite{DLY}). This suggests that the perfect conducting inclusions with imperfect interfaces behave similarly to insulators when the bonding parameter $\gamma$ is large.
\end{remark}

\begin{remark}
When $D_1$ and $D_2$ are balls of radius $R$, centered at $(0,-R-\varepsilon/2)$ and $(0,R+\varepsilon/2)$ respectively, it holds that $\mu=1/R$, and therefore our critical value of bonding parameter for the dichotomy $\gamma=1/\mu=R$ as in Theorems \ref{thm_upperbound} and \ref{main_thm} is exactly the special bonding parameter ensuring that the inclusions are neutral to uniform fields, i.e., the insertion of the inclusions does not perturb the uniform fields (see \cites{HASHIN1992767,EVERY1992123,prl, LipVer,BenMil,fukushima2024finiteness}).  Indeed, in this critical case when $\gamma=R$, the linear potential $u=x_j$ ($j=1,2,\ldots,n$) automatically satisfies the homogeneous robin boundary condition on $\partial D_1\cup\partial D_2$ and thus is a solution to the equation \eqref{eq1} with $\varphi=x_j$. Our result reveals, for the first time, a fascinating microscopic dichotomy of the electric field at this critical bonding parameter $\gamma$.
\end{remark}

The rest of the paper is organized as follows. In the next section, we prove the uniform boundedness of $U_1$ and $U_2$, and the existence and uniqueness of solutions to Eq. \eqref{eq1}. In Section \ref{sec3}, we demonstrate the proofs of Theorem \ref{thm-1/2} and Corollary \ref{cor2} by utilizing a delicate change of variables, which preserves the normal directions on the upper and lower boundaries of $\Omega_{R_0/2}$, and an extension argument.  Theorem \ref{thm_polynomial_upperbound} is proved in Section \ref{sec_polybound}. Sections \ref{sec4}--\ref{sec6} are devoted to the proofs of Theorems \ref{thm_upperbound} and \ref{main_thm}, for which we use a dimension reduction argument to reduce the original problem into a degenerate elliptic equation in a spherical domain of $\bR^{n-1}$.  The Robin boundary condition on the interfaces introduces extra difficulties to the reduced problem in $\bR^{n-1}$ compared to the insulated case (Neumann boundary conditions), and eventually lead to a dichotomy for the field concentration depending on the bonding parameter $\gamma$.

\section{Preliminary}
In this section, we prove the uniform boundedness of $U_1$ and $U_2$ (Lemma \ref{lem:bdd:U}), and the existence and uniqueness of solutions to Eq. \eqref{eq1} (Lemma \ref{lemma_EU}).

\begin{lemma}\label{lem:bdd:U}
Let $n\ge 2$, $\Omega$, $D_1$, and $D_2$ be Lipschitz domains in $\bR^n$, $u\in H^1(\widetilde\Omega)$ be the solution to \eqref{eq1}, and $U_1$, $U_2$ be the same constants as in \eqref{eq1}. Then it holds that
\begin{equation}\label{wmp}
     \inf_{\partial \Omega} \varphi \le u \le \sup_{\partial \Omega} \varphi \quad \mbox{in } \widetilde\Omega,
\end{equation}
and
\begin{equation}\label{bdd:U}
    \inf_{\partial \Omega} \varphi \le U_j \le \sup_{\partial \Omega} \varphi, \quad j=1,2.
\end{equation}
\end{lemma}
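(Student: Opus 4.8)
The plan is to establish the two-sided bound \eqref{wmp} by an energy argument and then read off \eqref{bdd:U}, using that $U_j$ is exactly the boundary average of $u$. I would begin with two preliminary observations. Integrating the Robin condition $u+\gamma\partial_\nu u=U_j$ over $\partial D_j$ and invoking the flux-free condition $\int_{\partial D_j}\partial_\nu u\,d\sigma=0$ gives $U_j=(u)_{\partial D_j}$ for $j=1,2$. Next, since $u$ is harmonic in $\widetilde\Omega$ with $\partial_\nu u=\gamma^{-1}(U_j-u)$ on $\partial D_j$ and $\nu$ is the outward unit normal of $\widetilde\Omega$ along $\partial D_j$, integration by parts (equivalently, the variational characterization of $u$ from Lemma~\ref{lemma_EU}) yields, for every $\psi\in H^1(\widetilde\Omega)$ vanishing on $\partial\Omega$,
$$\int_{\widetilde\Omega}\nabla u\cdot\nabla\psi\,dx=-\gamma^{-1}\sum_{j=1,2}\int_{\partial D_j}(u-U_j)\psi\,d\sigma.$$

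To prove $u\le M:=\sup_{\partial\Omega}\varphi$, I would test this identity with $w:=(u-M)_+\in H^1(\widetilde\Omega)$, which is admissible since $u=\varphi\le M$ on $\partial\Omega$. Using $\nabla u\cdot\nabla w=|\nabla w|^2$ a.e., this gives $\int_{\widetilde\Omega}|\nabla w|^2\,dx=-\gamma^{-1}\sum_j\int_{\partial D_j}(u-U_j)w\,d\sigma$, so it suffices to show each boundary integral is nonnegative. Writing $u-U_j=(u-M)-(U_j-M)$, using $(u-M)w=w^2$ pointwise, and combining $U_j-M=\fint_{\partial D_j}(u-M)\,d\sigma\le\fint_{\partial D_j}w\,d\sigma$ with $\int_{\partial D_j}w\,d\sigma\ge0$, I obtain
$$\int_{\partial D_j}(u-U_j)w\,d\sigma=\int_{\partial D_j}w^2\,d\sigma-(U_j-M)\int_{\partial D_j}w\,d\sigma\ge\int_{\partial D_j}w^2\,d\sigma-\frac1{|\partial D_j|}\Big(\int_{\partial D_j}w\,d\sigma\Big)^2\ge0$$
by the Cauchy--Schwarz inequality. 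Hence $\int_{\widetilde\Omega}|\nabla w|^2\le0$, so $\nabla w\equiv0$; since $w=0$ on $\partial\Omega$ (and each connected component of $\widetilde\Omega$ meets $\partial\Omega$, so Poincar\'e's inequality applies), $w\equiv0$, i.e. $u\le M$ in $\widetilde\Omega$ and, in the trace sense, on $\partial D_1\cup\partial D_2$.

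Applying the same argument to $-u$, which solves \eqref{eq1} with boundary datum $-\varphi$ and constants $-U_1,-U_2$, yields $u\ge m:=\inf_{\partial\Omega}\varphi$ in $\widetilde\Omega$; together with the upper bound this is \eqref{wmp}. Finally, averaging the trace bound $m\le u\le M$ over $\partial D_j$ and recalling $U_j=(u)_{\partial D_j}$ gives $m\le U_j\le M$, which is \eqref{bdd:U}.

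The step I expect to be the main obstacle is controlling the sign of the boundary terms $\int_{\partial D_j}(u-U_j)w\,d\sigma$: one cannot argue pointwise, because the inequality $U_j\le M$ is itself part of the conclusion, so a naive sign comparison is circular. The Cauchy--Schwarz/Jensen manipulation above is what breaks this circularity, and as a bonus it produces \eqref{bdd:U} directly. A minor but relevant point is to run the argument through the energy method rather than via Hopf's lemma, so that only the stated Lipschitz regularity of $\Omega$, $D_1$, $D_2$ is needed.
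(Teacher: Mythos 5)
Your proof is correct and is essentially the same as the paper's: both first observe $U_j=(u)_{\partial D_j}$ from averaging the Robin condition, then test with the positive part of a shifted $u$ (the paper shifts $u$ by $\sup\varphi$ and tests with $u_+$; you test the original $u$ with $w=(u-M)_+$, which is the identical calculation), and both close the boundary term with $(u)_{\partial D_j}\le(u_+)_{\partial D_j}$ combined with Cauchy--Schwarz/Jensen. The bookkeeping differs only cosmetically, so there is no substantive difference to report.
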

\begin{proof}
By taking the average of both sides of the equality \eqref{eq1}$_2$ on $\partial D_j$ and using \eqref{eq1}$_3$, we obtain
\begin{equation}\label{averaging}
   U_j= (u)_{\partial D_j}, \ j=1,2.
\end{equation}
Thus it suffices to prove \eqref{wmp}, since \eqref{bdd:U} follows directly from \eqref{wmp} and \eqref{averaging}.
Without loss of generality, we only show that
\begin{equation*}
    u \le \sup_{\partial \Omega} \varphi \quad \mbox{in } \widetilde\Omega.
\end{equation*}
By considering $u-\sup_{\partial \Omega} \varphi$ in place of $u$, we may assume $u\le 0$ on $\partial\Omega$, and it suffices to show that
\begin{equation}\label{v_upper}
    u\le 0 \quad \mbox{in } \widetilde\Omega.
\end{equation}
Let $u_+:=\max\{u,0\}$. Testing the equation \eqref{eq1} with $u_+$ and using \eqref{averaging} yield 
\begin{align*}
    0&=\int_{\widetilde\Omega} |\nabla u_+|^{2}- \sum_{j=1}^2 \int_{\partial D_j} u_+\,\partial_\nu u=\int_{\widetilde\Omega} |\nabla u_+|^{2}+ \gamma^{-1}\sum_{j=1}^2 \int_{\partial D_j} u_+\,\big(u-(u)_{\partial D_j}\big)\\
&= \int_{\widetilde\Omega} |\nabla u_+|^{2}+ \gamma^{-1}\sum_{j=1}^2 \int_{\partial D_j} \big(u_+^2-(u)_{\partial D_j} (u_+)_{\partial D_j}\big)
\ge \int_{\widetilde\Omega} |\nabla u_+|^{2}.
\end{align*}
Here we used the fact $(u)_{\partial D_j}\le  (u_+)_{\partial D_j}$ and the Cauchy-Schwarz inequality in the last line. Therefore, since $u_+=0$ on $\partial \Omega$, we get $u_+\equiv 0$ in $\widetilde\Omega$, and thus \eqref{v_upper} holds. The proof is completed.
\end{proof}

\begin{lemma}\label{lemma_EU}
Let $n\ge 2$, $\Omega$, $D_1$, and $D_2$ be Lipschitz domains in $ \bR^n$.Then $u$ is the minimizer of \eqref{minimizer} if and only if $u\in H^1(\widetilde\Omega)$ satisfies \eqref{eq1}. Therefore, there exists a unique solution $u\in H^1(\widetilde\Omega)$ to Eq. \eqref{eq1}.
\end{lemma}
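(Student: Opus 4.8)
The plan is to apply the direct method of the calculus of variations and then identify the Euler--Lagrange equation of \eqref{minimizer} with \eqref{eq1}. For the existence of a minimizer, I would first note that $\sA$ is a nonempty closed affine subspace of $H^1(\widetilde\Omega)$ and that $I\ge 0$ on $\sA$. Since $I[v]\ge\int_{\widetilde\Omega}|\nabla v|^2$ and every $v\in\sA$ has the fixed trace $\varphi$ on $\partial\Omega$, a Poincaré inequality on $\widetilde\Omega$ (applied to $v$ minus a fixed $H^1$ extension of $\varphi$) gives $\|v\|_{H^1(\widetilde\Omega)}\le C\big(\|\nabla v\|_{L^2(\widetilde\Omega)}+\|\varphi\|_{H^{1/2}(\partial\Omega)}\big)$, so $I$ is coercive on $\sA$. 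For weak lower semicontinuity, the Dirichlet term is convex and norm-continuous, hence weakly lsc, while for the two boundary terms I would use compactness of the trace map $H^1(\widetilde\Omega)\to L^2(\partial D_j)$: along $v_k\rightharpoonup v$ in $H^1$ one has $v_k|_{\partial D_j}\to v|_{\partial D_j}$ in $L^2(\partial D_j)$ and $(v_k)_{\partial D_j}\to(v)_{\partial D_j}$, so those terms are continuous. A minimizing sequence is thus bounded in $H^1$, and any weak subsequential limit lies in $\sA$ and attains the infimum.

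Next I would derive the Euler--Lagrange equation and show equivalence. Let $u$ be a minimizer and $\phi\in H^1(\widetilde\Omega)$ with $\phi=0$ on $\partial\Omega$. In the computation of $\frac{d}{dt}\big|_{t=0}I[u+t\phi]=0$, the cross terms involving $(\phi)_{\partial D_j}$ vanish because $\int_{\partial D_j}\big(u-(u)_{\partial D_j}\big)\,d\sigma=0$, which yields the weak formulation
\begin{equation*}
\int_{\widetilde\Omega}\nabla u\cdot\nabla\phi+\gamma^{-1}\sum_{j=1}^{2}\int_{\partial D_j}\big(u-(u)_{\partial D_j}\big)\phi\,d\sigma=0 .
\end{equation*}
Taking $\phi\in C_c^\infty(\widetilde\Omega)$ gives $\Delta u=0$ in $\widetilde\Omega$, so $u$ is harmonic and the conormal derivative $\partial_\nu u$ on $\partial D_j$ is well defined (in $H^{-1/2}$, and classically once the interfaces are smoother); integrating by parts converts the identity into $\sum_j\int_{\partial D_j}\phi\big(\gamma\,\partial_\nu u+u-(u)_{\partial D_j}\big)\,d\sigma=0$. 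As the traces of admissible $\phi$ range over all of $H^{1/2}(\partial D_j)$, we conclude $u+\gamma\,\partial_\nu u=(u)_{\partial D_j}$ on $\partial D_j$; this is \eqref{eq1}$_2$ with $U_j:=(u)_{\partial D_j}$, and \eqref{eq1}$_3$ follows by integrating this identity over $\partial D_j$. Conversely, if $u\in H^1(\widetilde\Omega)$ solves \eqref{eq1} for some constants $U_1,U_2$, then \eqref{averaging} forces $U_j=(u)_{\partial D_j}$, and reversing the integration by parts recovers the weak formulation above; since $I$ is convex along affine segments and $\sA$ is affine, a critical point is automatically a global minimizer, so $u$ minimizes \eqref{minimizer}.

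Existence of a solution to \eqref{eq1} then follows from the previous two steps. For uniqueness, if $v_1,v_2\in\sA$ are both minimizers, the map $t\mapsto I\big[(1-t)v_1+tv_2\big]$ is a quadratic in $t$ that is constant, so its leading coefficient $\int_{\widetilde\Omega}|\nabla(v_1-v_2)|^2+\gamma^{-1}\sum_j\int_{\partial D_j}\big|(v_1-v_2)-(v_1-v_2)_{\partial D_j}\big|^2$ vanishes; hence $\nabla(v_1-v_2)\equiv0$, so $v_1-v_2$ is constant on the connected set $\widetilde\Omega$ and vanishes on $\partial\Omega$, giving $v_1=v_2$. The variational part is routine; the only points requiring care are the correct interpretation of $\partial_\nu u$ and of the Robin condition on Lipschitz interfaces — handled by first using interior regularity to make $\partial_\nu u$ meaningful, with the classical reading available in the smoother setting of the later theorems — and the observation that the constants $U_j$ in \eqref{eq1} are necessarily the averages $(u)_{\partial D_j}$, which is precisely the content of \eqref{averaging}.
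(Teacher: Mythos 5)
Your proof is correct and follows the same overall variational strategy as the paper: identify \eqref{eq1} as the Euler--Lagrange system for $I$, obtain $\Delta u=0$ from compactly supported variations, obtain the Robin condition from boundary variations, and deduce the constraints on $U_j$. The noteworthy differences are in the flanking steps. For uniqueness, you argue from strict convexity of $I$ on the affine set $\sA$: two minimizers give a constant quadratic in $t$, so $\nabla(v_1-v_2)\equiv0$ and the trace terms vanish, hence $v_1=v_2$ since $\widetilde\Omega$ is connected and $v_1-v_2=0$ on $\partial\Omega$. The paper instead proves at-most-one solution of \eqref{eq1} directly by invoking the maximum principle from Lemma \ref{lem:bdd:U} applied to the difference of two solutions with zero boundary data; this is a separate lemma already needed elsewhere and avoids any appeal to connectedness, whereas your argument is self-contained but implicitly uses that $\widetilde\Omega$ is connected (which holds here since $D_1,D_2$ are convex and compactly contained in $\Omega$). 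You also spell out the direct-method details (coercivity via Poincar\'e and an $H^1$ extension of $\varphi$, weak lower semicontinuity via compactness of the trace into $L^2(\partial D_j)$) which the paper just asserts from convexity. Finally, you obtain the zero-flux condition \eqref{eq1}$_3$ by integrating the Robin identity $u+\gamma\,\partial_\nu u=(u)_{\partial D_j}$ over $\partial D_j$ and using $\int_{\partial D_j}\bigl(u-(u)_{\partial D_j}\bigr)\,d\sigma=0$; the paper reaches the same conclusion by plugging into \eqref{E-L} a test function equal to $1$ on $\overline{D_1}$ and $0$ on $\overline{D_2}$ (and vice versa). Your route is a small simplification and both are valid.
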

\begin{proof}
    First, we prove that \eqref{eq1} has at most one solution $u\in H^1(\widetilde\Omega)$. Let $u_1, u_2\in W^{1,p}(\Omega)$ be two solutions of \eqref{eq1}. Then by linearity, we know that $u_0:=u_1-u_2$ is a solution to Eq. \eqref{eq1} with $\varphi= 0$. By Lemma \ref{lem:bdd:U}, we know that $u_0\equiv 0$ in $\widetilde\Omega$, and thus $u_1 \equiv u_2$. It is straightforward to see that there exists a unique minimizer of \eqref{minimizer}, due to the convexity of $I[\cdot]$ and $\sA$. Therefore, it suffices to show that the minimizer $u$ of \eqref{minimizer} satisfies \eqref{eq1}. We show this by taking different test functions $v \in \sA$ in the equation
\begin{equation}\label{critical_point}
0 = \left.\frac{d}{dt} \right|_{t = 0}  I[u + t v].
\end{equation}
First we take $v \in C_c^\infty (\widetilde\Omega)$. Then \eqref{critical_point} reads as
$$
0 = \int_{\widetilde\Omega} \nabla u \cdot \nabla v.
$$
This implies
\begin{equation}\label{delta=0}
\Delta u =0  \quad\mbox{in }\widetilde{\Omega}.
\end{equation}
Next, we take $v \in H_0^1(\Omega)$. By using integration by parts and \eqref{delta=0}, from \eqref{critical_point} we deduce
\begin{equation}\label{E-L}
    \begin{aligned}
        0&=\int_{\widetilde\Omega} \nabla u \cdot \nabla v+ \gamma^{-1} \sum_{j=1}^2 \int_{\partial D_j} \big(u-(u)_{\partial D_j}\big)\big(v-(v)_{\partial D_j}\big)\\
        &=\int_{\partial D_1\cup \partial D_2} v\, \partial_\nu u+\gamma^{-1} \sum_{j=1}^2 \int_{\partial D_j} \big(u-(u)_{\partial D_j}\big)\big(v-(v)_{\partial D_j}\big)\\
        &=\sum_{j=1}^2 \int_{\partial D_j} v\,\Big(\partial_\nu u+\gamma^{-1} \big(u-(u)_{\partial D_j}\big)\Big).
    \end{aligned}
\end{equation}
Since $D_1$, $D_2$ are disjoint and \eqref{E-L} holds for any $v\in H^1_0(\Omega)$, we know that
\begin{align}\label{ro}
    u+\gamma\, \partial_\nu u = (u)_{\partial D_j} \quad \mbox{on } \partial D_j, \quad j=1,2.
\end{align}
Finally, we choose $v\in H_0^1(\Omega)$ such that $v = 1$ in $\overline{D_1}$ and $v = 0$ in $\overline{D_2}$. 
Then \eqref{E-L} implies 
\begin{equation}\label{no-flux-1}
    \int_{\partial D_1} \partial_\nu u=0.
\end{equation}
Similarly, by choosing $v\in H_0^1(\Omega)$ such that $v = 0$ in $\overline{D_1}$ and $v = 1$ in $\overline{D_2}$, one can also show that 
\begin{equation}\label{no-flux-2}
    \int_{\partial D_2} \partial_\nu u=0.
\end{equation}
By \eqref{delta=0} and \eqref{ro}--\eqref{no-flux-2}, we can conclude that the minimizer $u$ of \eqref{minimizer} is a solution to \eqref{eq1} with $U_j=(u)_{\partial D_j}$ ($j=1,2$).
The lemma is proved.
\end{proof}

\section{A flattening and extension argument}\label{sec3}
In this section, we use a flattening and extension argument to prove Theorem \ref{thm-1/2}.  Even though we are working on the Laplace equation with Robin boundary conditions, the argument we present here is very robust, and can be applied to more general settings.

Without loss of generality, we assume $R_0=1$. For general $R_0>0$, we can make the scaling $z=x/R_0$, and consider Eq. \eqref{main_problem_narrow} in the $z$-coordinates. We also assume $U_2=-U_1$, since otherwise we can consider the equation for $u-(U_1+U_2)/2$ instead of $u$. Throughout this section, unless specify otherwise, we use $C$ to denote positive constants that could be different from line to line, but depend only on $n$, $\gamma$, $c_1$, and  $c_2$, where $c_1$ and $c_2$ are defined in \eqref{fg_1} and \eqref{def:c_2}, respectively. We fix $x=x_0\in \Omega_{1/2}$, and prove Theorem \ref{thm-1/2} at $x=x_0$. Recall that $$r=\frac{1}{4}(\varepsilon+|x_0'|^2)^{1/2}.$$ By the triangle inequality, for any $x\in \overline{\Omega_{x_0,2r}}$, we have
\begin{equation}\label{x':est}
    \frac{1}{4}|x_0'|^2-\frac{1}{4}\varepsilon\le \frac{1}{2} |x_0'|^2-|x'-x_0'|^2\le |x'|^2\le 2|x'-x_0'|^2+2|x_0'|^2\le \frac{5}{2}|x_0'|^2+\frac{1}{2} \varepsilon.
\end{equation}
By \eqref{fg_0}--\eqref{def:c_2}, we know that
\begin{equation}\label{height_bdd}
    \frac{1}{C} \,(\varepsilon+|x'|^2)\le \varepsilon+f(x')-g(x')\le C \,(\varepsilon+|x'|^2).
\end{equation}
Combining \eqref{x':est} and \eqref{height_bdd}, we obtain
\begin{equation}\label{height_final}
    \frac{1}{C}\, r^2\le \varepsilon+f(x')-g(x')\le C\,r^2
\end{equation}
for any $x\in \overline{\Omega_{x_0,2r}}$.

Let $r_0=r_0(n,c_1,c_2) \in (0, 1/4)$ be an $\varepsilon$-independent constant to be determined later. Then by the classical gradient estimates (c.f. \cite{MR3059278}*{Chapter 4}), \eqref{gradient-1/2} holds at $x=x_0$ whenever $r=\frac{1}{4}(\varepsilon+|x_0'|^2)^{1/2}>r_0$. Therefore, it suffices to prove Theorem \ref{thm-1/2} for the case when $r\in(0,r_0]$.\\

\textbf{Step 1: The flattening map.}

To ensure that the solution after the change of variables still satisfies Robin boundary conditions, we adapt the flattening map introduced in \cite{DYZ23}*{Section 3} to our setting, which preserves the normal directions on the upper and lower boundaries.

More precisely, we define
$$
\tilde{f}(x') :=
\left\{
\begin{aligned}
&f(x') && \mbox{when}~~|x'| \le 10\,r_0,\\
&0 &&  \mbox{when}~~|x'| > 10\,r_0,
\end{aligned}
\right.
$$
and 
$$
\tilde{g}(x') :=
\left\{
\begin{aligned}
&g(x') && \mbox{when}~~|x'| \le 10\,r_0,\\
&0 &&  \mbox{when}~~|x'| > 10\,r_0.
\end{aligned}
\right.
$$
We denote
$$
Q_{s,t} := \{ y = (y', y_n) \in \bR^n:~ |y'-x_0'| < s, |y_n| < t \},
$$
and for $y \in \overline{Q_{2r,r^2}}$, we define the map $x = \Phi(y)$ by
\begin{equation}\label{Phi}
\left\{
\begin{aligned}
x' &= y' - h(y),\\
x_n &= \frac{1}{2} \Big[\frac{y_n}{r^2} (\varepsilon + \tilde f(y') - \tilde g(y')) + \tilde f(y') +\tilde g(y') \Big],
\end{aligned}
\right.
\end{equation}
where
\begin{equation*}
h(y) = (y_n - r^2)(y_n + r^2)(\Theta y_n + \Xi),
\end{equation*}
$$
\left\{
\begin{aligned}
\Theta &= \frac{1}{8r^6} [\varepsilon + \tilde f(y') - \tilde g(y')] D_{y'} [\tilde f^\kappa(y') + \tilde g^\kappa(y')],\\
\Xi &= \frac{1}{8r^4} [\varepsilon + \tilde f(y') - \tilde g(y')] D_{y'} [\tilde f^\kappa(y') - \tilde g^\kappa(y')],
\end{aligned}
\right.
$$
$\tilde f^\kappa$ and $\tilde g^\kappa$ are mollifications of $\tilde f$ and $\tilde g$ given by
\begin{equation}\label{mollification}
\tilde f^\kappa (y') := \int_{\bR^{n-1}} \tilde f(y' - \kappa z') \varphi(z') \, dz',\quad \tilde g^\kappa (y') := \int_{\bR^{n-1}} \tilde g(y' - \kappa z') \varphi(z') \, dz',
\end{equation}
where $\varphi$ is a positive smooth function with unit integral supported in $B_1 \subset \bR^{n-1}$,
and 
$$\kappa = \frac{r^4 - y_n^2}{r}\ge 0.$$
As in \cite{DYZ23}*{Section 3}, the mollification \eqref{mollification} was introduced to overcome the lack of regularities of $f$ and $g$. Note that for any $y \in \overline{Q_{2r,r^2}}$, by the triangle inequality, we have 
$$|y'|\le |x_0'|+2r\le 6r\le 6r_0,$$
and thus 
\begin{equation*} 
    \tilde f(y')=f(y') \quad \mbox{and} \quad  \tilde{g}(y')=g(y').
\end{equation*}
Thus by \eqref{fg_0}, for $y \in \overline{Q_{2r,r^2}}$, we have
\begin{equation}\label{h_derivatives}
\begin{aligned}
&|D_{y'}^k \tilde f(y') | \le Cr^{2-k}, \quad 
|D_{y'}^k \tilde g(y') | \le Cr^{2-k}, \quad k=0,1,2,\\
&|D_{y'}^k \tilde f^\kappa(y') | \le Cr^{2-k}, \quad 
|D_{y'}^k \tilde g^\kappa(y') | \le Cr^{2-k}, \quad k=0,1,2.
\end{aligned}
\end{equation}

Similar to \cite{DYZ23}*{Lemma 3.1}, one can obtain several properties for the change of variables $\Phi$ as in the lemma below. The proof will be given in the Appendix for completeness. 
\begin{lemma}\label{Change_of_variable_lemma}
There exists an $r_0=r_0(n,c_1,c_2)\in(0,1/4)$ independent of $\varepsilon$  and $\gamma$, such that when $r \in (0, r_0]$ and $\Phi$ is given as \eqref{Phi}, we have
\begin{enumerate}
\item There exists a positive constant $C$ independent of $\varepsilon$ and $r$, such that
$$\frac{I}{C} \le D \Phi(y) \le C I, \quad y \in \overline{Q_{2r,r^2}},$$
and hence $\Phi$ is invertible.
\item $$Q_{1.9r,r^2} \subset \Phi^{-1}(\Omega_{ x_0, 2r})\quad
\mbox{and} \quad
\Omega_{x_0, r} \subset \Phi (Q_{1.1r, r^2}).$$
\item Let $u \in H^{1}(\Omega_{x_0,2r})$ be a solution of
\begin{equation}\label{local_equation}
\left\{
\begin{aligned}
\Delta u &=0 &&\mbox{in }\Omega_{x_0,2r},\\
u+\gamma \partial_\nu u &= U_1 &&\mbox{on } \Gamma_+  \cap \overline{\Omega_{x_0,2r}},\\
u+\gamma \partial_\nu u &= -U_1 &&\mbox{on } \Gamma_- \cap \overline{\Omega_{x_0,2r}},\\
\end{aligned}
\right.
\end{equation}
and $\tilde u(y) = u(\Phi(y))$. Then $\tilde u$ is a solution of the following elliptic equation
\begin{equation}\label{tilde_u_equation}
\left\{
\begin{aligned}
a^{ij}D_{y_iy_j}\tilde u+b^i D_{y_i}\tilde u&=0  &&\mbox{in }Q_{1.9r, r^2},\\
\tilde u+\gamma \,F_1^{-1} D_{y_n}\tilde u&=U_1 &&\mbox{on } \{y_n=r^2\}\cap \overline{Q_{1.9r, r^2}},\\
\tilde u-\gamma \, F_2^{-1} D_{y_n}\tilde u&=-U_1 &&\mbox{on } \{y_n=-r^2\}\cap \overline{Q_{1.9r, r^2}},
\end{aligned}
\right.
\end{equation}
where 
$a:=(a^{ij})_{n\times n}=(D\Phi)^{-1}((D\Phi)^{-1})^T\in C^{0,1}(\overline{Q_{1.9r, r^2}}:\bR^{n\times n})$, $b:=(b^i)_{n} \in L^\infty(\overline{Q_{1.9r, r^2}}:\bR^{n})$ satisfy
\begin{equation}\label{est:a}
\frac{I}{C} \le a \le C I, \quad |D_y a| \le \frac{C}{r}, \quad |b| \le \frac{C}{r},
\end{equation}
and
\begin{equation}\label{ortho}
a_{nj}=a_{jn}=0 \quad \mbox{on } \{y_n=\pm r^2\}\cap \overline{Q_{1.9r, r^2}},
\end{equation}
for any  $j\in \{1,2,\ldots, n-1\}$,
and $F_i\in C^{0,1}(\overline{Q_{1.9r, r^2}})\cap C^{1,1}(Q_{1.9r, r^2})$ ($i=1,2$) are defined as
\begin{equation*}
\left\{
\begin{aligned}
&F_1:=F_1(y):=\frac{1}{2r^2}(\varepsilon+f(y')-g(y'))\sqrt{1+|D_{y'} \tilde f^\kappa(y')|^2},\\
&F_2:=F_2(y):=\frac{1}{2r^2}(\varepsilon+f(y')-g(y'))\sqrt{1+|D_{y'} \tilde g^\kappa(y')|^2},
\end{aligned}
\right.
\end{equation*}
satisfying
\begin{equation}\label{F_bound}
\left\{
\begin{aligned}
&\frac{1}{C}\le F_i \le C, \quad |D_{y'} F_i|\le \frac{C}{r}, \quad |D_{y_n} F_i|\le C\, r^2,\\
& |D^2_{y'}F_i(y)|\le \frac{C\,r^2}{r^4-y_n^2},\quad |D_{y_n}D_{y'}F_i(y)|\le \frac{C\,r^3}{r^4-y_n^2}, \quad 
|D^2_{y_n}F_i(y)|\le \frac{C\,r^4}{r^4-y_n^2}.
\end{aligned}
\right.\\
\end{equation}
\end{enumerate}
\end{lemma}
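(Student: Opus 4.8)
\textbf{Proof plan for Lemma \ref{Change_of_variable_lemma}.}

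The plan is to follow the strategy of \cite{DYZ23}*{Lemma 3.1}, carefully tracking the new feature that the boundary condition is of Robin type rather than Neumann, and checking that the flattening map $\Phi$ is designed so that this structure is preserved. First I would establish (1): writing $D\Phi(y) = I + E(y)$, I would show from \eqref{h_derivatives} and the definitions of $\Theta$, $\Xi$, $\kappa$ that every entry of $E$ is $O(r)$ on $\overline{Q_{2r,r^2}}$ (the dangerous terms, those involving derivatives of $h$ in $y_n$, carry factors $\varepsilon + \tilde f - \tilde g \lesssim r^2$ against denominators $r^4$ or $r^6$ times a $y_n$-factor bounded by $r^2$, and the $y'$-derivatives of $\tilde f^\kappa \pm \tilde g^\kappa$ are $O(r)$). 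Choosing $r_0$ small then forces $\|E\|_\infty \le 1/2$, giving $I/C \le D\Phi \le CI$ and invertibility by the inverse function theorem. Part (2) is a direct inclusion check: on $\{|y_n| = r^2\}$ one has $h(y) = 0$, so $x' = y'$ and $x_n = \pm\tfrac12(\varepsilon + \tilde f - \tilde g)(y') + \tilde f \ldots$, i.e. $\Phi$ maps the top/bottom faces of $Q_{s,r^2}$ exactly onto $\Gamma_\pm$; combined with the bi-Lipschitz bound from (1), a short estimate on how much $\Phi$ can displace points (again $O(r^2)$ in $x_n$, $O(r)\cdot r = O(r^2)$ — actually $O(r)$ times the $y'$-width — in $x'$, small relative to the $0.1r$ buffer) yields the two claimed inclusions after shrinking $r_0$ further if needed.

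For part (3), the transformation of the interior equation is standard: if $u$ is harmonic then $\tilde u = u \circ \Phi$ solves $a^{ij} D_{y_iy_j}\tilde u + b^i D_{y_i}\tilde u = 0$ with $a = (D\Phi)^{-1}(D\Phi)^{-1,T}\circ\Phi^{-1}$ and $b^i = (\Delta_x y_i)\circ\Phi^{-1}$; the bounds \eqref{est:a} follow from (1) together with the estimates $|D^2\Phi| \le C/r$, which in turn come from differentiating \eqref{h_derivatives} once more and noting that the worst second derivatives of the $x_n$-component and of $h$ are $O(1/r)$ after the $r^2$ numerators and $r^4$–$r^6$ denominators are accounted for. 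The key algebraic point, and the one genuinely special to this flattening, is \eqref{ortho}: the map $\Phi$ is built (as in \cite{DYZ23}) precisely so that on $\{y_n = \pm r^2\}$ the image of the coordinate vector $\partial_{y_n}$ is parallel to the normal $\nu$ of $\Gamma_\pm$ — this is exactly what the $h$-correction with its $\tilde f^\kappa \pm \tilde g^\kappa$ terms achieves. I would verify this by computing $\partial_{y_n}\Phi$ on $\{y_n = \pm r^2\}$ and comparing with $(-\nabla_{x'} f, 1)$ (resp. $(\nabla_{x'} g, -1)$) up to scalar multiple; orthogonality of $\partial_{y_n}\Phi$ to each $\partial_{y_j}\Phi$ on the boundary, pulled back through $a = (D\Phi)^{-1}(D\Phi)^{-T}$, gives $a_{nj} = a_{jn} = 0$ there. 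Then the conormal derivative $\partial_\nu u$ on $\Gamma_\pm$ transforms, using \eqref{ortho}, into a pure $D_{y_n}\tilde u$ term times the Jacobian factor relating surface measures, which is precisely $F_i^{-1}$; substituting into $u + \gamma \partial_\nu u = \pm U_1$ yields the boundary conditions in \eqref{tilde_u_equation}. Finally, the bounds \eqref{F_bound} on $F_i$ follow by differentiating its explicit formula: the first derivatives use $|D_{y'}(f-g)| \le Cr$ and $|D_{y'}\tilde f^\kappa| \le Cr$; the second derivatives pick up the $\kappa$-dependence, and since $\kappa = (r^4 - y_n^2)/r$, a derivative of $\tilde f^\kappa$ landing on $\kappa$ (in $y_n$) or a second $y'$-derivative of $\tilde f^\kappa$ both produce the singular factor $1/\kappa \sim r/(r^4 - y_n^2)$, which matches the stated bounds after keeping track of powers of $r$.

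The main obstacle is the bookkeeping in part (3): verifying \eqref{ortho} cleanly and then extracting exactly the factor $F_i^{-1}$ in front of $D_{y_n}\tilde u$ — this requires simultaneously the orthogonality relation and the correct identification of the boundary Jacobian, and it is where one must use that $\Phi$ preserves normals rather than merely flattening. The regularity bounds \eqref{F_bound}, especially the blow-up of $D^2 F_i$ near $y_n = \pm r^2$ caused by the vanishing mollification parameter $\kappa$, are delicate but mechanical once one adopts the same mollification device as in \cite{DYZ23}. Since the statement explicitly defers the proof to the Appendix, I would present the interior-equation computation and the bi-Lipschitz estimate briefly and devote the bulk of the argument to \eqref{ortho}, the emergence of $F_i^{-1}$, and the $\kappa$-dependent second-derivative bounds.
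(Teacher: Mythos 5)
Your plan follows the same overall strategy as the paper's Appendix proof, and parts (2), (3) — including the key orthogonality relation \eqref{ortho} obtained from the normal-preserving design of $\Phi$, the emergence of $F_i^{-1}$ in the Robin condition, and the $\kappa$-dependent second-derivative bounds — are described accurately.

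However, your argument for part (1) contains a genuine error. You propose to write $D\Phi = I + E$, show every entry of $E$ is $O(r)$, and shrink $r_0$ so that $\|E\|_\infty \le 1/2$. This fails for the $(n,n)$ entry. A direct computation gives
$$D_{y_n}x_n = \frac{1}{2r^2}\bigl(\varepsilon + \tilde f(y') - \tilde g(y')\bigr),$$
and by \eqref{height_final} this quantity is only trapped between $1/C$ and $C$; it is not close to $1$. In fact, since $r^2 = \tfrac{1}{16}(\varepsilon + |x_0'|^2)$, at $y' = x_0'$ one has $D_{y_n}x_n \ge 8\min(1,c_1)$, so $E_{nn}$ can be a large constant and cannot be made small by shrinking $r_0$. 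Thus the $\|E\| < 1$ inverse-function-theorem argument does not apply. The fix (which is what the paper does) is to treat that entry separately: the off-diagonal entries and the deviation of the $(n-1)\times(n-1)$ top-left block from $I$ are $O(r)$ and $O(r^2)$ respectively (and go to zero as $r_0 \downarrow 0$), while $D_{y_n}x_n$ is bounded above and below by constants independent of $\varepsilon, r$. Together these give two-sided bounds on $D\Phi$ and its determinant, hence invertibility — the desired conclusion is true, but your smallness argument for $E$ as a whole matrix would not establish it.
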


\textbf{Step 2: Reducing to the homogeneous Robin boundary condition.}

Next, we construct an auxiliary function $w=w(y)$, such that $v:=\tilde u -w$ satisfies the homogeneous Robin boundary conditions and $w$ has good estimates. Ideally, we would like to take $w(y)$ to be the form $a(y')y_n + b(y')$. However, due to the lack of regularities of $f$ and $g$, the second derivative of $a(y')$ is very singular if we adapt the same mollification \eqref{mollification} to $f$ and $g$. To overcome this, we split $w$ into $w_1+w_2$, where $w_1$ is a linear function in $y_n$ whose coefficient is $C^{1,1}$ and approximates $a(y')$ well enough, and $w_2$ is ``small", as follows.

Let $F_0:=\frac{1}{2r^2}(\varepsilon+f-g)$. First, we choose $$w_1(y):=A\,y_n,$$ where 
\begin{equation}\label{def:A}
 A:=A(y'):=\frac{U_1}{r^2+\gamma\, F_0^{-1}}.
\end{equation}
Then $w_1$ satisfies
\begin{equation}\label{eq:w1}
\left\{
\begin{aligned}
&w_1+\gamma \,F_0^{-1}\, D_{y_n}w_1= U_1 
&&\mbox{on } \{y_n=r^2\}\cap \overline{Q_{1.9r, r^2}},\\
&w_1-\gamma \, F_0^{-1}\, D_{y_n}w_1= -U_1 &&\mbox{on } \{y_n=-r^2\}\cap \overline{Q_{1.9r, r^2}}.
\end{aligned}
\right.
\end{equation}
Now we choose $w_2$ such that
\begin{equation}\label{eq:w2}
\left\{
\begin{aligned}
&w_2+\gamma \,F_1^{-1}\, D_{y_n}w_2= \gamma (F_0^{-1}-F_1^{-1})D_{y_n}w_1\quad\mbox{on } \{y_n=r^2\}\cap \overline{Q_{1.9r, r^2}},\\
&w_2-\gamma \,F_2^{-1}\, D_{y_n}w_2= \gamma (F_2^{-1}-F_0^{-1})D_{y_n}w_1\quad \mbox{on } \{y_n=-r^2\}\cap \overline{Q_{1.9r, r^2}}.
\end{aligned}
\right.
\end{equation}
We use the ansatz $$w_2=(y_n+r^2)(y_n-r^2)(\overline{A}\,y_n+\overline{B}),$$ where 
$$ \overline{A}:=\overline{A}(y):=\frac{A}{4r^4}\big( F_1(F_0^{-1}-F_1^{-1})-F_2(F_2^{-1}-F_0^{-1})\big)=\frac{A}{4r^4F_0}\big((F_1-F_0)-(F_0-F_2)\big),$$
$$ \overline{B}:=\overline{B}(y):=\frac{A}{4r^2}\big( F_1(F_0^{-1}-F_1^{-1})+F_2(F_2^{-1}-F_0^{-1})\big)=\frac{A}{4r^2F_0}\big((F_1-F_0)+(F_0-F_2)\big),$$
and $A=A(y')$ is the same function as in \eqref{def:A}, so that
$w_2$ satisfies \eqref{eq:w2}. Note that $w_2$ is sufficiently small since $F_i$ is close to $F_0$ (see the proof of Lemma \ref{lem:minus} below). Finally, we let $$w:=w_1+w_2.$$ Then by \eqref{eq:w1} and \eqref{eq:w2}, we know that $w$ satisfies 
\begin{equation}\label{eq:w}
\left\{
\begin{aligned}
&w+\gamma \,F_1^{-1}\, D_{y_n}w= U_1 &&\mbox{on } \{y_n=r^2\}\cap \overline{Q_{1.9r, r^2}},\\
&w-\gamma \, F_2^{-1}\, D_{y_n}w= -U_1 &&\mbox{on } \{y_n=-r^2\}\cap \overline{Q_{1.9r, r^2}}.
\end{aligned}
\right.
\end{equation}
Let $v=\tilde{u}-w$. Then we have the following lemma.

\begin{lemma}\label{lem:minus}
Let $\tilde u$, $a$, $b$, $F$, $G$ be defined as in Lemma \ref{Change_of_variable_lemma}, and $w$, $v$ be as above. Then
$v$ is a solution to the following equation 
\begin{equation}\label{v_equation}
\left\{
\begin{aligned}
a^{ij}D_{y_iy_j} v+b^i D_{y_i} v&=- a^{ij}D_{y_iy_j} w-b^i D_{y_i} w&&\mbox{in }Q_{1.9r, r^2},\\
v+\gamma \,F_1^{-1} D_{y_n}v&=0 &&\mbox{on } \{y_n=r^2\}\cap \overline{Q_{1.9r, r^2}},\\
v-\gamma \,F_2^{-1} D_{y_n}v&=0 &&\mbox{on } \{y_n=-r^2\}\cap \overline{Q_{1.9r, r^2}}.
\end{aligned}
\right.
\end{equation}
Moreover, we have $w\in C^{1,1}(\overline{Q_{1.9r, r^2}})$, satisfying
\begin{equation}\label{est:w}
|w|\le C\,r^2\,|U_1|,\quad
|D_y w|\le C\,|U_1|,\quad |D^2_{y} w|\le \frac{C\,|U_1|}{r}.
\end{equation}
\end{lemma}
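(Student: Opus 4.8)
The proof splits into two independent parts: the PDE/boundary identity for $v$, and the quantitative estimates \eqref{est:w} for $w$. The first part is essentially bookkeeping: since $\tilde u$ solves \eqref{tilde_u_equation} and $w$ satisfies \eqref{eq:w}, I would simply subtract. In the interior, $\Delta_a \tilde u := a^{ij}D_{y_iy_j}\tilde u + b^i D_{y_i}\tilde u = 0$ while $\Delta_a w$ is not zero (since $w$ was built only to fix the boundary conditions, not to be $a$-harmonic), so $v = \tilde u - w$ solves $\Delta_a v = -\Delta_a w$, which is exactly line one of \eqref{v_equation}. On $\{y_n = r^2\}$, subtracting $w + \gamma F_1^{-1}D_{y_n}w = U_1$ from $\tilde u + \gamma F_1^{-1}D_{y_n}\tilde u = U_1$ kills the right-hand side, giving the homogeneous Robin condition; likewise on $\{y_n=-r^2\}$ using the $F_2$ condition. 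So the first assertion is immediate from the construction.

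The substance is in proving \eqref{est:w}. Here the key quantitative inputs are: (i) the bounds \eqref{F_bound} on $F_1,F_2$ and their derivatives, together with the elementary fact that $F_0$ enjoys the same bounds as $F_1,F_2$ (it is $(2r^2)^{-1}(\varepsilon+f-g)$, bounded above and below on $\overline{Q_{1.9r,r^2}}$ by \eqref{height_final}, with $|D_{y'}F_0|\le C/r$ from \eqref{h_derivatives} and the singular second-derivative bounds inherited from the mollification exactly as for $F_1,F_2$); (ii) the smallness $|F_i - F_0| \le C r^{?}$ coming from $|D_{y'}\tilde f^\kappa|^2, |D_{y'}\tilde g^\kappa|^2 \le Cr^2$, so $\sqrt{1+|D_{y'}\tilde f^\kappa|^2} = 1 + O(r^2)$ and hence $F_i - F_0 = F_0\cdot O(r^2)$, i.e. $|F_i-F_0|\le Cr^2$ (and similarly their first/second derivatives are controlled by the corresponding derivative bounds in \eqref{F_bound} and \eqref{h_derivatives}, with the same $r^4-y_n^2$ denominators that ultimately cancel against the $(y_n^2-r^4)$ prefactor in $w_2$); (iii) the bound on $A = U_1/(r^2+\gamma F_0^{-1})$: since $F_0 \ge 1/C$ we have $r^2 + \gamma F_0^{-1} \ge \gamma/C$, so $|A| \le C|U_1|$, and differentiating, $|D_{y'}A| \le C|U_1| \cdot |D_{y'}F_0^{-1}| \le C|U_1|/r$ using $|D_{y'}F_0|\le C/r$, while $|D^2_{y'}A|$ carries a $1/(r^4-y_n^2)$ singularity from $D^2_{y'}F_0$.

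For $w_1 = A(y')y_n$ on $\overline{Q_{1.9r,r^2}}$ (where $|y_n|\le r^2$), this directly gives $|w_1|\le |A|r^2 \le Cr^2|U_1|$, $|D_{y'}w_1|\le |D_{y'}A|\,r^2 + |A| \cdot 0 \le Cr|U_1|$ wait — more carefully $|D_{y'}w_1| = |D_{y'}A|\,|y_n| \le (C|U_1|/r)(r^2) = Cr|U_1|$ and $|D_{y_n}w_1| = |A| \le C|U_1|$, so $|D_y w_1|\le C|U_1|$; and $|D^2 w_1|$: the worst term $|D^2_{y'}A|\,|y_n|$ is bounded by $\frac{C|U_1|}{r^4-y_n^2}\cdot r^2 \le \frac{C|U_1|}{r^2}$ near the boundary — this is worse than $C|U_1|/r$, so I must be more careful. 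The resolution, exactly as in \cite{DYZ23}, is that the singular second-derivative bound in \eqref{F_bound} degenerates only like $(r^4-y_n^2)^{-1}$ and on $\overline{Q_{1.9r,r^2}}$ one does not get arbitrarily close to $|y_n|=r^4/\,\cdot$; in fact the mollification parameter $\kappa = (r^4-y_n^2)/r$ stays comparable to $r^3$ for $|y_n|\le r^2/2$, and near $|y_n| = r^2$ one uses instead that $w_2$ has the factor $(y_n^2-r^4)$ which vanishes there. So the estimate $|D^2 w|\le C|U_1|/r$ should be obtained by combining: for $w_1$, using that $A$ is genuinely $C^{1,1}$ in $y'$ with $\|D^2_{y'}A\|\le C|U_1|/r^2$ only away from the degenerate set and exploiting that $\|A\|_{C^{1,1}}\le C|U_1|/r^2 \cdot$... — actually the cleanest route, which I'd follow, is: $A$ depends on $y'$ only, and $\|A\|_{L^\infty}\le C|U_1|$, $\|D_{y'}A\|_{L^\infty}\le C|U_1|/r$; claim $\|D^2_{y'}A\|_{L^\infty(\overline{Q_{1.9r,r^2}})}\le C|U_1|/r^2$. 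This would give $|D^2 w_1| \le \|D^2_{y'}A\| r^2 + 2\|D_{y'}A\| \le C|U_1|$, even better than needed. For $w_2 = (y_n^2-r^4)(\bar A y_n + \bar B)$, write the second identities $\bar A = \frac{A}{4r^4F_0}((F_1-F_0)-(F_0-F_2))$, $\bar B = \frac{A}{4r^2F_0}((F_1-F_0)+(F_0-F_2))$; using $|F_i-F_0|\le Cr^2$, $|A|\le C|U_1|$, $F_0\ge 1/C$ gives $|\bar A|\le C|U_1|/r^2$, $|\bar B|\le C|U_1|$, so $|w_2|\le r^4(|\bar A|r^2+|\bar B|)\le Cr^4|U_1|$, and $|D w_2|, |D^2 w_2|\le C|U_1|$ after accounting for the derivatives hitting $F_i-F_0$ (each $D_{y'}$ costs $1/r$ but each factor $F_i-F_0$ gains $r^2$, net loss $\le 1/r$; $D^2_{y'}$ on $F_i-F_0$ is bounded by $Cr^2/(r^4-y_n^2)$, which multiplied by the prefactor $(y_n^2-r^4)$ in $w_2$ is harmless). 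Summing, $w=w_1+w_2\in C^{1,1}(\overline{Q_{1.9r,r^2}})$ with the claimed bounds \eqref{est:w}.

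**The main obstacle.** The genuinely delicate point — and the place where one must invoke the mollification machinery of \cite{DYZ23} rather than argue naively — is controlling the \emph{second} derivatives of $w$ uniformly up to the boundary $\{|y_n|=r^2\}$, since $f,g$ are only $C^{1,1}$ and the ``natural'' coefficient $A(y')$ inherits a spurious $(r^4-y_n^2)^{-1}$-type singularity through $F_0$. The whole point of splitting $w=w_1+w_2$ with $w_1$ built from $F_0$ (which only involves $f-g$, whose second derivative is a bounded measure, so that after the $\kappa$-mollification baked into $F_1,F_2$ the difference $F_i - F_0$ and its derivatives are controlled) and with $w_2$ carrying the compensating vanishing factor $(y_n^2-r^4)$, is precisely to arrange that every singular $(r^4-y_n^2)^{-1}$ is either absent (in $w_1$, because $A$ turns out to be honestly $C^{1,1}$ with $\|D^2_{y'}A\|\le C|U_1|/r^2$) or cancelled (in $w_2$). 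Verifying this cancellation carefully — i.e. that $\|D^2 w\|_{L^\infty(\overline{Q_{1.9r,r^2}})}\le C|U_1|/r$ — is the technical heart of the lemma, and I would deferring the routine but lengthy derivative computations, citing the parallel estimates in \cite{DYZ23}*{Section 3} where the same structure appears for the insulated problem.
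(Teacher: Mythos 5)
Your overall approach is the same as the paper's: subtract the boundary identities \eqref{eq:w} from \eqref{tilde_u_equation} to get the homogeneous Robin conditions for $v$, then estimate $w_1$ and $w_2$ separately using (i) bounds on $F_0$ and $A$, (ii) the smallness $|F_i-F_0|\le Cr^2$, and (iii) cancellation of the $(r^4-y_n^2)^{-1}$ singularities in $D^2 \overline{A}, D^2\overline{B}$ against the vanishing factor $(y_n^2-r^4)$ in $w_2$. The final bounds you reach match \eqref{est:w}.

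However, your exposition contains a real internal contradiction about $F_0$ that you should resolve. In item (i) you assert that $F_0$ ``enjoys the same bounds as $F_1,F_2$,'' including ``the singular second-derivative bounds inherited from the mollification,'' and again in (iii) you say $|D^2_{y'}A|$ ``carries a $1/(r^4-y_n^2)$ singularity from $D^2_{y'}F_0$.'' This is false, and if it were true, your estimate for $|D^2 w_1|$ would fail (the factor $|y_n|\le r^2$ does not cancel a $(r^4-y_n^2)^{-1}$ blow-up near $|y_n|=r^2$). The correct and essential structural fact — which you eventually state but without justifying, and which the paper records as \eqref{est:oF-1} — is that $F_0=\frac{1}{2r^2}(\varepsilon+f-g)$ involves \emph{no} mollification at all and depends only on $y'$, so since $f,g\in C^{1,1}$ one has the clean bound $|D^2_{y'}F_0|\le C/r^2$ on the whole cylinder, with no $(r^4-y_n^2)^{-1}$ denominator. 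That is precisely why the paper builds $w_1$ from $F_0$ rather than from $F_1$ or $F_2$: it keeps $A$ genuinely $C^{1,1}$ in $y'$ with $|D^2_{y'}A|\le C|U_1|/r^2$, hence $|D^2_{y'}w_1|=|D^2_{y'}A|\,|y_n|\le C|U_1|$ and the mixed term $|D_{y'}A|\le C|U_1|/r$ is the dominant one, giving $|D^2 w_1|\le C|U_1|/r$. (Your ``$\le C|U_1|$, even better than needed'' is a small slip: the mixed derivative $D_{y'}D_{y_n}w_1 = D_{y'}A$ contributes $C|U_1|/r$, matching but not beating the target.) I'd also correct the aside that $D^2(f-g)$ is ``a bounded measure'' — since $f,g\in C^{1,1}$, it is in $L^\infty$. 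With the role of $F_0$ stated cleanly, your proof is the paper's proof.
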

\begin{proof}
    By direct computations, using \eqref{fg_0}--\eqref{def:c_2}, we obtain
    \begin{equation*}
       |F_0|\le C,\quad |D_{y'}F_0|\le \frac{C}{r}, \quad |D^2_{y'}F_0|\le \frac{C}{r^2}, 
    \end{equation*}
and 
\begin{equation}\label{est:oF-1}
    |F_0^{-1}|\le C,\quad |D_{y'}F_0^{-1}|\le \frac{C}{r}, \quad |D^2_{y'}F_0^{-1}|\le \frac{C}{r^2}.
\end{equation}
Thus, $A=A(y')$ defined in \eqref{def:A} satisfies
\begin{equation}\label{est:A}
       |A|\le C\,|U_1|,\quad |D_{y'}A|\le \frac{C\,|U_1|}{r}, \quad |D^2_{y'}A|\le \frac{C\,|U_1|}{r^2}.
\end{equation}
Thus, by noting that $D_{y_n}^2 w_1=0$, we have
\begin{equation}\label{est:w1}
    |w_1|\le C\,r^2\,|U_1|,\quad |D_{y}w_1|\le C\,|U_1|, \quad |D^2_{y}w_1|\le \frac{C\,|U_1|}{r}.
\end{equation}
Note that for any $t\in\mathbb{R}$, it holds that
$$0\le \sqrt{1+t^2}-1\le \frac{t^2}{2}.$$
Therefore, by \eqref{fg_0}--\eqref{def:c_2}, and \eqref{F_bound}, we have
\begin{equation}\label{close_F}
    |F_i-F_0|\le C \,r^2, \quad i=1,2.
\end{equation}
Moreover, by using \eqref{fg_0}--\eqref{def:c_2}, and \eqref{h_derivatives}, we also have
\begin{equation}\label{close_DF}
    |D_{y'}F_i-D_{y'}F_0|\le C \,r, \quad i=1,2.
\end{equation}
By direct computations and using \eqref{est:oF-1}, \eqref{est:A}, \eqref{close_F}, and \eqref{close_DF}, we have the following estimates for $\overline{A}$:
\begin{equation}\label{est:oA}
    |\overline{A}|\le C\, r^{-2}\,|U_1|,\quad |D_{y'}\overline{A}|\le C\,r^{-3}\,|U_1|,\quad |D_{y_n}\overline{A}|\le C\,r^{-2}\,|U_1|,
\end{equation}
\begin{equation}\label{est:oA2}
    |D^2_{y'}\overline{A}|\le \frac{C\,|U_1|}{r^2(r^4-y_n^2)},\quad |D_{y_n}D_{y'}\overline{A}|\le \frac{C\,|U_1|}{r (r^4-y_n^2)}, \quad |D^2_{y_n}\overline{A}|\le \frac{C\,|U_1|}{r^4-y_n^2}.
\end{equation}
Similarly, we have
\begin{equation}\label{est:oB}
    |\overline{B}|\le C\, |U_1|,\quad |D_{y'}\overline{B}|\le C\,r^{-1}\,|U_1|,\quad |D_{y_n}\overline{B}|\le C\,|U_1|,
\end{equation}
\begin{equation}\label{est:oB2}
    |D^2_{y'}\overline{B}|\le \frac{C\,|U_1|}{r^4-y_n^2},\quad |D_{y_n}D_{y'}\overline{B}|\le \frac{C\,r\,|U_1|}{r^4-y_n^2}, \quad |D^2_{y_n}\overline{B}|\le \frac{C\,r^2\,|U_1|}{r^4-y_n^2}.
\end{equation}
By \eqref{est:oA}--\eqref{est:oB2} and direct computations, one can deduce that
\begin{equation}\label{est:w2}
    |w_2|\le C\,r^4\,|U_1|,\quad |D_{y}w_2|\le C\,r^2\,|U_1|, \quad |D^2_{y}w_2|\le C\,|U_1|.
\end{equation}
By \eqref{est:w1} and \eqref{est:w2}, we have $w=w_1+w_2\in C^{1,1}(\overline{Q_{1.9r, r^2}})$, satisfying \eqref{est:w}. Thus by \eqref{tilde_u_equation} and \eqref{eq:w}, $v$ is a solution to \eqref{v_equation}.
\end{proof}

 \textbf{Step 3: Reducing to the homogeneous Neumann boundary condition.}

We make another transformation $\tilde v= v \, e^{\psi}$, where $\psi=\psi(y)$ is a function to be determined, such that $\tilde v$ satisfies the homogeneous Neumann boundary conditions. 
Note that by the chain rule, 
$$D_{y_n}\tilde v=e^{\psi}(D_{y_n}v+v\,D_{y_n} \psi).$$
Thus if we choose $$\psi=(y_n+r^2)(y_n-r^2)(\tilde{A}\,y_n+\tilde{B}),$$ where $\tilde A$ and $\tilde B$ are functions such that
\begin{equation*}
\left\{
\begin{aligned}
&D_{y_n}\psi(y',r^2)=2r^2(\tilde A r^2+\tilde B)=\gamma^{-1} F_1,\\
&D_{y_n}\psi(y',-r^2)=-2r^2(-\tilde A r^2+\tilde B)=-\gamma^{-1} F_2,
\end{aligned}
\right.
\end{equation*}
namely, 
$$\tilde A:=(4\gamma r^4)^{-1}(F_1-F_2) \quad \mbox{and}\quad \tilde B:=(4\gamma r^2)^{-1}(F_1+F_2).$$
then by \eqref{v_equation}$, \tilde v:= v \, e^{\psi}:=v\, e^{(y_n+r^2)(y_n-r^2)(\tilde{A}\,y_n+\tilde{B})}$ satisfies 
\begin{equation}\label{v_boundary}
D_{y_n}\tilde v=0\quad\mbox{on } \{y_n=\pm r^2\}\cap \overline{Q_{1.9r, r^2}}.
\end{equation}
We have the following lemma.
\begin{lemma}
    Let $\psi$, $\tilde v$ be defined as above and $a$ be defined as in Lemma \ref{Change_of_variable_lemma}. Then $\tilde v$ is a solution to
\begin{equation}\label{v_tilde_equation}
\left\{
\begin{aligned}
{a}^{ij}D_{y_iy_j} \tilde v+\tilde{b}^i D_{y_i} \tilde v+\tilde c \,\tilde v&= \tilde h&&\mbox{in }Q_{1.9r, r^2},\\
D_{y_n}\tilde v&=0 &&\mbox{on } \{y_n=\pm r^2\}\cap \overline{Q_{1.9r, r^2}},
\end{aligned}
\right.
\end{equation}
where $\tilde b:=(\tilde b^i)_{n\times 1}\in L^\infty(\overline{Q_{1.9r, r^2}}:\bR^{n})$, and $ \tilde c,\, \tilde h\in L^\infty(\overline{Q_{1.9r, r^2}})$ satisfy
\begin{equation}\label{est:coeff}
    |\tilde b|\le \frac{C}{r},\quad |\tilde c|\le \frac{C}{r^2},\quad |\tilde h|\le \frac{C\,|U_1|}{r}.
\end{equation} 
\end{lemma}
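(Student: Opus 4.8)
The plan is to derive \eqref{v_tilde_equation} by substituting $v = \tilde v\, e^{-\psi}$ into the equation \eqref{v_equation} for $v$ furnished by Lemma~\ref{lem:minus}. Differentiating,
\[
D_{y_i} v = e^{-\psi}\bigl(D_{y_i}\tilde v - \tilde v\, D_{y_i}\psi\bigr),
\]
\[
D_{y_iy_j} v = e^{-\psi}\bigl(D_{y_iy_j}\tilde v - D_{y_i}\tilde v\, D_{y_j}\psi - D_{y_j}\tilde v\, D_{y_i}\psi - \tilde v\, D_{y_iy_j}\psi + \tilde v\, D_{y_i}\psi\, D_{y_j}\psi\bigr).
\]
Plugging these into $a^{ij}D_{y_iy_j}v + b^iD_{y_i}v = -a^{ij}D_{y_iy_j}w - b^iD_{y_i}w$, using that $a = (D\Phi)^{-1}((D\Phi)^{-1})^T$ is symmetric, and multiplying through by $e^{\psi}$, one obtains \eqref{v_tilde_equation} with the explicit coefficients
\[
\tilde b^i := b^i - 2a^{ij}D_{y_j}\psi,\quad
\tilde c := -a^{ij}D_{y_iy_j}\psi + a^{ij}D_{y_i}\psi\, D_{y_j}\psi - b^iD_{y_i}\psi,\quad
\tilde h := e^{\psi}\bigl(-a^{ij}D_{y_iy_j}w - b^iD_{y_i}w\bigr).
\]

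The homogeneous Neumann condition is built into the choice of $\psi$. Indeed, on $\{y_n = r^2\}$ the very definition of $\tilde A$ and $\tilde B$ gives $D_{y_n}\psi = \gamma^{-1}F_1$, so
\[
D_{y_n}\tilde v = e^{\psi}\bigl(D_{y_n}v + v\, D_{y_n}\psi\bigr) = \gamma^{-1}F_1\, e^{\psi}\bigl(v + \gamma F_1^{-1}D_{y_n}v\bigr) = 0
\]
by the boundary condition in \eqref{v_equation}; the face $\{y_n = -r^2\}$ is handled identically, using $D_{y_n}\psi = -\gamma^{-1}F_2$ there. This confirms \eqref{v_boundary} and the second line of \eqref{v_tilde_equation}.

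It remains to establish \eqref{est:coeff}. The crux is to show that $|\psi| \le C r^2$, $|D_y\psi| \le C/r$, and $|D^2_y\psi| \le C/r^2$ on $\overline{Q_{1.9r,r^2}}$; granting this, \eqref{est:coeff} follows at once by combining with \eqref{est:a} (bounds on $a$ and $b$), \eqref{est:w} (bounds on $w$), and $e^{\pm\psi}\le C$ (valid since $|\psi|\le Cr^2\le C$). To prove the bounds on $\psi = (y_n^2-r^4)(\tilde A y_n + \tilde B)$, I will first record estimates, up to second order, for $\tilde A = (4\gamma r^4)^{-1}(F_1-F_2)$ and $\tilde B = (4\gamma r^2)^{-1}(F_1+F_2)$: the closeness estimates \eqref{close_F}--\eqref{close_DF} yield $|F_1-F_2|\le Cr^2$ and $|D_{y'}F_1 - D_{y'}F_2|\le Cr$, while \eqref{F_bound} controls $|F_1+F_2|$, $|D_{y_n}F_i|$, and the second derivatives of the $F_i$. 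This gives, e.g., $|\tilde A|\le Cr^{-2}$, $|D_{y'}\tilde A|\le Cr^{-3}$, $|D_{y_n}\tilde A|\le Cr^{-2}$, $|D^2_{y'}\tilde A|\le C r^{-2}/(r^4-y_n^2)$, and similar (somewhat less singular) bounds for $\tilde B$. Inserting these, together with $|y_n^2 - r^4|\le r^4$, $|D_{y_n}(y_n^2-r^4)|\le 2r^2$, into the Leibniz expansion of $\psi$ and its first and second derivatives produces the three displayed bounds.

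The main obstacle is precisely this last computation: since each $F_i$ depends on $y_n$ through the mollification scale $\kappa = (r^4-y_n^2)/r$, the second-derivative bounds in \eqref{F_bound}, and hence those for $D^2_y\tilde A$, $D^2_y\tilde B$, blow up like $(r^4-y_n^2)^{-1}$ near $\{y_n = \pm r^2\}$. One must check that every such singular contribution to $D^2_y\psi$ is hit by the vanishing prefactor $y_n^2-r^4$ (or by $D_{y_n}(y_n^2-r^4)$ paired with a less singular factor), so that $D^2_y\psi$ stays bounded by $C/r^2$, and that the powers of $r$ in the mixed $y'$--$y_n$ terms match up. This bookkeeping is entirely parallel to the estimates for $\overline A$, $\overline B$, $w_2$ already carried out in the proof of Lemma~\ref{lem:minus}, so no genuinely new difficulty arises. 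Finally, the regularity claims $\tilde b\in L^\infty$ and $\tilde c,\tilde h\in L^\infty$ follow from $a\in C^{0,1}$, $b\in L^\infty$, $w\in C^{1,1}$ (Lemmas~\ref{Change_of_variable_lemma}--\ref{lem:minus}), together with $\psi\in C^{0,1}(\overline{Q_{1.9r,r^2}})$ and $D^2_y\psi\in L^\infty(Q_{1.9r,r^2})$ inherited from the regularity of the $F_i$.
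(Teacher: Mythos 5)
Your argument matches the paper's proof: you substitute $v=\tilde v e^{-\psi}$ into \eqref{v_equation}, obtain the same explicit formulas for $\tilde b^i$, $\tilde c$, $\tilde h$, verify the Neumann condition from the definition of $\psi$, and deduce \eqref{est:coeff} by bounding $\psi$ and its derivatives via \eqref{F_bound}, \eqref{close_F}, \eqref{close_DF}, together with the cancellation of the singular $(r^4-y_n^2)^{-1}$ factor against the prefactor $y_n^2-r^4$ in $\psi$. The only (harmless) imprecision is that your claimed bound $|D_y\psi|\le C/r$ is not sharp --- the paper's \eqref{est:psi} records $|D_y\psi|\le C$, which is what your outlined computation actually yields --- but the weaker bound still gives $|\tilde b|\le C/r$ and $|\tilde c|\le C/r^2$, so the conclusion stands.
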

\begin{proof}
    By \eqref{F_bound} and direct computations, we have $\psi\in C^{1,1}(\overline{Q_{1.9r, r^2}})$, satisfying
    \begin{equation}\label{est:psi}
     |\psi|\le C \,r^2, \quad |D_y\psi|\le C, \quad |D^2_{y} \psi|\le C \, r^{-2}. 
    \end{equation}
For $i,j\in\{1,\ldots,n\}$, we define
\begin{align*}
    & \tilde b^i=b^i-2\sum_{j=1}^n a^{ij}D_{y_j}\psi, \quad \tilde c=a^{ij}(D_{y_i}\psi \,D_{y_j}\psi-D_{y_i y_j}\psi)-b^iD_{y_i}\psi, \\ 
    &\tilde h=(- a^{ij}D_{y_iy_j} w-b^i D_{y_i} w)\,e^\psi,
\end{align*}
where $b^{i}$ and $w$ are the same as in Lemmas \ref{Change_of_variable_lemma} and \ref{lem:minus}, respectively.
Then \eqref{est:coeff} follows \eqref{est:a}, \eqref{est:w}, and \eqref{est:psi}. Thus by \eqref{v_equation}, \eqref{v_boundary}, and the chain rule, $\tilde v$ is a solution to \eqref{v_tilde_equation}.
\end{proof}

\textbf{Step 4: Extension and estimates.}

Next we extend $\tilde v$, $a$, $\tilde b$, $\tilde c$, and $\tilde h$ to the larger cylinder $\mathcal{Q}_{1.9r}:=\{(y',y_n)\in \mathbb{R}^n: \,|y'-x_0'|<1.9r\}$.
For $i,j = 1,2,\ldots, n-1$, we take the even extension of $ a^{ij}$, $ a^{nn}$, $\tilde b^i$, $\tilde c$, $\tilde h$, and $\tilde v$ with respect to $\{y_n=r^2\}$, and take the odd extension of $ a^{in}$, $ a^{ni}$, and $b^n$ with respect to $\{y_n=r^2\}$. Then we take the periodic extension of these functions in the $y_n$ axis, so that the period is equal to $4r^2$. We still denote these functions by $\tilde v$, $a$, $\tilde b$, $\tilde c$, and $\tilde h$ after the extension. Then because of the Neumann boundary condition, $\tilde v$ is a solution of the following equation
\begin{equation}\label{eq:final}
a^{ij}D_{y_iy_j} \tilde v+\tilde{b}^i D_{y_i} \tilde v+\tilde c \,\tilde v= \tilde h \quad \mbox{in }\mathcal{Q}_{1.9r}.
\end{equation}
Note that because of \eqref{ortho}, after the extension, we have $a\in C^{0,1}(\overline{\mathcal{Q}_{1.9r}}:\bR^{n\times n})$ satisfies
\begin{equation}\label{lip-coeff}
      \frac{I}{C} \le a \le C I,\quad  |D_y a|\le C.
\end{equation}
Thus we have the following gradient estimate for $\tilde v$.
\begin{lemma}
    Let $\tilde v$ be a solution of \eqref{eq:final}. Then it holds that
    \begin{equation}\label{estimate:tildev}
     \|D_y \tilde v\|_{L^\infty(Q_{1.1r, 1.1r})}+r^{-1}\|\tilde v\|_{L^\infty(Q_{1.1r, 1.1r})}\le C\big(r^{-1}\dashnorm{\tilde v}{L^2(Q_{1.9r, 1.9r})}+|U_1|\big).
    \end{equation}
\end{lemma}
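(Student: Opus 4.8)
The coefficients of \eqref{eq:final} are singular in $r$ --- the lower-order coefficients blow up like $r^{-1}$ and $r^{-2}$ and $\tilde h$ like $r^{-1}|U_1|$ --- so the plan is to rescale the cylinder $\mathcal{Q}_{1.9r}$ to unit size, check that the rescaled equation has uniformly good structure, and then invoke the classical interior elliptic estimates on a fixed domain. Set $z'=(y'-x_0')/r$, $z_n=y_n/r$, and define $V(z):=\tilde v(x_0'+rz',\,rz_n)$, which (since \eqref{eq:final} holds for all $y_n$ after the periodic extension) solves, after multiplication by $r^2$,
\begin{equation*}
\bar a^{ij}D_{z_iz_j}V+r\,\bar b^i D_{z_i}V+r^2\bar c\,V=r^2\bar h\qquad\text{on }\mathcal{C}:=\{|z'|<1.9\}\times\bR,
\end{equation*}
where $\bar a,\bar b,\bar c,\bar h$ denote the coefficients of \eqref{eq:final} composed with the scaling map. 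By \eqref{lip-coeff} and \eqref{est:coeff} these satisfy $\frac{1}{C} I\le\bar a\le CI$, $|D_z\bar a|\le Cr\le C$, $|r\,\bar b|\le C$, $|r^2\bar c|\le C$ and, crucially keeping the power of $r$,
\begin{equation*}
\|r^2\bar h\|_{L^\infty(\mathcal{C})}\le r^2\cdot\frac{C|U_1|}{r}=Cr\,|U_1|,
\end{equation*}
all with $C$ independent of $r$ and $\varepsilon$. Thus $V$ solves a uniformly elliptic equation with uniformly Lipschitz leading coefficients, uniformly bounded lower-order coefficients, and right-hand side bounded by $Cr|U_1|$, on a fixed domain.

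On this rescaled problem I would apply the standard interior estimates. Since $\bar a$ is Lipschitz, the equation can be put in divergence form, $D_{z_i}(\bar a^{ij}D_{z_j}V)+(r\bar b^i-D_{z_j}\bar a^{ij})D_{z_i}V+r^2\bar c\,V=r^2\bar h$, with uniform structural constants. First, the De Giorgi--Nash--Moser local boundedness estimate (applied to $\pm V$) gives, for any $z_0$ with $|z_0'|\le1.1$ and $|z_{0,n}|\le1.1$ --- so that $B_{3/4}(z_0)\subset\mathcal{E}:=\{|z'|<1.9,\ |z_n|<1.9\}\subset\mathcal{C}$ --- and any fixed $q>n/2$,
\begin{equation*}
\|V\|_{L^\infty(B_{1/2}(z_0))}\le C\big(\dashnorm{V}{L^2(B_{3/4}(z_0))}+\|r^2\bar h\|_{L^{q}(B_{3/4}(z_0))}\big)\le C\big(\dashnorm{V}{L^2(\mathcal{E})}+r|U_1|\big),
\end{equation*}
where the last inequality uses $B_{3/4}(z_0)\subset\mathcal{E}$ and the comparability of the measures of $B_{3/4}$ and $\mathcal{E}$. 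Letting $z_0$ be any given point of $\{|z'|\le1.1,\,|z_n|\le1.1\}$ yields the $L^\infty$ bound for $V$ on that set. Next, the interior $W^{2,p}$ estimates with a fixed $p>n$ (obtained from the Calder\'on--Zygmund estimate after a finite bootstrap to absorb the bounded lower-order terms, using the uniform Lipschitz bound on $\bar a$) together with the Sobolev embedding $W^{2,p}\hookrightarrow C^{1,\beta}$ give $\|V\|_{C^{1,\beta}(B_{1/4}(z_0))}\le C(\|V\|_{L^\infty(B_{1/2}(z_0))}+\|r^2\bar h\|_{L^\infty(B_{1/2}(z_0))})\le C(\dashnorm{V}{L^2(\mathcal{E})}+r|U_1|)$; again letting $z_0$ range over $\{|z'|\le1.1,|z_n|\le1.1\}$,
\begin{equation*}
\|D_zV\|_{L^\infty(\{|z'|<1.1,|z_n|<1.1\})}+\|V\|_{L^\infty(\{|z'|<1.1,|z_n|<1.1\})}\le C\big(\dashnorm{V}{L^2(\mathcal{E})}+r|U_1|\big).
\end{equation*}

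Finally, I would undo the scaling: $D_{y_i}\tilde v(y)=r^{-1}(D_{z_i}V)((y'-x_0')/r,\,y_n/r)$, so $\|D_y\tilde v\|_{L^\infty(Q_{1.1r,1.1r})}=r^{-1}\|D_zV\|_{L^\infty(\{|z'|<1.1,|z_n|<1.1\})}$ and $\|\tilde v\|_{L^\infty(Q_{1.1r,1.1r})}=\|V\|_{L^\infty(\{|z'|<1.1,|z_n|<1.1\})}$, while averages are scale invariant, so $\dashnorm{\tilde v}{L^2(Q_{1.9r,1.9r})}=\dashnorm{V}{L^2(\mathcal{E})}$. Multiplying the last display by $r^{-1}$ gives exactly \eqref{estimate:tildev}, with $C$ depending only on $n$, $\gamma$, $c_1$, $c_2$.

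This is the routine ``rescale to a fixed scale and quote interior estimates'' argument, and the points requiring care are bookkeeping ones rather than a genuine obstacle. The main subtlety is tracking the powers of $r$: it is precisely the scalings $|\tilde b|\le C/r$, $|\tilde c|\le C/r^2$, $|\tilde h|\le C|U_1|/r$ from \eqref{est:coeff} --- which themselves come from the $r$-adapted constructions of $w$ and $\psi$ in Steps~2--3 --- that make the rescaled lower-order coefficients uniformly bounded, and it is the extra factor $r$ in $\|r^2\bar h\|_{L^\infty}\le Cr|U_1|$ that, after dividing by $r$ at the end, produces $|U_1|$ rather than $|U_1|/r$ on the right-hand side of \eqref{estimate:tildev}. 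A secondary point is that the leading coefficients are only Lipschitz, so one uses the divergence form for the De Giorgi--Nash--Moser step and the non-divergence form for the $W^{2,p}$ step; Lipschitz (hence continuous) leading coefficients are admissible in both, and after rescaling all structural constants are $r$- and $\varepsilon$-independent, so the final constant is $\varepsilon$-independent.
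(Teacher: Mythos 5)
Your argument is correct and follows essentially the same route as the paper's: rescale all variables by $r$ to a fixed-size cylinder, verify that the rescaled equation has uniformly elliptic Lipschitz leading coefficients, bounded lower-order coefficients, and right-hand side of size $Cr|U_1|$, then invoke interior elliptic estimates and scale back, tracking the single factor of $r$ that converts $|U_1|/r$ in $\tilde h$ into $|U_1|$ in the final bound. The only cosmetic difference is the intermediate step from $\dashnorm{V}{L^2}$ to $\|V\|_{L^\infty}$: you pass through the divergence form and De Giorgi--Nash--Moser, while the paper iterates the $W^{2,p}$ estimate (GT Theorem 9.11) with Sobolev embedding between nested cylinders; both are standard and yield the same conclusion with $\varepsilon$- and $r$-independent constants.
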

\begin{proof}
Observe that after the extensions defined as above, $\tilde b$, $\tilde c$ and $\tilde H$ are bounded in $\mathcal{Q}_{1.9r}$, still satisfying
\eqref{est:coeff}.
We consider the following rescaling of Eq. \eqref{eq:final}:
$$
\begin{aligned}
&\underline{v}(y)=\tilde v(x_0'+ry',ry_n), \quad \underline{a}(y)=a(x_0'+ry',ry_n), \quad \underline{b}(y)=r\,\tilde b(x_0'+ry',ry_n),\\
&\underline{c}(y)=r^2\,\tilde c(x_0'+ry',ry_n), \quad \underline{h}(y)=r^2\,\tilde h(x_0'+ry',ry_n).
\end{aligned}
$$
Then $\underline{v}$ is a solution of the following equation
$$
\underline{a}^{ij}D_{y_iy_j} \underline v+\underline{b}^i D_{y_i} \underline v+\underline c \,\underline v= \underline h \quad \mbox{in }\mathcal{C}_{1.9}.
$$
where $\mathcal{C}_{1.9}:=\{y=(y',y_n)\in\mathbb{R}^n:\, |y'|<1.9\}$.
By  \eqref{est:coeff} and \eqref{lip-coeff}, we know that $\underline a$ is Lipschitz continuous in $\mathcal{C}_{1,9}$, and  $\underline b$, $\underline c$, $\underline h$ are bounded in $\mathcal{C}_{1,9}$,  satisfying
\begin{equation*}
   \frac{I}{C} \le \underline{a} \le C I,\quad  |D_y\underline a|+|\underline b|+|\underline c|\le C, \quad |\underline h|\le C\,r\,|U_1|.
\end{equation*}
Then by the classical $W^{2,p}$ estimate (c.f. \cite{GT}*{Theorem 9.11}) and a standard iteration argument, for any $p>1$, it holds that 
\begin{equation}\label{schauder1}
    \| \underline v\|_{W^{2,p}(\mathcal{Q}_{1.1})}\le C_1(p)\big(\|\underline v\|_{L^p(\mathcal{Q}_{1.5})}+\|\underline h\|_{L^{p}(\mathcal{Q}_{1.5})}\big)\le  C_2(p)\big(\|\underline v\|_{L^2(\mathcal{Q}_{1.9})}+\|\underline h\|_{L^{p}(\mathcal{Q}_{1.9})}\big).
\end{equation}
Here $C_1(p)$, $C_2(p)$ are two constants depending only on $n$, $c_1$, $c_2$, $\gamma$, and $p$.
We fix some $p>n$ and take $\theta=1-n/p$. Then by using the Sobolev embedding
$W^{2,p}(\mathcal{Q}_{1.9})\subset C^{1,\theta}(\mathcal{Q}_{1.9})$, the estimate \eqref{schauder1} also implies 
\begin{equation}\label{schauder2}
    \|\underline v\|_{L^{\infty}(\mathcal{Q}_{1.1})}+ \|D_y\underline v\|_{L^{\infty}(\mathcal{Q}_{1.1})}\le  C\big(\|\underline v\|_{L^2(\mathcal{Q}_{1.9})}+r\,|U_1|\big).
\end{equation}
Scaling back \eqref{schauder2} gives \eqref{estimate:tildev}. The lemma is proved.
\end{proof}

Now we are ready to prove Theorem \ref{thm-1/2}.
\begin{proof}[Proof of Theorem \ref{thm-1/2}.]
As is explained at the beginning of this section, we prove the theorem at $x=x_0$ and it suffices to prove \eqref{gradient-1/2} for the case when $R_0=1$, $U_2=-U_1$, and $r:=\frac{1}{4}(\varepsilon+|x_0'|^2)^{1/2}\le r_0$, where $r_0=r_0(n,c_1,c_2)$ is given in Lemma \ref{Change_of_variable_lemma}.
Since $u$ is a solution to \eqref{local_equation}, we define $\tilde u$, $w$, $v$, and $\tilde v$ as earlier in this section.
Since $\tilde v$ is even with respect to $\{y_n=r^2\}$ and periodic in $y_n$ with a period of $4r^2$, we know that \eqref{estimate:tildev} directly implies 
$$
\|D_y \tilde v\|_{L^\infty(Q_{1.1r, r^2})}+r^{-1}\|\tilde v\|_{L^\infty(Q_{1.1r, r^2})}\le C\big(r^{-1}\dashnorm{\tilde v}{L^2(Q_{1.9r, r^2})}+|U_1|\big).
$$   
Since $v=\tilde v e^{-\psi}$, the last inequality and \eqref{est:psi} also imply
\begin{equation}\label{est:v}
\begin{aligned}
&\|D_y v\|_{L^\infty(Q_{1.1r, r^2})}\le \|\tilde v \,D_ye^{-\psi}\|_{L^\infty(Q_{1.1r, r^2})}
+ \|D_y\tilde v \,e^{-\psi}\|_{L^\infty(Q_{1.1r, r^2})}\\
&\le C \|\tilde v\|_{L^\infty(Q_{1.1r, r^2})}+ 
C\big(r^{-1}\dashnorm{\tilde v}{L^2(Q_{1.9r, r^2})}+|U_1|\big)\\
&\le 
C\big(r^{-1}\dashnorm{v}{L^2(Q_{1.9r, r^2})}+|U_1|\big).
\end{aligned}
\end{equation}
Next, since $\tilde u=v+w$, from \eqref{est:v} we deduce that
\begin{equation}\label{est:tildeu}
\begin{aligned}
    &\|D_y \tilde u\|_{L^\infty(Q_{1.1r, r^2})}\le \| D_y v\|_{L^\infty(Q_{1.1r, r^2})}
+ \|D_y w\|_{L^\infty(Q_{1.1r, r^2})}\\
&\le 
C\big(r^{-1}\dashnorm{v}{L^2(Q_{1.9r, r^2})}+|U_1|\big)+\|D_y w\|_{L^\infty(Q_{1.1r, r^2})}\\
&\le C\big(r^{-1}\dashnorm{\tilde u}{L^2(Q_{1.9r, r^2})}+r^{-1} \|w\|_{L^\infty(Q_{1.9r, r^2})}+|U_1|\big)+\|D_y w\|_{L^\infty(Q_{1.1r, r^2})}\\
&\le C\big(r^{-1}\dashnorm{\tilde u}{L^2(Q_{1.9r, r^2})}+|U_1|\big).
\end{aligned}   
\end{equation}
Here we used \eqref{est:w} in the last line.

Finally, since $\tilde u(y)=u(\Phi(y))$ in $\overline{Q_{1.9r,r^2}}$, by using Lemma \ref{Change_of_variable_lemma} (a) and (b), from \eqref{est:tildeu} we obtain \eqref{gradient-1/2}. The proof is completed.
\end{proof}
Next, we give the proof of Corollary \ref{cor2}.
\begin{proof}[Proof of Corollary \ref{cor2}.]
By classical gradient estimates (c.f. \cite{MR3059278}*{Chapter 4}), it suffices to prove \eqref{gradient-bdd} for the case when $\varepsilon \in(0,R_0/4)$ and $x\in \Omega_{R_0/4}$. By symmetry, if $u$ is a solution of \eqref{eq1}, then $\hat{u}(x):=-u(x',-x_n)$ satisfies
\begin{equation*}
\begin{cases}
\Delta \hat u =0&\mbox{in}~\widetilde\Omega,\\
\hat u + \gamma \partial_\nu \hat u = -U_2 &\mbox{on}~\partial D_1,\quad
\hat u + \gamma \partial_\nu \hat u = -U_1 \quad \mbox{on}~\partial D_2,\\
\int_{\partial D_j} \partial_\nu \hat u \, d\sigma = 0& j=1,2,\\
\hat u=\varphi(x)&\mbox{on}~\partial\Omega.
\end{cases}
\end{equation*}
Then by the uniqueness of solutions (Lemma \ref{lemma_EU}), we know that $u\equiv \hat u$. In particular, we have 
\begin{equation}\label{U1+U2}
    U_1=-U_2
\end{equation}
and 
\begin{equation}\label{0@0}
    u(x',0)= 0 \quad \mbox{when } (x',0)\in \overline{\widetilde\Omega}.
\end{equation}
By Theorem \ref{thm-1/2}, \eqref{U1+U2}, and \eqref{bdd:U}, for any $x\in \Omega_{R_0/2}$ and $r=\frac{1}{4}(\varepsilon+|x'|^2)^{\frac{1}{2}}$, it holds that
\begin{equation}\label{gradient-1/2_1}
|Du(x)| \le  C  \big(r^{-1} \dashnorm{u}{L^2(\Omega_{x,r})}  + |U_1|\big)\le  C  \big(r^{-1} \|u\|_{L^\infty(\Omega_{x,r})}  + \|\varphi\|_{L^\infty(\partial \Omega)}\big).
\end{equation}
Using \eqref{u_C1_outside}, \eqref{gradient-1/2_1} directly implies 
\begin{equation}\label{gradient-1/2_2}
|Du(x)| \le  C \,r^{-1} \|\varphi\|_{L^\infty(\partial \Omega)}.
\end{equation}
By the mean value theorem and \eqref{0@0}, for any $x=(x', x_n)\in \Omega_{R_0/2}$, there exists $\xi\in\mathbb{R}$ between $0$ and $x_n$, such that
$$
u(x)=u(x)-u(x',0)=D_n u(x',\xi)\,x_n.
$$
The last equality together with \eqref{gradient-1/2_2}, \eqref{fg_0}, and \eqref{def:c_2} yields 
$$|u(x)|\le C \,r\,\|\varphi\|_{L^\infty(\partial \Omega)}$$
for any $x\in \Omega_{R_0/2}$.
Combining \eqref{gradient-1/2_1}, \eqref{height_final}, and the last inequality yields \eqref{gradient-bdd}. The proof is completed.
\end{proof}

\section{Proof of Theorem \ref{thm_polynomial_upperbound}}\label{sec_polybound}

In this section, we prove Theorem \ref{thm_polynomial_upperbound}.

\begin{proof}[Proof of Theorem \ref{thm_polynomial_upperbound}]
As in the proof of Theorem \ref{thm-1/2}, without loss of generality, we may assume that $U_1 = - U_2$. It suffices to prove the theorem with $\beta > 0$.
Let $s_0 \in (0,R_0]$ be an $\varepsilon$-independent constant to be determined throughout the proof. We only need to prove \eqref{grad_u_polyupperbound} for $x \in \Omega_{s_0}$. For $s \in [\sqrt{\varepsilon}, s_0)$, we denote 
$$\Gamma_s:= (\Gamma_+ \cup \Gamma_-) \cap \overline{\Omega}_s.$$
Define an auxiliary function 
$$
v(x) := \frac{U_1}{\varepsilon/2 + \gamma}x_n, \quad x \in \Omega_{2s_0}
$$
and $\tilde{u} := u - v$. Then $\Delta \tilde{u} = 0$ in $\Omega_{2s_0}$. On $\Gamma_+$, we have
\begin{align*}
\tilde{u} + \gamma \partial_\nu \tilde{u} &= U_1 - \frac{U_1}{\varepsilon/2 + \gamma} \left( \frac{\varepsilon}{2} + f(x') + \frac{\gamma}{\sqrt{1 + |\nabla f|^2}} \right)\\
&= - \frac{U_1}{\varepsilon/2 + \gamma} \left( f(x') + \gamma \Big( \frac{1}{\sqrt{1 + |\nabla f|^2}} - 1 \Big) \right).
\end{align*}
Similarly, on $\Gamma_-$, we have
\begin{align*}
\tilde{u} + \gamma \partial_\nu \tilde{u} &= U_2 + \frac{U_2}{\varepsilon/2 + \gamma} \left( -\frac{\varepsilon}{2} + g(x') - \frac{\gamma}{\sqrt{1 + |\nabla g|^2}} \right)\\
&=  \frac{U_2}{\varepsilon/2 + \gamma} \left( g(x') - \gamma \Big( \frac{1}{\sqrt{1 + |\nabla g|^2}} - 1 \Big) \right).
\end{align*}
We denote $h = \tilde{u} + \gamma \partial_\nu \tilde{u}$ on $\Gamma_{\pm}$. By \eqref{fg_0}, \eqref{def:c_2}, and $U_1=-U_2$, we have
\begin{equation}\label{h_estimate}
|h(x)| \le \frac{C}{\gamma}|U_1||x'|^2, \quad x \in \Gamma_\pm.
\end{equation}

Let $\eta$ be a smooth cut-off function in the $x'$ variable, so that $\eta = 1$ in $\Omega_s$, $\eta =0$ outside $\Omega_{2s}$, and $|\nabla \eta| < C/s$. Multiplying $\tilde{u} \eta^2$ on both sides of $\Delta \tilde{u} = 0$ in $\Omega_{R_0}$, and integrating by parts, we have 
$$
\int_{\Omega_s} |\nabla \tilde{u}|^2  + \frac{1}{\gamma} \int_{\Gamma_s} |\tilde{u}|^2 \le C \int_{\Omega_{2s}} |\tilde{u}|^2 |\nabla \eta|^2 + \frac{1}{\gamma} \int_{\Gamma_{2s}} |\tilde u h|,
$$
which implies
\begin{equation}\label{energy1}
\int_{\Omega_s} |\nabla \tilde{u}|^2  + \frac{1}{\gamma} \int_{\Gamma_s} |\tilde{u}|^2 \le \frac{C}{s^2} \int_{\Omega_{2s}} |\tilde{u}|^2  + \frac{C}{\gamma} \int_{\Gamma_{2s}} |h|^2.
\end{equation}
By the fundamental theorem of calculus, we have
\begin{equation*}
\tilde{u}(x) = \int_{-\varepsilon/2 + g(x')}^{x_n} \partial_n \tilde{u}(x', s)\, ds + \tilde{u}(x',-\varepsilon/2 + g(x') ) \quad \mbox{for}~ x \in \Omega_{2s},
\end{equation*}
which implies
\begin{equation}\label{energy2}
\int_{\Omega_{2s}} | \tilde{u}|^2 \le C s^4 \int_{\Omega_{2s}} |\partial_n \tilde{u}|^2 + C s^2 \int_{\Gamma_{2s}} |\tilde{u}|^2.
\end{equation}
Combining \eqref{h_estimate}, \eqref{energy1}, and \eqref{energy2}, we have
$$
\int_{\Omega_s} |\nabla \tilde{u}|^2  + \frac{1}{\gamma} \int_{\Gamma_s} |\tilde{u}|^2 \le Cs^2 \int_{\Omega_{2s}} |\partial_n \tilde{u}|^2 + C \int_{\Gamma_{2s}} |\tilde{u}|^2 + \frac{C}{\gamma^3}|U_1|^2s^{n+3},
$$
where $C$ is some positive constant independent of $\gamma$. Let $N$ be a large constant to be determined later. We can choose $\gamma_0$ and $s_0$ to be sufficiently small, such that $Cs_0^2 \le 1/N$ and $C\gamma_0 \le 1/N$, then for any $\gamma \in (0, \gamma_0)$,
\begin{equation}\label{energy3}
\int_{\Omega_s} |\nabla \tilde{u}|^2  +  \frac{1}{\gamma} \int_{\Gamma_s} |\tilde{u}|^2 \le \frac{1}{N} \left(\int_{\Omega_{2s}} |\nabla \tilde{u}|^2 +  \frac{1}{\gamma} \int_{\Gamma_{2s}} |\tilde{u}|^2 \right)  + C|U_1|^2s^{n+3},
\end{equation}
where $C$ is some positive constant that may depend on $\gamma$.
For $k \in \bN$ satisfying $2^{-k}s_0 \ge \sqrt{\varepsilon}$, we denote
$$
A_k := \int_{\Omega_{2^{-k}s_0}} |\nabla \tilde{u}|^2  +  \frac{1}{\gamma} \int_{\Gamma_{2^{-k}s_0}} |\tilde{u}|^2.
$$
Then \eqref{energy3} implies that 
\begin{equation}\label{A_k}
\begin{aligned}
A_k &\le \left(\frac{1}{N}\right)^k A_0 + C |U_1|^2 \sum_{j=1}^k \left(\frac{1}{N}\right)^{k-j} 2^{-j(n+3)} s_{0}^{n+3}\\
&\le \left(\frac{1}{N}\right)^k A_0 + C |U_1|^2 2^{-k(n+1)}\sum_{j=1}^k \left(\frac{2^{n+1}}{N}\right)^{k-j} s_{0}^{n+3}.
\end{aligned}
\end{equation}
By \eqref{energy1} with $s = s_0$, we have
\begin{equation}\label{A_0}
A_0 \le C (\|u\|_{L^\infty (\Omega_{2s_0})}^2 + |U_1|^2).
\end{equation} 
Now we choose $N \ge 2^{n+1 + 2\beta}$. Then by \eqref{A_k} and \eqref{A_0},
\begin{equation}\label{A_k2}
A_k \le C2^{-k(n+1 + 2\beta)}\|u\|_{L^\infty (\Omega_{2s_0})}^2 +  C2^{-k(n+1)}|U_1|^2.
\end{equation}
By \eqref{energy2} and \eqref{A_k2}, we have
\begin{align*}
\fint_{\Omega_{2^{-k+1}s_0} \setminus \Omega_{2^{-k}s_0}}  |\tilde{u}|^2 &\le C 2^{k(n+1)}  \int_{\Omega_{2^{-k+1}s_0}} |\tilde{u}|^2 \le C 2^{k(n-1)} A_{k-1}\\
 &\le C 2^{-(2+ 2\beta) k}\|u\|_{L^\infty (\Omega_{2s_0})}^2 + C 2^{-2k}|U_1|^2,
\end{align*}
which implies 
\begin{align*}
\dashnorm{u}{L^2(\Omega_{4s}\setminus\Omega_s)} &\le \dashnorm{\tilde{u}}{L^2(\Omega_{4s}\setminus\Omega_s)} + \dashnorm{v}{L^2(\Omega_{4s}\setminus\Omega_s)}\\
&\le C s^{1+\beta} \|u\|_{L^\infty (\Omega_{2s_0})} + Cs|U_1| \quad \mbox{for}~s \in [\sqrt{\varepsilon},s_0).
\end{align*}
Then by the estimate \eqref{gradient-1/2} with $U_1 = -U_2$, we have
\begin{align*}
| \nabla u(x)| \le C|x'|^\beta \|u\|_{L^\infty (\Omega_{2s_0})} + C|U_1|\quad \mbox{for}~x \in \Omega_{s_0}\setminus\Omega_{2\sqrt{\varepsilon}}.
\end{align*}
To obtain the estimate for $x \in \Omega_{2\sqrt{\varepsilon}}$, we choose $k$ so that $2^{-k-1}s_0 < 4\sqrt{\varepsilon} \le 2^{-k}s_0$. Then \eqref{energy2} with $s =2 \sqrt{\varepsilon}$ and \eqref{A_k2} imply
\begin{align*}
\dashnorm{u}{L^2(\Omega_{4\sqrt{\varepsilon}})} &\le \dashnorm{\tilde{u}}{L^2(\Omega_{4\sqrt{\varepsilon}})} + \dashnorm{v}{L^2(\Omega_{4\sqrt{\varepsilon}})} \\
 &\le C \varepsilon^{-\frac{n-1}{4}} A_{k-1}^{1/2} + C \varepsilon\,|U_1|  \le C \varepsilon^{(\beta+1)/2} \|u\|_{L^\infty (\Omega_{2s_0})} + C \varepsilon^{1/2}|U_1|,
\end{align*}
which gives, by the estimate \eqref{gradient-1/2} with $U_1 = -U_2$ again,
\begin{align*}
| \nabla u(x)| \le C\varepsilon^{\beta/2} \|u\|_{L^\infty (\Omega_{2s_0})} + C|U_1| \quad \mbox{for}~x \in \Omega_{2\sqrt{\varepsilon}}.
\end{align*}
This concludes the proof.
\end{proof}

\section{A dimensional reduction argument}\label{sec4}

In this section, we adapt a dimensional reduction argument in \cite{DLY} to reduce the equation \eqref{narrow_region} to $n-1$ dimensions, under the assumption that $f,g$ satisfy \eqref{fg_0} and \eqref{fg_2}. This argument plays an important role in proving Theorems \ref{thm_upperbound} and \ref{main_thm}.

The weak formulation of \eqref{narrow_region} is given by
$$
\int_{\Omega_{R_0}} \nabla u \cdot \nabla \varphi \, dx + \frac{1}{\gamma} \int_{\Gamma_\pm} u \varphi \, d\sigma = 0
$$
for all smooth $\varphi$ that vanishes on $\{|x'| = R_0\}$.
Flatten the boundaries by
\begin{equation}\label{x_to_y}
\left\{
\begin{aligned}
y' &= x' ,\\
y_n &= 2 \varepsilon \left( \frac{x_n - g(x') + \varepsilon/2}{\varepsilon + f(x') - g(x')} - \frac{1}{2} \right),
\end{aligned}\right.
\quad \forall (x',x_n) \in \Omega_{R_0},
\end{equation}
and let $v(y) = u(x)$. Note that
\begin{align*}
&\int_{\Gamma_+} u(x', x_n) \varphi(x', x_n) \, d\sigma \\
=& \int_{-R_0}^{R_0} u(x', \varepsilon/2+ f(x')) \varphi(x', \varepsilon/2+ f(x')) \sqrt{1+ |\nabla_{x'}f(x')|^2} \, dx'\\
=& \int_{-R_0}^{R_0} v(y', \varepsilon) \varphi(y', \varepsilon) \sqrt{1+ |\nabla_{y'}f(y')|^2} \, dy'.
\end{align*}
Similarly,
$$
\int_{\Gamma_-} u(x', x_n) \varphi(x', x_n) \, d\sigma =  \int_{-R_0}^{R_0} v(y', -\varepsilon) \varphi(y', -\varepsilon) \sqrt{1+ |\nabla_{y'}g(y')|^2} \, dy'.
$$
Therefore, $v$ satisfies
\begin{equation}\label{equation_v}
\left\{
\begin{aligned}
-\partial_i(a^{ij}(y) \partial_j v(y)) &=0 \quad \mbox{in } Q_{R_0, \varepsilon},\\
 2\varepsilon\gamma^{-1} \sqrt{1+ |\nabla_{y'}f|^2} v(y) + a^{nj}(y) \partial_j v(y) &= 0 \quad \mbox{on } \{y_n = \varepsilon\},\\
 2\varepsilon\gamma^{-1} \sqrt{1+ |\nabla_{y'}g|^2} v(y) - a^{nj}(y) \partial_j v(y) &= 0 \quad \mbox{on } \{y_n = -\varepsilon\},
\end{aligned}
\right.
\end{equation}
where the coefficient matrix $(a^{ij}(y))$ is given by
\begin{equation}\label{a_ij_formula}
\begin{aligned}
(a^{ij}(y)) =& \frac{ 2 \varepsilon(\partial_x y)(\partial_x y)^t}{\det (\partial_x y)} \\
=& \begin{pmatrix}
\varepsilon + \mu|y'|^2 &0 &\cdots &0 &a^{1n}\\
0 &\varepsilon + \mu|y'|^2 &\cdots &0 &a^{2n}\\
\vdots &\vdots &\ddots &\vdots &\vdots\\
0 &0 &\cdots &\varepsilon + \mu|y'|^2 &a^{n-1,n}\\
a^{n1} &a^{n2} &\cdots &a^{n,n-1} &  \frac{4\varepsilon^2 + \sum_{i=1}^{n-1}|a^{in}|^2}{\varepsilon + f(y') - g(y')}
\end{pmatrix}\\
&+
\begin{pmatrix}
e^1 &0 &\cdots &0 &0\\
0 &e^2 &\cdots &0 &0\\
\vdots &\vdots &\ddots &\vdots &\vdots\\
0 &0 &\cdots & e^{n-1} &0\\
0 &0 &\cdots & 0 &0
\end{pmatrix},
\end{aligned}
\end{equation}

\begin{align*}
a^{ni} = a^{in} &= -2\varepsilon \partial_i g(y') - (y_n + \varepsilon)\partial_i(f(y') - g(y'));\\
e^{i} &= f(y')-g(y')-\mu|y'|^2 \quad \mbox{for}~i= 1,2,\ldots, n -1,
\end{align*}
and
\begin{equation}\label{Q_s_t}
Q_{s,t}:= \{ y = (y',y_n) \in \bR^n ~\big|~  |y'| < s,  |y_n| < t\}.
\end{equation}
Note that there is an extra $2\varepsilon$ factor appearing in the boundary condition of \eqref{equation_v}. This is due to the extra $2\varepsilon$ factor of $(a^{ij}(y))$ defined in \eqref{a_ij_formula}.
We define
\begin{equation}
\label{v_bar_def}
\bar{v}(y') := \fint_{-\varepsilon}^\varepsilon v(y',y_n)\, dy_n.
\end{equation}
By \eqref{equation_v}, $\bar{v}$ satisfies
\begin{equation*}
\begin{aligned}
&\dv((\varepsilon+\mu|y'|^2)\nabla \bar v)\\
=& - \frac{1}{2\varepsilon}\sum_{j=1}^{n} (a^{nj} \partial_j v)(y',\varepsilon) + \frac{1}{2\varepsilon}\sum_{j=1}^{n} (a^{nj} \partial_j v)(y',-\varepsilon) \\
&-\sum_{i=1}^{n-1}\partial_i(\overline{a^{in}\partial_n v}) - \sum_{i=1}^{n-1} \partial_i(e^i \partial_i \bar v)\\
=& \gamma^{-1}\sqrt{1+ |\nabla_{y'}f|^2} v(y', \varepsilon)+ \gamma^{-1}\sqrt{1+ |\nabla_{y'}g|^2} v(y', -\varepsilon)-\sum_{i=1}^{n-1}\partial_i(\overline{a^{in}\partial_n v}+e^i \partial_i \Bar{v}) \\
=& 2\gamma^{-1} \bar{v}(y')    \underbrace{-\sum_{i=1}^{n-1}\partial_i(\overline{a^{in}\partial_n v}+e^i \partial_i \Bar{v})}_{:= \dv F}\\
&+\underbrace{\gamma^{-1}\sqrt{1+ |\nabla_{y'}f|^2} v(y', \varepsilon) + \gamma^{-1}\sqrt{1+ |\nabla_{y'}g|^2} v(y', -\varepsilon) - 2\gamma^{-1} \bar{v}(y')}_{:= G},
\end{aligned}
\end{equation*}
where $\overline{a^{in}\partial_n v}$ denotes the vertical average of $a^{in}\partial_n v$ with respect to $y_n \in (-\varepsilon, \varepsilon)$. We will adapt this notation throughout this article.

Summarizing this section, we have shown the following lemma.

\begin{lemma}\label{lemma_reduction}
Let $f,g$ satisfy \eqref{fg_0} and \eqref{fg_2}, $u \in H^1(\Omega_{R_0})$ be a solution of \eqref{narrow_region}. Perform the change of variables \eqref{x_to_y}, and let $v(y) = u(x)$, $\bar{v}$ be defined as in \eqref{v_bar_def}. Then $\bar{v}$ satisfies $\bar{v}(0) = 0$ and the equation
\begin{equation}\label{equation_v_bar}
\dv((\varepsilon+\mu|y'|^2)\nabla \bar v) - 2\gamma^{-1} \bar{v} = \dv F + G \quad \mbox{in}~B_{R_0} \subset \bR^{n-1},
\end{equation}
where
\begin{equation}\label{FG_bound}
\begin{aligned}
|F(y')| &\le C\Big(\varepsilon|y'| |\overline{\partial_n v}| + |e^1| |\nabla_{y'}\overline{v}|\Big), \\
 |G(y')| &\le C\Big(\varepsilon \max_{y_n \in (-\varepsilon, \varepsilon)} |\partial_n v| + |y'|^2 |\overline{v}| \Big),
\end{aligned}
\end{equation}
and $C$ is some positive constant depending only on $n$, $\gamma$, $\|f\|_{C^{2,\sigma}}$, and $\|g\|_{C^{2,\sigma}}$.
\end{lemma}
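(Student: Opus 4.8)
Essentially all of the computation needed is carried out in the body of this section; the proof consists in organizing it and adding the error bounds \eqref{FG_bound} together with the normalization $\bar v(0')=0$. \textbf{Step 1 (transform the equation).} Starting from the weak formulation of \eqref{narrow_region} and applying the flattening map \eqref{x_to_y}, which keeps $y'=x'$ and is affine in $x_n$ with slope $2\varepsilon/(\varepsilon+f(x')-g(x'))$, one computes $(\partial_x y)(\partial_x y)^t/\det(\partial_x y)$ to obtain the coefficient matrix in \eqref{a_ij_formula} after clearing the harmless overall factor $2\varepsilon$; the two surface integrals over $\Gamma_\pm$ become integrals over $\{y_n=\pm\varepsilon\}$ with the weights $\sqrt{1+|\nabla_{y'}f|^2}$ and $\sqrt{1+|\nabla_{y'}g|^2}$, which explains the extra $2\varepsilon$ in the Robin conditions of \eqref{equation_v}. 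The off-diagonal entries $a^{in}=a^{ni}$ and the diagonal perturbation $e^i=f-g-\mu|y'|^2$ are read off from this computation.

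\textbf{Step 2 (vertical averaging).} Integrating \eqref{equation_v}$_1$ over $y_n\in(-\varepsilon,\varepsilon)$ and dividing by $2\varepsilon$, the term $\partial_n(a^{nj}\partial_j v)$ becomes the flux difference at $y_n=\pm\varepsilon$, which by \eqref{equation_v}$_2$--\eqref{equation_v}$_3$ equals $\gamma^{-1}(\sqrt{1+|\nabla_{y'}f|^2}\,v(y',\varepsilon)+\sqrt{1+|\nabla_{y'}g|^2}\,v(y',-\varepsilon))$; writing this as $2\gamma^{-1}\bar v+G$ defines $G$. The horizontal part of the divergence splits into the leading piece $\dv((\varepsilon+\mu|y'|^2)\nabla\bar v)$, the cross terms $\sum_{i=1}^{n-1}\partial_i(\overline{a^{in}\partial_n v})$, and the diagonal correction $\sum_{i=1}^{n-1}\partial_i(e^i\partial_i\bar v)$; collecting the last two sums into $-\dv F$ gives \eqref{equation_v_bar}.

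\textbf{Step 3 (error bounds and normalization).} This is where \eqref{fg_2} and $f,g\in C^{2,\sigma}$ enter. From $\nabla_{y'}f(0')=\nabla_{y'}g(0')=0$ and $f,g\in C^{1,1}$ one gets $|\nabla_{y'}f|,|\nabla_{y'}g|\le C|y'|$, hence $|a^{in}|\le C\varepsilon|y'|$ and $0\le\sqrt{1+|\nabla_{y'}f|^2}-1\le C|y'|^2$, while \eqref{fg_2} gives $|e^i|\le C|y'|^{2+\sigma}$. The fundamental theorem of calculus yields $|v(y',\pm\varepsilon)-\bar v(y')|\le C\varepsilon\max_{|y_n|<\varepsilon}|\partial_n v|$, and one integration by parts in $y_n$ rewrites $\overline{(y_n+\varepsilon)\partial_n v}$ as $v(y',\varepsilon)-\bar v$; substituting these into the definitions of $F$ and $G$ yields \eqref{FG_bound}. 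Finally, $\bar v(0')=0$: in the situation of interest the solution of \eqref{robin} is odd in $x_j$ for some $1\le j\le n-1$ while $\widetilde\Omega$, and hence $f$ and $g$, is symmetric in $x_j$, so the map \eqref{x_to_y} sends $x_j\mapsto -x_j$ to $y_j\mapsto -y_j$; thus $v$, and therefore $\bar v$, is odd in $y_j$ and vanishes at the origin.

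\textbf{Main obstacle.} Nothing here is conceptually difficult, but the delicate point is Step 2: the various pieces must be allocated so that the leading operator is exactly the degenerate-elliptic operator appearing in \eqref{equation_v_bar}, while $F$ and $G$ each retain an extra gain (a power of $\varepsilon$, or the powers $|y'|$ and $|y'|^{2+\sigma}$ supplied by $C^{2,\sigma}$ regularity and \eqref{fg_2}) so that they can later be absorbed perturbatively. The precise algebraic form of \eqref{equation_v_bar} matters, since the blow-up analysis proving Theorems \ref{thm_upperbound} and \ref{main_thm} is built directly on it.
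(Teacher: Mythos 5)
Your proposal follows the paper's derivation essentially step by step: pass to the weak form, flatten via \eqref{x_to_y} to get \eqref{equation_v}, vertically average, convert the $\partial_n(a^{nj}\partial_j v)$ term into boundary fluxes using the Robin conditions, and then allocate the remainders into $\dv F$ (the $e^i$ and cross-derivative pieces) and $G$ (the deviation of the boundary traces from $2\gamma^{-1}\bar v$). The two small deviations from the paper are worth noting. First, for the $F$ bound the paper simply invokes $|a^{in}|\le C\varepsilon|y'|$ pointwise to get $|\overline{a^{in}\partial_n v}|\le C\varepsilon|y'|\,|\overline{\partial_n v}|$, whereas you split $a^{in}=-2\varepsilon\partial_i g-(y_n+\varepsilon)\partial_i(f-g)$ and integrate the second piece by parts in $y_n$; this produces $\varepsilon|y'|\max|\partial_n v|$ rather than $\varepsilon|y'|\,|\overline{\partial_n v}|$ for that piece, which is in fact the more literal bound (since $a^{in}$ depends on $y_n$, the paper's written form is really an abbreviation for an average-of-modulus or max bound) and is used identically downstream via \eqref{grad_n_v_bound}. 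Second, you supply a justification for $\bar v(0)=0$ via the oddness in $x_j$ of $u$ together with the symmetry of $f,g$; the paper's lemma proof is silent on this point and, as stated, \eqref{narrow_region} alone does not force $\bar v(0)=0$ — it is supplied by the oddness hypothesis active in Theorems \ref{thm_upperbound} and \ref{main_thm}, exactly as you observe. So your argument is correct, slightly more explicit on the normalization, and equivalent on the $F$ bound.
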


\begin{proof}
The derivation of equation \eqref{equation_v_bar} follows from the argument above. It remains to prove the estimates \eqref{FG_bound}. Since $f,g$ satisfy \eqref{fg_0}, we have $|a^{in}| \le C \varepsilon |y'|$ for $1 \le i \le n-1$. Therefore, it is straightforward to see that
\begin{align*}
|F(y')| \le C\Big(|\overline{a^{in}\partial_n v}| + |e^i \partial_i \bar v)|\Big) \le C\Big(\varepsilon|y'| |\overline{\partial_n v}| + |e^1| |\nabla_{y'}\overline{v}|\Big).
\end{align*}
For the $G$ term, we have
\begin{align*}
|G(y')| \le & \gamma^{-1}\sqrt{1+ |\nabla_{y'}f|^2} |v(y', \varepsilon) - \bar{v}(y')| + \gamma^{-1}\sqrt{1+ |\nabla_{y'}g|^2} |v(y', -\varepsilon) - \bar{v}(y')|\\
&\gamma^{-1} \Big(\sqrt{1+ |\nabla_{y'}f|^2} + \sqrt{1+ |\nabla_{y'}g|^2} -2 \Big)|\bar{v}(y')|\\
\le& C\Big(\varepsilon \max_{y_n \in (-\varepsilon, \varepsilon)} |\partial_n v| + |y'|^2 |\overline{v}| \Big).
\end{align*}
This completes the proof.
\end{proof}

\section{Proof of Theorem \ref{thm_upperbound}}\label{sec5}
In this section, we prove Theorem \ref{thm_upperbound}. Without loss of generality, we may assume $\mu = 1$. Namely, we consider
\begin{equation*}
f(x')- g(x') =|x'|^2 + O(|x'|^{2+\sigma}) \quad\mbox{for}~~0<|x'|<R_0.
\end{equation*}
In this case,
\begin{equation}
\label{alpha_new}
\alpha = \frac {-(n-1)+\sqrt{(n-1)^2+4(n-2 +  2/\gamma)}}{2}.
\end{equation}
For general $\mu > 0$, we only need to replace $\varepsilon$ and $\gamma$ in the proof by $\varepsilon/\mu$ and $\mu\gamma$ respectively, in view of equation \eqref{equation_v_bar}. The strategy of the proof is similar to the proof of \cite{DLY}*{Theorem 1.1}. However, some modifications are needed to address the case $n = 2$ and the extra zero-order term $-2\gamma^{-1}\bar{v}$ in the equation \eqref{equation_v_bar}.

For $\varepsilon > 0$, $\sigma, s \in \bR$, an open set $D \in \bR^{n-1}$, we introduce the following norm
\begin{equation}\label{def_norm}
\| F \|_{\varepsilon, \sigma,s,D}: = \sup_{y' \in D} \frac{|F(y')|}{|y'|^{\sigma} (\varepsilon + |y'|^2)^{1-s}}.
\end{equation}
Throughout this section, $B_R$ always lies in $\bR^{n-1}$. For any $0 < R < R_0$, we denote
$$
(u)_{\partial B_R}:= \fint_{\partial B_R} u \, dS
$$
to be the average of $u$ over $\partial B_R$. 

First, we establish estimates for $\bar{v}$.

\begin{proposition}
\label{prop_grad_v_bar_control}
For $n \ge 2$, $0 \le s \le 1/2$, $\gamma > 0$, $\sigma> 0$, $1+\sigma - 2s \neq \min\{\alpha,1\}$, and $\varepsilon > 0$,
let $\bar{v} \in H^1(B_{1})$ be a solution of
\begin{equation*}
\dv((\varepsilon+|y'|^2)\nabla \bar v) -2\gamma^{-1}\bar{v} = \dv F + G \quad\text{in}\,\,B_{R_0}\subset \bR^{n-1},
\end{equation*}
where $F,G$ satisfy
\begin{equation}
\label{FG_bound3}
\|F\|_{\varepsilon, \sigma,s,B_{R_0}}< \infty , \quad \|G\|_{\varepsilon, \sigma-1,s,B_{R_0}} < \infty.
\end{equation}
Then for any $R \in (0, R_0/2)$, we have
\begin{equation}
\label{v_bar_L2_sphere}
\begin{aligned}
&\dashnorm{\bar{v} - (\bar{v}_{\partial B_R})}{L^2(\partial B_R)}\\
&\le C(\|F\|_{\varepsilon, \sigma,s,B_{R_0}} + \|G\|_{\varepsilon, \sigma-1,s,B_{R_0}} +  \dashnorm{\bar v - (\bar{v})_{\partial B_{R_0}}}{L^2(\partial B_{R_0})})  R^{\tilde\alpha},
\end{aligned}
\end{equation}
where $\tilde\alpha := \min \{ \alpha, 1, 1 + \sigma - 2s \}$, $\alpha$ is given in \eqref{alpha_new}, and $C$ is some positive constant depending only on $n$, $\sigma$, $s$, and is independent of $\varepsilon$.
\end{proposition}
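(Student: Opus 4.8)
\textbf{Proof proposal for Proposition \ref{prop_grad_v_bar_control}.}

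The plan is to run an iteration on dyadic annuli centered at the origin, comparing $\bar v$ at successive scales. The natural comparison functions are the homogeneous solutions of the limiting operator obtained by sending $\varepsilon \to 0$, namely $\dv(|y'|^2 \nabla w) - 2\gamma^{-1} w = 0$ in $\bR^{n-1} \setminus \{0\}$. Writing $w = \rho^{\lambda} Y(\omega)$ in polar coordinates $y' = \rho \omega$ with $\omega \in \bS^{n-2}$, separation of variables gives $\lambda(\lambda + n - 2) - 2\gamma^{-1} = -\Lambda$ where $\Lambda$ is an eigenvalue of the Laplace--Beltrami operator on $\bS^{n-2}$; the spherically symmetric mode $\Lambda = 0$ produces the two exponents, and the positive one is precisely $\alpha$ as defined in \eqref{alpha_new}. (When $n = 2$ the sphere $\bS^0$ is two points, the Laplace--Beltrami spectrum is $\{0\}$ only, and $\alpha$ still solves $\alpha^2 = 2/\gamma$; the argument must be run slightly differently here since $\partial B_R$ is disconnected, but the mean on each of the two points and a one-dimensional ODE comparison handle it.) The number $1$ in $\tilde\alpha = \min\{\alpha, 1, 1 + \sigma - 2s\}$ comes from the next mode (the $\Lambda = n-2$, i.e. first nonconstant spherical harmonic, giving exponent $1$), while $1 + \sigma - 2s$ is the contribution forced by the inhomogeneities $F, G$ through their prescribed growth \eqref{def_norm}.

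The key steps, in order, are as follows. First I would set up the quantity $\phi(R) := \dashnorm{\bar v - (\bar v)_{\partial B_R}}{L^2(\partial B_R)}$ and establish a Caccioppoli-type energy inequality on annuli $B_{2R} \setminus B_{R/2}$ controlling $\int (\varepsilon + |y'|^2)|\nabla \bar v|^2$ by $\phi$ at comparable scales plus the norms of $F, G$; the zero-order term $-2\gamma^{-1}\bar v$ is favorably signed when testing against $\bar v - (\bar v)_{\partial B_R}$ only up to a correction, so one tests against $\bar v$ itself or keeps the extra $L^2$ term and absorbs it, using that it carries a good sign in the relevant direction. Second, I would prove a one-step improvement: there is $\theta \in (0,1/2)$ and $\lambda \in (0,1]$ (any $\lambda < \tilde\alpha$, or $\tilde\alpha$ itself if strict inequalities hold, but one typically takes $\lambda$ slightly below and then upgrades) such that
\begin{equation*}
\phi(\theta R) \le \tfrac12 \theta^{\tilde\alpha}\,\phi(R) + C\theta^{\tilde\alpha}\big(\|F\|_{\varepsilon,\sigma,s,B_{R_0}} + \|G\|_{\varepsilon,\sigma-1,s,B_{R_0}}\big) R^{\tilde\alpha},
\end{equation*}
valid for all $R \in (0, R_0/2)$, by a compactness/normalization argument: rescale $\bar v$ on $B_R$ to unit scale, note that the rescaled equation has coefficients converging (as the scale and $\varepsilon/R^2$ vary) to the limiting operator above, and that on the limiting operator the projection onto the spherical-average mode decays like $\rho^{\tilde\alpha}$ away from $\rho = 0$ while higher modes decay faster; the forcing terms contribute the $R^{\tilde\alpha}$ with the $\|F\|, \|G\|$ prefactor because of their scaling exponent $1 + \sigma - 2s$. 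Third, I would iterate this one-step estimate from $R_0/2$ down to the given $R$, summing the geometric series (here the hypothesis $1 + \sigma - 2s \neq \min\{\alpha,1\}$ is what prevents a resonance/logarithmic loss in the summation), to obtain $\phi(R) \le C(\|F\| + \|G\| + \phi(R_0/2)) R^{\tilde\alpha}$, and finally replace $\phi(R_0/2)$ by $\phi(R_0)$ via one more application of the energy inequality on $B_{R_0} \setminus B_{R_0/2}$.

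The main obstacle is the one-step improvement, specifically making the compactness argument uniform in the two parameters $\varepsilon$ and the scale $R$ simultaneously. When $\varepsilon \gtrsim R^2$ the operator on $B_R$ after rescaling is uniformly elliptic and one is in the easy regime (classical De Giorgi--Nash--Moser plus the simple observation that constants are almost solutions); when $\varepsilon \ll R^2$ the rescaled operator is genuinely degenerate at the origin and one must use the limiting operator $\dv(|y'|^2 \nabla \cdot) - 2\gamma^{-1}$, for which one needs its basic regularity theory and the precise decay rate $\tilde\alpha$ of solutions with vanishing spherical average. A clean way to avoid a delicate compactness argument across the whole range is to instead prove the one-step estimate directly by an explicit barrier/ODE comparison for the spherically averaged part of $\bar v$ — set $\psi(\rho) := (\bar v)_{\partial B_\rho}$, derive from \eqref{equation_v_bar} an ODE inequality for $\psi$ with right-hand side controlled by $\phi$ and $\|F\|,\|G\|$, and compare with the explicit supersolution $\rho^{\tilde\alpha}$ — together with a separate, routine energy estimate showing the oscillation $\phi(R)$ is comparable to the spherical-average increment over a dyadic range. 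I would structure the write-up around this second, more hands-on route, since it keeps all constants manifestly independent of $\varepsilon$ and sidesteps the degenerate compactness issue, at the cost of slightly more computation with the Euler-type ODE.
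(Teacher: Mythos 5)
There is a genuine conceptual error underlying your proposal, both in where the exponent $\alpha$ comes from and in the mechanics of your ``hands-on'' ODE route.

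You write that separation of variables $w = \rho^\lambda Y(\omega)$ for the limiting operator gives the indicial relation $\lambda(\lambda+n-2) - 2\gamma^{-1} = -\Lambda$ and that ``the spherically symmetric mode $\Lambda = 0$ produces the two exponents, and the positive one is precisely $\alpha$.'' Neither statement is correct. The correct indicial relation is $\lambda^2 + (n-1)\lambda - \Lambda - 2\gamma^{-1} = 0$ (the $+2\lambda$ from $\nabla(|y'|^2)\cdot\nabla w$ shifts $n-2$ to $n-1$). For $\Lambda = 0$ the positive root is $\frac{-(n-1)+\sqrt{(n-1)^2+8/\gamma}}{2}$, which is \emph{not} $\alpha$. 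The $\alpha$ of \eqref{alpha_new} is the positive root for $\Lambda = n-2$, i.e. the \emph{first nonconstant} spherical harmonic ($k=1$). This matters: the quantity $\phi(R) := \dashnorm{\bar v - (\bar v)_{\partial B_R}}{L^2(\partial B_R)}$ in the target estimate \eqref{v_bar_L2_sphere} projects away the constant mode entirely, and the rate $\alpha$ arises because the $k=1$ mode is the slowest-decaying one that survives. Your attribution of the ``$1$'' in $\min\{\alpha,1\}$ to ``the next mode'' is also off; in the paper's Lemma \ref{lemma_v1} the rate $1$ appears because when $0 < \gamma \le 1$ every mode $k \ge 1$ is compared against the barrier $r$ (Case 1), whereas $\alpha$ (with $\alpha < 1$) is only used when $\gamma > 1$ (Case 2). (Also, for $n=2$ your claim $\alpha^2 = 2/\gamma$ is wrong; the equation is $\alpha^2 + \alpha = 2/\gamma$.)

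This misattribution is fatal for the ``clean way'' you propose. You suggest deriving an ODE inequality for $\psi(\rho) := (\bar v)_{\partial B_\rho}$ and comparing with $\rho^{\tilde\alpha}$, then closing via ``a separate, routine energy estimate showing the oscillation $\phi(R)$ is comparable to the spherical-average increment over a dyadic range.'' But $\psi$ is exactly the mode that $\phi$ does not see, and the claimed comparability is false: take $\bar v = y_1$, for which $\psi \equiv 0$ identically while $\phi(R) \sim R$. There is no way to recover the oscillation from the spherical average, so this route does not prove the proposition as stated. Your first route (compactness/normalization to a limiting degenerate operator) is closer in spirit to a correct proof, but you leave it as a sketch and, as you yourself note, making it uniform in $\varepsilon$ and the scale $R$ is the delicate point. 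The paper sidesteps compactness entirely: it decomposes $\bar v = v_1 + v_2$ with $v_1$ solving the homogeneous equation with boundary data $\bar v$ and $v_2$ solving the inhomogeneous equation with zero boundary data; it expands $v_1$ in spherical harmonics so that \emph{each} nonconstant mode $V_{k,i}$ solves a one-dimensional ODE to which the explicit barriers $r$ (if $\gamma\le 1$) or $r^{\alpha_k}$ (if $\gamma > 1$) apply directly (Lemma \ref{lemma_v1}), giving decay $\widehat\alpha = \min\{\alpha,1\}$; it bounds $\|v_2\|_{L^\infty}$ by a weighted Moser iteration using the Caffarelli--Kohn--Nirenberg inequality (Lemma \ref{lemma_v2}); and it then runs the dyadic iteration, where the non-resonance hypothesis $1+\sigma-2s \neq \min\{\alpha,1\}$ makes the geometric sum converge. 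The ODE-per-mode mechanism is the essential idea that is missing from your proposal, and applying an ODE comparison to the constant mode $\psi$ cannot substitute for it.
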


For the proof, we use an iteration argument based on the following lemmas.

\begin{lemma}
\label{lemma_v1}
For $n \ge 2$, $\gamma > 0$, and $\varepsilon > 0$, let $v_1 \in H^1(B_{R_0})$ satisfy
\begin{equation}
\label{equation_v1}
\dv((\varepsilon+|y'|^2)\nabla v_1) -2\gamma^{-1}v_1=0\quad\text{in}\,\,B_{R_0} \subset \bR^{n-1}.
\end{equation}
Then for any $0 < \rho < R \le R_0$, we have
$$
\left( \fint_{\partial B_\rho} |v_1(y') - (v_1)_{\partial B_\rho}|^2 \, dS\right)^{\frac{1}{2}} \le \left(\frac{\rho}{R} \right)^{\widehat\alpha} \left( \fint_{\partial B_R} |v_1(y') - (v_1)_{\partial B_R}|^2 \, dS \right)^{\frac{1}{2}},
$$
where $\widehat{\alpha}:= \min\{\alpha,1 \}$, and $\alpha$ is given in \eqref{alpha_new}.
\end{lemma}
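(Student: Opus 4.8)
\textbf{Proof proposal for Lemma \ref{lemma_v1}.}

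The plan is to exploit the rotational symmetry of the operator $\dv((\varepsilon+|y'|^2)\nabla\cdot) - 2\gamma^{-1}$ and decompose $v_1$ into spherical harmonics on each sphere $\partial B_\rho$. First I would write, in polar coordinates $y' = \rho\,\omega$ with $\omega \in \bS^{n-2}$,
\begin{equation*}
v_1(\rho\,\omega) = \sum_{k\ge 0} \phi_k(\rho)\, Y_k(\omega),
\end{equation*}
where $\{Y_k\}$ is an orthonormal basis of spherical harmonics on $\bS^{n-2}$ with $-\Delta_{\bS^{n-2}} Y_k = \lambda_k Y_k$, and $\lambda_k = \ell(\ell+n-3)$ for the appropriate degree $\ell = \ell(k)$. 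Plugging into \eqref{equation_v1} gives, for each mode, the ODE
\begin{equation*}
\big((\varepsilon+\rho^2)\,\rho^{n-2}\,\phi_k'\big)' - \Big(\lambda_k\,\rho^{n-4}(\varepsilon+\rho^2) + 2\gamma^{-1}\rho^{n-2}\Big)\phi_k = 0 .
\end{equation*}
Note that $(\bar v - (\bar v)_{\partial B_\rho})$ on $\partial B_\rho$ corresponds precisely to dropping the $k=0$ term, so the desired inequality is the statement that $\sum_{k\ge 1}|\phi_k(\rho)|^2 \le (\rho/R)^{2\widehat\alpha}\sum_{k\ge 1}|\phi_k(R)|^2$. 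Since the left side is monotone in each term, it suffices to show for every single mode $k\ge 1$ that $|\phi_k(\rho)|/|\phi_k(R)|$ is controlled by $(\rho/R)^{\widehat\alpha}$, or more robustly, that $\rho\mapsto |\phi_k(\rho)|$ has logarithmic derivative at least $\widehat\alpha/\rho$.

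The key step is a monotonicity/comparison estimate for the scalar ODE. Following the strategy of \cite{DLY}, I would introduce the quotient $h_k(\rho) := (\varepsilon+\rho^2)\rho^{n-2}\phi_k'(\rho)/\phi_k(\rho)$ (after first checking $\phi_k$ does not vanish on the relevant interval, which follows from the maximum principle / uniqueness for the mode equation since the zeroth-order coefficient is nonpositive), and derive a Riccati equation for $h_k$. The two competing terms are the geometric one, $\lambda_k \rho^{n-4}(\varepsilon+\rho^2)$, which for the lowest nontrivial mode $\lambda_1 = n-2$ produces the indicial exponent governing $\alpha$, and the extra zero-order term $2\gamma^{-1}\rho^{n-2}$, which also feeds into $\alpha$ through the combination $n-2+2/\gamma$ under the square root in \eqref{alpha_new}; the $+\varepsilon$ regularization only helps (it makes the effective coefficient larger near the origin, forcing faster decay). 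A clean way to package this is to compare with the $\varepsilon=0$ Euler-type equation, whose solutions are exact powers $\rho^{\alpha_k}$ and $\rho^{-(n-3)-\alpha_k}$ with $\alpha_k$ increasing in $k$ and $\alpha_1 = \alpha$; one then shows that the regular-at-origin solution of the $\varepsilon>0$ equation decays at least as fast as $\rho^{\min\{\alpha_k,1\}}$ relative to its value at $R$. Capping at $1$ is needed because for large $k$ the natural exponent $\alpha_k$ exceeds $1$, but the crude bound one can prove uniformly in $\varepsilon$ only gives exponent $\min\{\alpha_k,1\}\ge\min\{\alpha,1\}=\widehat\alpha$; this is the standard phenomenon in \cite{DLY,DLY2}. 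Summing over $k\ge 1$ with Parseval then yields the claim.

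The main obstacle I anticipate is handling the zero-order term $-2\gamma^{-1}v_1$ uniformly in $\varepsilon$ and across all modes, since the reference [DLY] argument was for the pure Neumann (insulated) case without such a term; one must verify that it does not destroy the sign structure in the Riccati comparison and that it only shifts the indicial exponent in the favorable direction (larger $\alpha$, hence faster decay). A secondary technical point is the case $n=2$, where $\bS^{n-2} = \bS^0$ is two points and "spherical harmonics" degenerate to the even/odd decomposition; here the mode equation is an ODE on an interval with the two modes $\lambda_0 = 0$, $\lambda_1 = 0$ as well, but $\alpha$ in \eqref{alpha_new} with $n=2$ reads $\alpha = (-1+\sqrt{1+8/\gamma})/2$, coming entirely from the $2/\gamma$ term, so the argument goes through with the odd mode playing the role of the lowest nontrivial harmonic. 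I would treat $n=2$ either as a special case of the same Riccati analysis or by a direct ODE computation.
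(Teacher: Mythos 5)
Your decomposition into spherical harmonics and mode-by-mode ODE analysis is exactly the paper's structure, so the skeleton is right. Where you diverge is in the comparison mechanism: you propose a Riccati quotient $h_k = (\varepsilon+\rho^2)\rho^{n-2}\phi_k'/\phi_k$, whereas the paper uses direct maximum-principle comparisons with explicit power barriers. Specifically, the paper verifies by computation that, for each mode $k\ge1$, $L_k r \le 0$ when $0<\gamma\le1$ and $L_k r^{\alpha_k}<0$ when $\gamma>1$ (with $\alpha_k$ the indicial root for mode $k$), then applies the ODE maximum principle to conclude $|V_{k,i}(\rho)|\le (\rho/R)^{\widehat\alpha}|V_{k,i}(R)|$, and finally sums by Parseval. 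That is more elementary than the Riccati route and avoids having to show $\phi_k\neq 0$ up front.

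One point in your heuristics is misleading and is precisely where the real content of the proof lives. You claim the $+\varepsilon$ regularization ``only helps'' and pushes the decay faster. In fact the computation yields $L_k r^{\alpha_k} = -2\varepsilon(\alpha_k - \gamma^{-1})\,r^{\alpha_k-2}/(\varepsilon+r^2)$, so the sign of the barrier inequality is governed by $\alpha_k-\gamma^{-1}$, and this quantity can be \emph{negative}: one has $\alpha_k-\gamma^{-1} = -\tfrac12(\alpha_k+k+n-3)(\alpha_k-k)$, which is positive for $k\ge1$ precisely when $\alpha_k<k$, and for $k=1$ that fails exactly when $\gamma\le 1$ (equivalently $\alpha\ge1$). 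In that regime $r^{\alpha}$ is a \emph{subsolution}, not a supersolution, and the $\varepsilon$-term is working against you; the paper recovers by switching to the barrier $r$, which is why the final exponent is $\widehat\alpha=\min\{\alpha,1\}$. So the cap at $1$ is not a crude uniformity loss for large $k$ as you suggest -- for $\gamma>1$ the exponent $\alpha_k$ is used directly for every $k$ and simply dominated by $\alpha_1=\alpha$; rather the cap is forced at $k=1$ when $\gamma\le1$. Your treatment of $n=2$ (odd/even decomposition in place of spherical harmonics) matches the paper's.

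If you pursue the Riccati route you would need to carry out exactly this sign analysis inside the Riccati comparison, so it is not a shortcut, but it is a legitimate alternative once the key inequality $\alpha_k>\gamma^{-1} \iff \alpha_k<k$ and the $\gamma\le1$ fallback to exponent $1$ are in place.
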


\begin{proof}
By the elliptic theory, $v_1 \in C^\infty(B_{1})$. By scaling, it suffices to prove the lemma for $R = 1$. We first provide a proof for the case when $n \ge 3$. Denote $y' = (r ,\xi) \in (0,1) \times \bS^{n-2}$. We can rewrite \eqref{equation_v1} as
$$
\partial_{rr} v_1 +\left(\frac {n-2}r +\frac{2r}{\varepsilon+r^2} \right)\partial_{r} v_1 +\frac 1 {r^2}\Delta_{\bS^{n-2}} v_1 - \frac{2}{\gamma(\varepsilon+r^2)}v_1 =0 \quad \mbox{in}~~B_1 \setminus \{0\}.
$$
Take the spherical harmonic decomposition
\begin{equation}
\label{v1_expansion}
v_1(y') = V_0(r) Y_0 + \sum_{k=1}^\infty \sum_{i=1}^{N(k)} V_{k,i}(r)Y_{k,i}(\xi), \quad y' \in B_1\setminus\{0\},
\end{equation}
where $Y_0 = |\bS^{n-2}|^{-1/2}$, $Y_{k,i}$ is a $k$-th degree spherical harmonics, that is,
$$
-\Delta_{\bS^{n-2}} Y_{k,i} = k(k+n-3)Y_{k,i}
$$ and $\{Y_{k,i}\}_{k,i} \cup Y_0$ forms an orthonormal basis of $L^2(\bS^{n-2})$.
Then
$$
V_0(r) Y_0 = \fint_{\bS^{n-2}} v_1(r,\xi) \, d\xi = (v_1)_{\partial B_r},
$$
$V_{k,i}(r) \in C^2(0,1)$ is given by
$$
V_{k,i}(r) = \int_{\bS^{n-2}} v_1(y') Y_{k,i}(\xi) \, d\xi
$$
and satisfies
\begin{equation*}
\begin{aligned}
L_k V_{k,i}:=& V_{k,i}''(r) +\left(\frac {n-2}r +\frac{2r}{\varepsilon+r^2} \right)V_{k,i}'(r)\\
 &- \left( \frac {k(k+n-3)} {r^2} + \frac{2}{\gamma(\varepsilon + r^2)} \right) V_{k,i}(r)=0 \quad \mbox{in}~~(0,1)
\end{aligned}
\end{equation*}
for each $k \in \bN$, $i = 1, 2 ,\ldots, N(k)$. Since $v_1 \in C^\infty(B_1)$ and  $\{Y_{k,i}\}_{k,i} \cup Y_0$ is an orthonormal basis, we have
\begin{align*}
\sum_{k=1}^\infty \sum_{i=1}^{N(k)}  |V_{k,i}(r)|^2 = \fint_{\partial B_\rho} |v_1(y') - (v_1)_{\partial B_r}|^2 \, dS \to 0 \quad \mbox{as}~r \to 0.
\end{align*}
This implies 
$$
|V_{k,i}(r)| \to 0 \quad \mbox{as}~r \to 0
$$
for each $k \in \bN$, $i = 1, 2 ,\ldots, N(k)$.

Next, we consider two separate cases $0< \gamma \le 1$ and $\gamma > 1$.

\textit{Case 1:} When $0< \gamma \le 1$, for any $k \in \bN$, by a direct computation,
$$
L_k r = \frac{n-2}{r} - \frac{k(k+n-3)}{r} + \left(1- \frac{1}{\gamma} \right) \frac{2r}{\varepsilon + r^2} \le 0.
$$
Since
$$
\pm V_{k,i}(r) - |V_{k,i}(1)|r \to 0 \quad \mbox{as}~~r \searrow 0,
$$
and clearly,
$$
\pm V_{k,i}(r) - |V_{k,i}(1)|r \le 0 \quad \mbox{when}~~r =1.
$$
By the maximum principle,
\begin{equation}
\label{V_decay_1}
|V_{k,i}(r)|\le r |V_{k,i}(1)| \quad \mbox{for}~~0<r\le1.
\end{equation}

\textit{Case 2:} When $\gamma > 1$, for any $k \in \bN$, let
$$
\alpha_k:= \frac {-(n-1)+\sqrt{(n-1)^2+4[k(k+n-3)+2/\gamma]}}{2} > 0.
$$
Note that $\alpha_1=\alpha$, where $\alpha$ is defined in \eqref{alpha_new}.
By a direct computation,
\begin{align*}
\begin{aligned}
L_kr^{\alpha_k} =& \alpha_k(\alpha_k-1)r^{\alpha_k-2} + \left( n-2 + \frac{2r^2}{\varepsilon + r^2} \right) \alpha_k r^{\alpha_k-2}\\
&- \left( k(k+n-3) + \frac{2}{\gamma} \frac{r^2}{\varepsilon+r^2} \right)r^{\alpha_k-2}\\
=& - 2 \varepsilon \left( \alpha_k - \frac{1}{\gamma} \right) \frac{ r^{\alpha_k - 2}}{\varepsilon+r^2}  \quad\quad\mbox{for}~r\in(0,1),
\end{aligned}
\end{align*}
where in the second line, we used the equality
\begin{equation}\label{alpha_k_gamma_equality}
\alpha_k^2 + (n-1) \alpha_k - (k(k+n-3) + 2\gamma^{-1}) = 0
\end{equation}
followed from the definition of $\alpha_k$. By \eqref{alpha_k_gamma_equality} again, we have
\begin{align*}
\alpha_k - \frac{1}{\gamma} &= \alpha_k - \frac{1}{2} \Big(\alpha_k^2 + (n-1) \alpha_k - k(k+n-3) \Big)\\
&= - \frac{1}{2} (\alpha_k + k+n-3) (\alpha_k - k) > 0,
\end{align*}
since
\begin{align*}
\alpha_k &< \frac {-(n-1)+\sqrt{(n-1)^2+4k^2 + 4(n-1)k + 8 - 8k}}{2}\\
&\le \frac {-(n-1)+\sqrt{(n-1 + 2k)^2}}{2} \le k.
\end{align*}
Therefore, 
$$
L_kr^{\alpha_k} < 0\quad\quad\mbox{for}~r\in(0,1).
$$
Since 
$$
\pm V_{k,i}(r) - |V_{k,i}(1)|r^{\alpha_k} \to 0 \quad \mbox{as}~~r \searrow 0,
$$
and clearly,
$$
\pm V_{k,i}(r) - |V_{k,i}(1)|r^{\alpha_k} = 0 \quad \mbox{when}~~r =1.
$$
By the maximum principle,
\begin{equation}
\label{V_decay_2}
|V_{k,i}(r)|\le r^{\alpha_k} |V_{k,i}(1)| \quad \mbox{for}~~0<r<1.
\end{equation}

Combining these two cases, it follows from \eqref{v1_expansion},\eqref{V_decay_1}, and \eqref{V_decay_2},
\begin{align*}
\fint_{\partial B_\rho} |v_1(y') - (v_1)_{\partial B_\rho}|^2 \, dS &= \sum_{k=1}^\infty \sum_{i=1}^{N(k)}  |V_{k,i}(\rho)|^2 \\
&\le \rho^{2\widehat\alpha} \sum_{k=1}^\infty \sum_{i=1}^{N(k)}|V_{k,i}(1)|^2 \\
&= \rho^{2\widehat\alpha} \fint_{\partial B_1} |v_1(y')- (v_1)_{\partial B_1}|^2 \, d\sigma.
\end{align*}
This completes the proof for $n \ge 3$. 

When $n = 2$, we use $(\bar{v}(r) - \bar{v}(-r))/2$ in place of $V_{1,1}(r)$, and repeat the arguments for the estimates \eqref{V_decay_1} and \eqref{V_decay_2} to obtain
$$
\left| \frac{\bar{v}(\rho) - \bar{v}(-\rho)}{2} \right| \le \rho^{\widehat{\alpha}} \left| \frac{\bar{v}(1) - \bar{v}(-1)}{2} \right|.
$$
The lemma is proved.
\end{proof}

\begin{lemma}
\label{lemma_v2}
For $n \ge 2$, $s \le 1/2$, $\sigma > 0$, and $\varepsilon > 0$, suppose that $F,G$ satisfy \eqref{FG_bound3}, and $v_2 \in H^1_0(B_{1})$ satisfies
\begin{equation}
\label{equation_v2}
\dv((\varepsilon+|y'|^2)\nabla v_2) - 2\gamma^{-1} v_2=\dv F + G\quad\text{in}\,\,B_{1}\subset \bR^{n-1}.
\end{equation}
Then we have
\begin{equation}\label{v2_L^infty}
\|v_2\|_{L^\infty(B_{1})}\le C(\|F\|_{\varepsilon,\sigma,s, B_{1}} + \|G\|_{\varepsilon,\sigma-1,s,B_{1}}),
\end{equation}
where $C>0$ depends only on $n$, $\sigma$, and $s$, and is in particular independent of $\varepsilon$.
\end{lemma}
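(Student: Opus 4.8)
The plan is to exploit the scale-invariance of the weighted operator together with a Moser-type iteration or, more efficiently here, a direct comparison argument with explicit radial barriers. First I would normalize: set $M := \|F\|_{\varepsilon,\sigma,s,B_1} + \|G\|_{\varepsilon,\sigma-1,s,B_1}$ and assume $M = 1$ after dividing. The goal is then a uniform-in-$\varepsilon$ bound $\|v_2\|_{L^\infty(B_1)} \le C$. Because $v_2 \in H^1_0(B_1)$, the natural first step is an energy estimate: testing \eqref{equation_v2} against $v_2$ itself gives
\[
\int_{B_1} (\varepsilon + |y'|^2)|\nabla v_2|^2 + 2\gamma^{-1}\int_{B_1} v_2^2 = -\int_{B_1} F\cdot\nabla v_2 - \int_{B_1} G\, v_2,
\]
and after estimating the right side using the definition \eqref{def_norm} of the norms (note that $|y'|^\sigma(\varepsilon+|y'|^2)^{1-s}$ is bounded on $B_1$ since $\sigma > 0$ and $s \le 1/2$, and that the weight $(\varepsilon+|y'|^2)^{-1}$ can be absorbed because $F$ carries an extra factor $|y'|^\sigma$ with $\sigma>0$), one obtains $\int_{B_1}(\varepsilon+|y'|^2)|\nabla v_2|^2 + \int_{B_1}v_2^2 \le C$ uniformly in $\varepsilon$. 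This is the base of the iteration.

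Next I would pass from the energy bound to an $L^\infty$ bound. The cleanest route is a De Giorgi–Nash–Moser iteration adapted to the degenerate weight $\omega(y') = \varepsilon + |y'|^2$. Away from the origin, i.e. on $B_1 \setminus B_\delta$ for fixed $\delta$, the weight is comparable to a constant depending only on $\delta$, so standard uniformly elliptic theory (e.g. \cite{GT}*{Theorem 8.15}) gives $\|v_2\|_{L^\infty(B_{1-\delta}\setminus B_{2\delta})} \le C(\delta)(\|v_2\|_{L^2(B_1)} + \|F\|_{L^\infty} + \|G\|_{L^\infty})$; the zero-order term $-2\gamma^{-1}v_2$ has the good sign, so it only helps. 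The real work is the estimate near the origin. There I would use dyadic annuli $A_j = B_{2^{-j}} \setminus B_{2^{-j-1}}$: rescaling $v_2$ on $A_j$ to unit scale by $y' \mapsto 2^{-j}y'$ turns the equation into a uniformly elliptic equation on an annulus (with $\varepsilon$ replaced by $2^{2j}\varepsilon$, which stays bounded as $j$ grows only while $2^{-j} \gtrsim \sqrt\varepsilon$; for $2^{-j} \lesssim \sqrt\varepsilon$ the weight is $\approx \varepsilon$ and again uniformly elliptic after rescaling by $\sqrt\varepsilon$). On each rescaled annulus, interior/boundary elliptic estimates bound the sup of the rescaled solution by its $L^2$ average plus the rescaled norms of $F,G$, and the scaling weights $|y'|^\sigma(\varepsilon+|y'|^2)^{1-s}$ are precisely chosen so that these rescaled source terms are summable in $j$. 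Summing the geometric series (using $\sigma > 0$ to get decay, and $s \le 1/2$ to control the growth of the $(\varepsilon+|y'|^2)^{1-s}$ factor against the annulus volume) yields the claimed bound with $C$ independent of $\varepsilon$.

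The main obstacle is the degeneracy of the weight at the origin combined with the need for a constant that does not blow up as $\varepsilon \to 0$: one cannot simply apply uniformly elliptic estimates globally, and one cannot let the constant depend on $\delta \to 0$. The resolution, as sketched, is the two-regime rescaling — treating the region $|y'| \gtrsim \sqrt\varepsilon$ (where $\omega \approx |y'|^2$, a genuinely degenerate but $\varepsilon$-free weight admitting a Moser iteration, cf. the $A_2$-weight theory or the explicit computation in \cite{DLY}) and the region $|y'| \lesssim \sqrt\varepsilon$ (where $\omega \approx \varepsilon$, uniformly elliptic after dilation by $\sqrt\varepsilon$) separately, and checking carefully that the constraint $s \le 1/2$ makes the contributions from $F$ and $G$ sum to a finite geometric series. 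I expect the bookkeeping in the borderline annulus $|y'| \sim \sqrt\varepsilon$, where the two regimes meet, to require the most care, but no new idea beyond matching the two estimates there.
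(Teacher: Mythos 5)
Your high-level strategy---Moser iteration adapted to the degenerate weight $\varepsilon + |y'|^2$---is the right one, and the paper's proof is also a Moser iteration, but the dyadic-annulus implementation you describe has a genuine gap. The step where ``interior/boundary elliptic estimates bound the sup of the rescaled solution by its $L^2$ average plus the rescaled norms of $F,G$'' requires a uniform-in-$j$ bound on $\bigl(\fint_{A_j'}|v_2|^2\bigr)^{1/2}$, but your energy estimate only yields $\int_{B_1}|v_2|^2 \le C$, which gives $\fint_{A_j}|v_2|^2 \lesssim 2^{j(n-1)}$---a quantity that \emph{grows} as $j\to\infty$. Nothing in the proposal rules out $v_2$ concentrating near the origin at height $\sim \varepsilon^{-(n-1)/4}$ on $B_{\sqrt\varepsilon}$, which is consistent with $\|v_2\|_{L^2(B_1)}\le C$ yet is not $L^\infty$-bounded uniformly in $\varepsilon$. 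Also, ``summing the geometric series'' is not the relevant operation: for an $L^\infty$ bound one takes a supremum over $j$, so the local $L^2$ averages must themselves be uniformly bounded, and the global energy estimate does not provide that.

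Your parenthetical ``cf.\ the $A_2$-weight theory'' is in fact the correct mechanism, but it has to be applied \emph{globally} rather than annulus by annulus, and that is precisely what the paper does. The paper tests \eqref{equation_v2} with $-|v_2|^{p-2}v_2$, uses $\varepsilon+|y'|^2\ge|y'|^2$ (together with $\varepsilon<1$ and $s\le 1/2$ to absorb the remaining weight factors), and obtains a Caccioppoli-type inequality for $|v_2|^{p/2}$ in the $|y'|^2$-weighted $L^2$ space; the integrability gain then comes from the Caffarelli--Kohn--Nirenberg inequality $\|u\|_{L^{2(n+1)/(n-1)}(B_1,|y'|^2dy')}\le C\|\nabla u\|_{L^2(B_1,|y'|^2dy')}$, which is the Sobolev embedding for the $A_2$ weight $|y'|^2$. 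Iterating in $p$ gives $\|v_2\|_{L^{p_k}(B_1,|y'|^2dy')}\le C$ uniformly in $k$, and letting $p_k\to\infty$ yields the $L^\infty$ bound. This global weighted iteration avoids the annulus problem entirely because the weighted $L^2$ norm $\int|v_2|^2|y'|^2\,dy'$ \emph{is} controlled by the unweighted energy estimate (since $|y'|^2\le 1$ on $B_1$). If you wish to keep a regime split, you would need to propagate the bound inward from $\partial B_1$ using the Dirichlet condition rather than use the global $L^2$ bound on each annulus; but the paper's global approach is cleaner. Finally, note that the paper separately disposes of the trivial case $\varepsilon\ge 1$ (divide by $\varepsilon$ and use uniform ellipticity), which your write-up implicitly assumes away.
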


\begin{proof}
Without loss of generality, we assume
$$\|F\|_{\varepsilon,\sigma,s, B_{1}} + \|G\|_{\varepsilon,\sigma-1,s,B_{1}} = 1.$$ 
When $\varepsilon \ge 1$, we divide the equation \eqref{equation_v2} by $\varepsilon$ and obtain
$$
\dv((1+\varepsilon^{-1}|y'|^2)\nabla v_2) - 2(\varepsilon\gamma)^{-1} v_2=\varepsilon^{-1}\dv F + \varepsilon^{-1}G\quad\text{in}\,\,B_{1}\subset \bR^{n-1}.
$$
By assumptions,
$$
|\varepsilon^{-1}F| \le C|y'|^\sigma, \quad |\varepsilon^{-1}G| \le C|y'|^{\sigma-1},
$$
so that \eqref{v2_L^infty} follows from the standard Moser iteration. Therefore, we will focus on the case $0 < \varepsilon < 1$, where we will use Moser iteration in weighted $L^p$ spaces.

For $p \ge 2$, we multiply the equation \eqref{equation_v2} with $-|v_2|^{p-2}v_2$ and integrate by parts to obtain
\begin{equation}\label{energy}
\begin{aligned}
&(p-1) \int_{B_{1}}(\varepsilon + |y'|^2) |\nabla v_2|^{2}|v_2|^{p-2}\,dy' + 2\gamma^{-1}\int_{B_{1}} |v_2|^p \, dy' \\
&= (p-1)\int_{B_{1}}F \cdot \nabla v_2|v_2|^{p-2}\,dy' - \int_{B_{1}} G v_2 |v_2|^{p-2} \, dy'.
\end{aligned}
\end{equation}
By the definition in \eqref{def_norm},
$$
\begin{aligned}
|F(y')| &\le |y'|^{\sigma}(\varepsilon + |y'|^2)^{1-s}\|F\|_{\varepsilon,\sigma,s, B_{1}},\\|G(y')| &\le |y'|^{\sigma-1}(\varepsilon + |y'|^2)^{1-s} \|G\|_{\varepsilon,\sigma-1,s,B_{1}} \quad \mbox{for}~~y' \in B_1.
\end{aligned}
$$
To estimate the first term on the right-hand side of \eqref{energy}, we use Young's inequality, $s \le 1/2$,  and $\varepsilon < 1$,
\begin{align*}
&(p-1)\left|\int_{B_{1}}F \cdot \nabla v_2|v_2|^{p-2}\,dy' \right|\\
\le& \frac{(p-1)}{4}\int_{B_{1}}(\varepsilon + |y'|^2) |\nabla v_2|^{2}|v_2|^{p-2}\,dy'\\
&+  C(p-1)\int_{B_{1}}|y'|^{2\sigma}(\varepsilon + |y'|^2)^{1-2s} |v_2|^{p-2}\,dy'\\
\le& \frac{(p-1)}{4}\int_{B_{1}}(\varepsilon + |y'|^2) |\nabla v_2|^{2}|v_2|^{p-2}\,dy' +  C(p-1)\int_{B_{1}}|y'|^{2\sigma}|v_2|^{p-2}\,dy'.
\end{align*}
Then by H\"older's inequality and $\sigma > 0$,
\begin{equation}\label{Holder1}
\begin{aligned}
\int_{B_{1}}|y'|^{2\sigma}|v_2|^{p-2}\,dy' \le& \left( \int_{B_{1}}|y'|^{\sigma(n+1+2\varrho) - (n-1 + 2\varrho)} \, dy' \right)^{\frac{2}{n+1+2\varrho}} \times \\
& \times \left( \int_{B_{1}} |y'|^2 |v_2|^{(p-2) \frac{n+1+2\varrho}{n-1+2\varrho}}  \, dy'\right)^{\frac{n-1+2\varrho}{n+1+2\varrho}},
\end{aligned}
\end{equation}
where $\varrho > 0$ is chosen sufficiently small so that
$$\int_{B_{1}}|y'|^{\sigma(n+1+2\varrho)/2 - (n-1 + 2\varrho)} \, dy' < \infty.$$
To estimate the last term of \eqref{energy}, we have
\begin{align*}
\left| \int_{B_{1}} G v_2 |v_2|^{p-2} \, dy' \right| \le& \int_{B_{1}}|y'|^{\sigma-1}(\varepsilon + |y'|^2)^{1-s} |v_2|^{p-1}\, dy'\\
=& \frac{1}{n-1}  \int_{B_{1}}|y'|^{\sigma-1}(\varepsilon + |y'|^2)^{1-s} |v_2|^{p-1}(\nabla \cdot y')\, dy'\\
=& - \frac{\sigma-1}{n-1} \int_{B_{1}}|y'|^{\sigma-1}(\varepsilon + |y'|^2)^{1-s} |v_2|^{p-1}\, dy'\\
&- \frac{2(1-s)}{n-1} \int_{B_{1}}|y'|^{\sigma+1}(\varepsilon + |y'|^2)^{-s} |v_2|^{p-1}\, dy'\\
&-\frac{p-1}{n-1} \int_{B_{1}}|y'|^{\sigma-1}(\varepsilon + |y'|^2)^{1-s} (y' \cdot \nabla v_2) |v_2|^{p-3} v_2 \, dy.
\end{align*}
Therefore,
\begin{align*}
&\int_{B_{1}}|y'|^{\sigma-1}(\varepsilon + |y'|^2)^{1-s} |v_2|^{p-1}\, dy'\\
\le & C(p-1)\int_{B_{1}} |y'|^{\sigma}(\varepsilon + |y'|^2)^{1-s} |\nabla v_2| |v_2|^{p-2}\, dy'\\
&+ C \int_{B_{1}}|y'|^{\sigma+1}(\varepsilon + |y'|^2)^{-s} |v_2|^{p-1}\, dy'\\
\le &\frac{(p-1)}{4}\int_{B_{1}}(\varepsilon + |y'|^2) |\nabla v_2|^{2}|v_2|^{p-2}\,dy'\\
&+  C(p-1)\int_{B_{1}}|y'|^{2\sigma}|v_2|^{p-2}\,dy' + C\int_{B_{1}}|y'|^{\sigma}|v_2|^{p-1}\,dy',
\end{align*}
where we used $\varepsilon < 1$ in the second inequality. The first term in the last line can be controlled as in \eqref{Holder1}, and the second term can be estimated similarly:
\begin{align*}
\left| \int_{B_{1}}|y'|^{\sigma}|v_2|^{p-1}\,dy' \right| \le& \left( \int_{B_{1}}|y'|^{\sigma(n+1+2\varrho)/2 - (n-1 + 2\varrho)} \, dy' \right)^{\frac{2}{n+1+2\varrho}} \times \\
& \times \left( \int_{B_{1}} |y'|^2 |v_2|^{(p-1) \frac{n+1+2\varrho}{n-1+2\varrho}}  \, dy'\right)^{\frac{n-1+2\varrho}{n+1+2\varrho}}.
\end{align*}
Hence, from \eqref{energy}, we have
\begin{equation}
\begin{aligned}
\label{moser_inequality1}
&\frac{4(p-1)}{p^2} \int_{B_{1}} |y'|^2 \left| \nabla |v_2|^{\frac{p}{2}} \right|^2\,dy' = (p-1)\int_{B_{1}} |y'|^2 |\nabla v_2|^{2}|v_2|^{p-2}\,dy'\\
&\le C(p-1)  \|v_2^{p-2}\|_{L^{\frac{n+1+2\varrho}{n-1+2\varrho}}(B_{1}, |y'|^2dy')} + C \|v_2^{p-1}\|_{L^{\frac{n+1+2\varrho}{n-1+2\varrho}}(B_{1}, |y'|^2dy')}.
\end{aligned}
\end{equation}
We use the following version of the Caffarelli-Kohn-Nirenberg inequality (see \cite{CKN}):
\begin{equation}
\label{CKN_inequality}
 \|u\|_{L^{\frac{2(n+1)}{n-1}}(B_{1}, |y'|^2dy')} \le C  \|\nabla u\|_{L^{2}(B_{1}, |y'|^2dy')} \quad \forall u \in H_0^1(B_1, |y'|^2dy').
\end{equation}
Taking $p = 2$ in \eqref{moser_inequality1}, we have, by \eqref{CKN_inequality} with $u = |v_2|$ and H\"older's inequality,
\begin{equation}
\label{starting_point}
\|v_2\|_{L^{\frac{2(n+1+2\varrho)}{n-1+2\varrho}}(B_{1}, |y'|^2dy')} \le C.
\end{equation}
For $p \ge 2$, from \eqref{moser_inequality1}, by \eqref{CKN_inequality} with $u = |v_2|^{\frac{p}{2}}$ and H\"older's inequality,
\begin{align*}
&\|v_2\|_{L^{\frac{(n+1)p}{n-1}}(B_{1}, |y'|^2dy')}^p \le C\| \nabla |v_2|^{\frac{p}{2}} \|_{L^{2}(B_{1}, |y'|^2dy')}^{2}\\
&\le C p^2 \|v_2\|_{L^{\frac{n+1+2\varrho}{n-1+2\varrho}(p-2)}(B_{1}, |y'|^2dy')}^{p-2} + Cp\|v_2\|_{L^{\frac{n+1+2\varrho}{n-1+2\varrho}(p-1)}(B_{1}, |y'|^2dy')}^{p-1}\\
&\le \max_{i=1,2} Cp^i \|v_2\|_{L^{\frac{n+1+2\varrho}{n-1+2\varrho}p}(B_{1}, |y'|^2dy')}^{p-i}.
\end{align*}
By Young's inequality,
\begin{align*}
\|v_2\|_{L^{\frac{(n+1)p}{n-1}}(B_{1}, |y'|^2dy')} &\le \max_{i=1,2} (Cp^i)^{1/p} \left( \frac{p-i}{p} \|v_2\|_{L^{\frac{n+1+2\varrho}{n-1+2\varrho}p}(B_{1}, |y'|^2dy')} + \frac{i}{p} \right)\\
&\le (Cp^2)^{1/p} \left( \|v_2\|_{L^{\frac{n+1+2\varrho}{n-1+2\varrho}p}(B_{1}, |y'|^2dy')} + \frac{2}{p} \right).
\end{align*}
For $k \ge 0$, let
$$
p_k = 2 \left( \frac{n+1}{n-1} \cdot \frac{n-1+2\varrho}{n+1+2\varrho} \right)^k \frac{n+1+2\varrho}{n-1+2\varrho} \nearrow + \infty \quad \mbox{as}~k \to + \infty.
$$
Iterating the relations above, we have, by \eqref{starting_point},
\begin{equation}
\begin{aligned}
\label{Moser_iteration}
\|v_2\|_{L^{p_k}(B_{1}, |y'|^2dy')} &\le \prod_{i=0}^{k-1} \left( Cp_i^2 \right)^{3/p_i}\|v_2\|_{L^{p_0}(B_{1}, |y'|^2dy')}\\
&+ \sum_{i=0}^{k-1} \prod_{j=i}^{k-1} \left( Cp_{j}^2 \right)^{3/p_{j}}\frac{6}{p_i} \\
&\le C \|v_2\|_{L^{\frac{2(n-1+2\varrho)}{n-3+2\varrho}}(B_{1}, |y'|^2dy')} + C \sum_{i=0}^{k-1} \frac{1}{p_i} \le C,
\end{aligned}
\end{equation}
where $C$ is a positive constant depending on $n$ and $\sigma$, and is in particular independent of $k$. The lemma is concluded by taking $k \to \infty$ in \eqref{Moser_iteration}.
\end{proof}

Now we are in a position to prove Proposition \ref{prop_grad_v_bar_control}.

\begin{proof}[Proof of Proposition \ref{prop_grad_v_bar_control}] Without loss of generality, we assume that
$$
\|F\|_{\varepsilon, \sigma,s,B_{R_0}} + \|G\|_{\varepsilon, \sigma-1,s,B_{R_0}} +  \dashnorm{\bar v - (\bar{v})_{\partial B_{R_0}}}{L^2(\partial B_{R_0})} = 1.
$$
We denote
$$
\omega(\rho):= \dashnorm{\bar{v} - (\bar{v})_{\partial B_{\rho}}}{L^2(\partial B_\rho)}.
$$
For $0 < \rho \le R/2 \le R_0/2$, we write $\bar v=v_1+v_2$ in $B_R$, where $v_2$ satisfies
$$
\dv((\varepsilon+|y'|^2)\nabla v_2) - 2\gamma^{-1}v_2=\dv F + G\quad\text{in}\,\,B_R
$$
and $v_2=0$ on $\partial B_R$. Thus $v_1$ satisfies
$$
\dv((\varepsilon+|y'|^2)\nabla v_1)- 2\gamma^{-1}v_1=0\quad\text{in}\,\,B_R,
$$
and $v_1=\bar v$ on $\partial B_R$.
Since $\tilde v_2(y'):=v_2(Ry')$ satisfies
$$
\dv((R^{-2}\varepsilon+|y'|^2)\nabla \tilde v_2) - 2\gamma^{-1}\tilde v_2
=\dv \tilde F + \tilde G\quad\text{in}\,\,B_1,
$$
where $\tilde F(y'):=R^{-1}F(Ry')$ and $\tilde{G}(y') := G(Ry')$ satisfy
\begin{align*}
\|\tilde{F}\|_{R^{-2}\varepsilon,\sigma,s,B_1} &= R^{1+ \sigma - 2s} \|F\|_{\varepsilon, \sigma,s,B_{R}}, \\
\|\tilde G\|_{R^{-2}\varepsilon, \sigma-1,s,B_{1}} &= R^{1+ \sigma - 2s} \|G\|_{\varepsilon, \sigma-1,s,B_{R}},
\end{align*}
we apply Lemma \ref{lemma_v2} to $\tilde{v}_2$ with $\varepsilon$ replaced with $R^{-2}\varepsilon$ to obtain
\begin{equation}
\label{v2_control}
\|v_2\|_{L^\infty(B_R)}\le CR^{1+\sigma-2s}.
\end{equation}
By Lemma \ref{lemma_v1},
\begin{equation}
\label{v1_control}
\left( \fint_{\partial B_\rho} |v_1(y') - (v_1)_{\partial B_\rho}|^2 \, dS \right)^{\frac{1}{2}} \le \left(\frac{\rho}{R} \right)^{\widehat\alpha} \left( \fint_{\partial B_R} |v_1(y') - (v_1)_{\partial B_R}|^2 \, dS \right)^{\frac{1}{2}}.
\end{equation}
Combining \eqref{v1_control} and \eqref{v2_control} yields, using $\bar{v} = v_1$ on $\partial B_R$ and $\bar{v} = v_1 + v_2$,
\begin{equation}
\begin{aligned}
\label{omega_iteration}
\omega(\rho)&\le \left( \fint_{\partial B_\rho} |v_1(y') - (v_1)_{\partial B_\rho}|^2 \, dS \right)^{\frac{1}{2}} + \left( \fint_{\partial B_\rho} |v_2(y') - (v_2)_{\partial B_\rho}|^2 \, dS \right)^{\frac{1}{2}}\\
&\le \left(\frac{\rho}{R} \right)^{\widehat\alpha} \left( \fint_{\partial B_R} |v_1(y') -  (v_1)_{\partial B_R}|^2 \, dS \right)^{\frac{1}{2}} + 2\|v_2\|_{L^\infty(B_R)}\\
&\le \left(\frac{\rho}{R} \right)^{\widehat\alpha} \omega(R) + CR^{1+ \sigma - 2s}.
\end{aligned}
\end{equation}
For a positive integer $k$, we take $\rho = 2^{-i-1}$ and $R = 2^{-i}$ in \eqref{omega_iteration}, and iterate from $i = 0$ to $k-1$. We have, using $1 + \sigma - 2s \neq \widehat\alpha$,
\begin{align*}
\omega(2^{-k}) &\le 2^{-k\widehat\alpha} \omega(1) + C\sum_{i=1}^k 2^{-(k-i)\widehat\alpha} (2^{1-i})^{1+\sigma - 2s}\\
&\le 2^{-k\widehat\alpha} \omega(1) + C2^{-k\widehat\alpha}  \frac{1 - 2^{k(\widehat\alpha -1-\sigma+2s)}}{1 - 2^{\widehat\alpha - 1 - \sigma + 2s}}.
\end{align*}
It follows that
$$
\omega(2^{-k}) \le 2^{-k\tilde\alpha} \left( \omega(1) + C\right),
$$
where $\tilde{\alpha} = \min\{\widehat{\alpha}, 1 + \sigma - 2s \}$.
For any $\rho \in (0, 1/2)$, let $k$ be the integer such that $2^{-k-1} < \rho \le 2^{-k}$. Then
\begin{equation*}
\omega(\rho) \le C \rho^{\tilde\alpha}, \quad \forall \rho \in (0, 1/2).
\end{equation*}
Therefore, \eqref{v_bar_L2_sphere} is proved.
\end{proof}

\begin{proof}[Proof of Theorem \ref{thm_upperbound}]
Without loss of generality, we assume that $\|u\|_{L^\infty(\Omega_{R_0})} = 1$. Recall that $\mu = 1$. We will prove \eqref{grad_u_upperbound1} and \eqref{grad_u_upperbound2} with $\alpha$ given as in \eqref{alpha_new}. We make the change of variables \eqref{x_to_y}, and let $v(y) = u(x)$. Then $v$ satisfies \eqref{equation_v}. Let $\bar{v}$ be the vertical average defined as in \eqref{v_bar_def}. By \eqref{gradient-1/2} with $U_1 = U_2 = 0$, 
\begin{equation}\label{grad_v_bar_start}
\|\nabla \bar{v}\|_{\varepsilon,0,s_0+1,B_{R_0}} \le C,
\end{equation}
where $s_0 = \frac{1}{2}$. On the other hand, since $\partial_\nu u =  - \gamma^{-1}u$ on $\Gamma_+$ and $\Gamma_-$, we have, by
\eqref{fg_0}, \eqref{u_C1_outside}, and \eqref{gradient-1/2} with $U_1 = U_2 = 0$ that
$$|\partial_n u(x)| \le C\left| \sum_{i = 1}^{n-1}x_i \partial_i u \right| + C|u| \le C, \quad \forall x \in \Gamma_+ \cup \Gamma_-.$$
By the harmonicity of $\partial_n u$, the estimate \eqref{u_C1_outside}, and the maximum principle,
\begin{equation*}
|\partial_n u| \le C \quad \mbox{in}~~\Omega_{R_0},
\end{equation*}
and consequently,
\begin{equation}
\label{grad_n_v_bound}
|\partial_n v| \le C(\varepsilon+|y'|^2)/\varepsilon \quad \mbox{in}~~Q_{R_0,\varepsilon},
\end{equation}
where $Q_{R_0,\varepsilon}$ is defined as in \eqref{Q_s_t}. On the other hand, $e^i$ in \eqref{a_ij_formula} can be bounded by $C|y'|^{2+\sigma}$. Therefore, by Lemma \ref{lemma_reduction}, \eqref{grad_v_bar_start}, and \eqref{grad_n_v_bound}, $\bar{v}$ satisfies the equation \eqref{equation_v_bar} with $\mu = 1$, $F,G$ satisfying
$$\|F\|_{\varepsilon, \sigma, s_0, B_{R_0}} + \|G\|_{\varepsilon, \sigma-1, s_0, B_{R_0}} \le C.$$
By \eqref{grad_n_v_bound},
\begin{equation}
\label{v-v_bar}
|v(y', y_n) - \bar{v}(y')| \le 2\varepsilon \max_{y_n\in (- \varepsilon , \varepsilon)} |\partial_n v(y',y_n)| \le C (\varepsilon + |y'|^2) \quad \mbox{in}~~Q_{R_0,\varepsilon}.
\end{equation}
By decreasing $\sigma$ if necessary, we may assume that $1+\sigma-2s_0=\sigma<\widehat\alpha$, where $\widehat{\alpha} = \min\{\alpha , 1\}$. Since $u$ is odd in $x_j$ for some $1 \le j \le n-1$, $\bar{v}$ is also odd in $x_j$. In particular, this implies
$$
(\bar{v})_{\partial B_R} = 0 \quad \forall R \in(0,R_0).
$$
By Proposition \ref{prop_grad_v_bar_control} and \eqref{v-v_bar}, we have
\begin{equation}
\begin{aligned}\label{v_L2_control}
\dashnorm{u}{L^2(\Omega_{2\varepsilon^{1/2}})} \le& C \dashnorm{v}{L^2(Q_{2\varepsilon^{1/2} , \varepsilon})} \\
 \le& C( \dashnorm{v-\bar{v}}{L^2(Q_{2\varepsilon^{1/2} , \varepsilon})} + \dashnorm{\bar v}{L^2(Q_{2\varepsilon^{1/2} , \varepsilon})})\\
 \le& C \varepsilon^{\tilde\alpha/2},
\end{aligned}
\end{equation}
where $\tilde{\alpha} = \min\{\widehat\alpha , 1+ \sigma - 2s_0 \} = \sigma$. By \eqref{v_L2_control} and \eqref{gradient-1/2} with $U_1 = U_2 = 0$, we have
$$
\| \nabla u\|_{L^\infty (\Omega_{\varepsilon^{1/2}})} \le C\varepsilon^{\frac{\tilde\alpha-1}{2}}.
$$
Similarly, for any $R \in (\varepsilon^{1/2}, R_0/4)$, Proposition \ref{prop_grad_v_bar_control} and \eqref{v-v_bar} implies
$$
\dashnorm{u}{L^2(\Omega_{4R}\setminus \Omega_{R/2})} \le C R^{\tilde{\alpha}}, 
$$
which implies, by \eqref{gradient-1/2} with $U_1 = U_2 = 0$,
$$
\| \nabla u\|_{L^\infty (\Omega_{4R}\setminus \Omega_{R/2})} \le CR^{\tilde\alpha-1}
$$
for any $R \in (\varepsilon^{1/2}, R_0/4)$. Therefore, we have improved the upper bound $|\nabla u(x)| \le C(\varepsilon + |x'|^2)^{-s_0}$ to $|\nabla u(x)| \le C(\varepsilon + |x'|^2)^{\frac{\tilde{\alpha}-1}{2}}$, where $\frac{\tilde{\alpha}-1}{2} = \min\left\{0,\frac{\alpha - 1}{2}, -s_0 + \frac{\sigma}{2} \right\}$. If $-s_0 + \frac{\sigma}{2} <\min\left\{0, \frac{\alpha - 1}{2}\right\}$, we take $s_1 = s_0 - \frac{\sigma}{2}$ and repeat the argument above. We may decrease $\sigma$ if necessary so that $\min\left\{0,\frac{\alpha - 1}{2}\right\}\neq -s_0 +k\frac \sigma 2$ for any $k=1,2,\ldots$.
After repeating the argument finite times, we obtain the estimate \eqref{grad_u_upperbound1} or \eqref{grad_u_upperbound2}.
\end{proof}

\section{Proof of Theorem \ref{main_thm}}\label{sec6}
In this section, we prove Theorem \ref{main_thm}. Without loss of generality, we assume $\mu = 1$ again throughout this section. The following ordinary differential operator
\begin{equation}\label{def_L}
Lh :=h''(r) + \left( \frac{n-2}{r} + \frac{2r}{\varepsilon + r^2} \right) h'(r) - \left( \frac{n-2}{r^2} + \frac{2}{\gamma} \frac{1}{\varepsilon+r^2} \right)  h(r)
\end{equation}
will play an important role in our proof. First, we prove the following ODE lemma.

\begin{lemma}
\label{ODE_lemma}
For $\varepsilon > 0$, $\gamma > 1$, $n \ge 2$, let $L$ be the operator defined as in \eqref{def_L}. There exists a unique solution $h \in C([0,1]) \cap C^\infty((0,1])$ of
\begin{equation}
\label{Homogeneous_ODE}
Lh = 0, \quad 0 < r <1
\end{equation}
satisfying  $h(0) = 0$ and $h(1) = 1$.
Moreover, there exist a positive constant $C(\varepsilon)$ depending only on $n$, $\gamma$, and $\varepsilon$, such that
\begin{equation}
\label{h_bounds}
r  < h(r) < \min\{ C(\varepsilon)r, r^\alpha\} \quad \mbox{for}~~0 < r < 1,
\end{equation}
where $\alpha$ is given by \eqref{alpha_new} and $h$ is strictly increasing in $[0,1]$.
\end{lemma}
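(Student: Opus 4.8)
The plan is to combine Frobenius theory near the regular singular point $r=0$ with the maximum principle for $L$, whose zeroth-order coefficient $-\big(\frac{n-2}{r^2}+\frac{2}{\gamma(\varepsilon+r^2)}\big)$ is nonpositive, together with the two explicit barriers $r$ and $r^{\alpha}$. Note first that $L$ is exactly the operator $L_1$ appearing in the proof of Lemma~\ref{lemma_v1}, so the computations made there are available.

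First, for existence, uniqueness and the behaviour at the origin: when $n\ge3$ the point $r=0$ is a regular singular point of $Lh=0$ with indicial equation $s(s-1)+(n-2)s-(n-2)=(s-1)(s+n-2)=0$, hence indicial roots $s=1$ and $s=2-n$; when $n=2$ the point $r=0$ is an ordinary point. In either case the solution space on $(0,1]$ is two-dimensional, and up to a scalar multiple there is exactly one solution $h_1$ that stays bounded as $r\to0^{+}$ — the Frobenius solution attached to the root $s=1$ when $n\ge3$, or an analytic solution with $h_1(0)=0$ when $n=2$ — and it satisfies $h_1(r)=c_0r+o(r)$ as $r\to0^{+}$ with $c_0\ne0$; every solution independent of $h_1$ blows up like $r^{2-n}$ (for $n\ge3$) or is nonzero at the origin (for $n=2$). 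Consequently, the condition $h(0)=0$ forces $h$ to be a scalar multiple of $h_1$. A short maximum-principle argument on $[\delta,1]$, letting $\delta\to0$, shows that a nontrivial $h_1$ cannot vanish on $(0,1]$ (an interior zero would force $h_1\equiv0$), so $h_1(1)\ne0$; after possibly replacing $h_1$ by $-h_1$ we may assume $h_1>0$ on $(0,1]$, and then $h:=h_1/h_1(1)$ is the required solution. Uniqueness follows by applying the same maximum principle to the difference of two solutions with the prescribed data. Smoothness on $(0,1]$ is automatic from the smoothness of the coefficients there, and $h\in C([0,1])$ follows from $h(r)=O(r)$.

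Next, for the quantitative bounds and monotonicity: a direct computation gives
$$
Lr=\Big(1-\frac1\gamma\Big)\frac{2r}{\varepsilon+r^2}>0,\qquad
Lr^{\alpha}=-2\varepsilon\Big(\alpha-\frac1\gamma\Big)\frac{r^{\alpha-2}}{\varepsilon+r^2}<0
\quad\text{on }(0,1),
$$
where the first sign uses $\gamma>1$, the identity for $Lr^{\alpha}$ is the one obtained in Case~2 of the proof of Lemma~\ref{lemma_v1} with $k=1$, and $\alpha-\frac1\gamma=-\frac12(\alpha+n-2)(\alpha-1)>0$ because $\gamma>1$ forces $\alpha<1$ (see the discussion after \eqref{alpha_n}). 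Thus $r$ is a strict subsolution and $r^{\alpha}$ a strict supersolution of $Lh=0$; since $h-r$ and $r^{\alpha}-h$ both satisfy $L(\cdot)\le0$, vanish at $r=1$, and tend to $0$ as $r\to0^{+}$, the maximum principle on $[\delta,1]$ with $\delta\to0$, together with the strong maximum principle to exclude interior contact, yields $r<h<r^{\alpha}$ on $(0,1)$. For the bound $h<C(\varepsilon)r$ I would use that $h(r)/r$ is continuous on $(0,1]$ and, by the previous step, extends continuously to $r=0$ with a finite positive value, hence is bounded on $[0,1]$; taking $C(\varepsilon):=1+\sup_{(0,1]}h(r)/r$ gives $h(r)<C(\varepsilon)r$ on $(0,1)$. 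Finally, writing $L$ in divergence form with integrating factor $p(r):=r^{n-2}(\varepsilon+r^2)$, the equation becomes $(ph')'=p\big(\frac{n-2}{r^2}+\frac{2}{\gamma(\varepsilon+r^2)}\big)h>0$ on $(0,1)$, using $h>0$; hence $ph'$ is strictly increasing on $(0,1]$, and since $p(r)h'(r)\to0$ as $r\to0^{+}$ when $n\ge3$ while $p(r)h'(r)\to\varepsilon c_0/h_1(1)>0$ when $n=2$, we get $ph'>0$, hence $h'>0$, on $(0,1]$; combined with $h(0)=0$ this shows $h$ is strictly increasing on $[0,1]$.

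The main obstacle is the first step: one has to extract carefully from Frobenius theory that the space of solutions vanishing at the origin is one-dimensional and that such a solution is comparable to $r$ near $0$, and to treat $n=2$ separately since there $r=0$ is an ordinary point rather than a regular singular point. Once this structural information and the two barriers (already computed for Lemma~\ref{lemma_v1}) are in hand, all the remaining assertions reduce to routine applications of the maximum principle.
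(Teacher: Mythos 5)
Your proof is correct, but it follows a genuinely different route from the paper at every step, so it is worth spelling out the contrast.

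For existence, uniqueness, and the behaviour near $r=0$, you invoke Frobenius theory at the regular singular point (for $n\ge3$) resp.\ the ordinary point (for $n=2$): the indicial roots are $1$ and $2-n$, the bounded solutions form a one-dimensional space spanned by an $h_1\sim r$, and normalizing $h_1$ produces the unique $h$. The paper instead constructs $h$ by solving on $[a,1]$ with $h_a(a)=a$, $h_a(1)=1$, trapping $h_a$ between the barriers $r$ and $r^{\alpha}$, and passing to a limit along a subsequence as $a\to0$; uniqueness is then obtained by reduction of order, writing $g=hw$ so that $(Gw')'=0$ with $G=h^2r^{n-2}(\varepsilon+r^2)$ and showing that the second fundamental solution of that first-order equation is incompatible with $g(0)=0$. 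Your route is shorter but imports the structure theory of ODEs with regular singular points (including the integer-difference-of-roots case, where log corrections can appear, though, as you note, they do not affect the conclusion). The paper's route is more self-contained and more robust, since it uses only the maximum principle and would survive a loss of analyticity of the coefficients. For the upper bound $h(r)<C(\varepsilon)r$, you observe that $h(r)/r$ extends continuously to $[0,1]$ via the Frobenius expansion and is therefore bounded, which is cleaner but non-constructive; the paper instead constructs an explicit local barrier $v=r-r^{3/2}/2$ with $Lv<0$ on $(0,r_0(\varepsilon))$ and compares. For the monotonicity, you put the equation in divergence form $(ph')'=(\text{nonnegative})\cdot h>0$ with $p=r^{n-2}(\varepsilon+r^2)$, use $ph'\to0$ (for $n\ge3$) or $ph'\to\varepsilon\,h'(0)>0$ (for $n=2$) to conclude $ph'>0$; the paper argues by contradiction at a putative interior critical point $r_*$ where $h'(r_*)=0$, $h''(r_*)\le0$, which would give $Lh(r_*)<0$ since $h(r_*)>0$. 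Both arguments are valid.

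One small caution: your Frobenius step for $n\ge3$ should be stated carefully (the indicial roots differ by the positive integer $n-1$, so the second fundamental solution may carry a $\log r$ factor on top of $r^{2-n}$, and you also want to check that the coefficients $rP(r)$, $r^2Q(r)$ are analytic at $0$, which uses $\varepsilon>0$), and your maximum-principle argument that $h_1$ cannot vanish on $(0,1]$ should be phrased to yield a contradiction (an interior zero would give a positive interior max with value $>0$, violating the weak maximum principle as $\delta\to0$) rather than $h_1\equiv0$. Neither of these affects the correctness of the argument.
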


\begin{proof}
Because of the singularity of $L$ at $r = 0$, we will construct the solution $h$ through the following approximating process. For $0 < a < 1$, let $h_a \in C^2([a,1])$ be the solution of $Lh_a = 0$ in $(a, 1)$ satisfying $h_a(a) = a$ and $h_a(1) = 1$. By similar computations as in Lemma \ref{lemma_v1}, we have
\begin{equation*}
Lr = \frac{2r}{\varepsilon + r^2} \left( 1 - \frac{1}{\gamma} \right) > 0 \quad\quad \mbox{for}~r\in(0,1),
\end{equation*}
and
\begin{equation}\label{Lr^alpha}
\begin{aligned}
Lr^\alpha =  - 2 \varepsilon \left( \alpha - \frac{1}{\gamma} \right) \frac{ r^{\alpha - 2}}{\varepsilon+r^2} < 0  \quad\quad\mbox{for}~r\in(0,1).
\end{aligned}
\end{equation}
Note that $h_a(a)=a<a^\alpha$ since $\alpha\in(0,1)$.
By the maximum principle and the strong maximum principle,
$$
r < h_a(r) < r^\alpha, \quad a < r < 1.
$$
Sending $a \to 0$ along a subsequence, $h_a \to h$ in $C^2_{\text{loc}}((0,1])$ for some $h \in C([0,1]) \cap C^\infty((0,1])$ satisfying $r \le h(r) \le r^\alpha$, $Lh = 0$ in $(0,1)$, and $h(0) = 0$. By the strong maximum principle,
$$
r < h(r) < r^\alpha, \quad 0 < r < 1.
$$
Next, we show the upper bound $h(r) < C(\varepsilon)r$. Let $v = r - r^{3/2}/2$, by a direct computation,
$$
\left| Lv + \frac{1}{4} \Big( n - \frac{1}{2} \Big) r^{-\frac{1}{2}}  \right| \le \frac{2}{\varepsilon} \Big( 1-\frac{1}{\gamma} \Big) r + \frac{1}{\varepsilon} \Big( 3-\frac{2}{\gamma} \Big) r^{\frac{3}{2}}.
$$
Hence $Lv < 0$ in $(0, r_0(\varepsilon))$, for some small $r_0(\varepsilon)$. Recall that $Lh = 0$ and $h < r^\alpha$ in $(0, r_0(\varepsilon))$, $h(0) = v(0) = 0$. By the maximum principle, we have 
$$
h \le \frac{h(r_0(\varepsilon))}{v(r_0(\varepsilon))} v \le C(\varepsilon)r\quad \text{in}\,\, (0,r_0(\varepsilon)),
$$ 
where $C(\varepsilon) = \frac{r_0^\alpha(\varepsilon)}{v(r_0(\varepsilon))}$. This concludes the proof of \eqref{h_bounds}.

Now, we show the uniqueness of $h$. Let $g\in C([0,1]) \cap C^\infty((0,1])$ be another solution of \eqref{Homogeneous_ODE} in $(0,1)$ satisfying $g(0) = 0$ and $g(1) = 1$. Then $w(r) := g(r) / h(r)$ satisfies
$$
(G w')' = 0, \quad 0<r<1,
$$
where $G = h^2 r^{n-2}(\varepsilon + r^2)$. Therefore, for some constants $C_0$ and $C_1$, we have
$$
g(r) = h(r) w(r) = h(r) \int_{r}^1 \frac{C_0}{h^2(s) s^{n-2} (\varepsilon + s^2)} \, ds + C_1 h(r), \quad 0 < r <  1.
$$
By \eqref{h_bounds}, we have
$$
h(r) \int_{r}^1 \frac{1}{h^2(s) s^{n-2} (\varepsilon + s^2)} \, ds \ge  \frac{r}{C(\varepsilon)^2} \int_{r}^1 \frac{1}{s^{n} (\varepsilon + s^2)} \, ds.
$$
When $n = 2$, by the L'Hospital rule,
$$
\frac{r}{C(\varepsilon)^2} \int_{r}^1 \frac{1}{s^{2} (\varepsilon + s^2)} \, ds = \frac{r}{C(\varepsilon)^2\varepsilon}\int_{r}^1 \Big( \frac{1}{s^2} - \frac{1}{\varepsilon + s^2} \Big) \, ds \to \frac{1}{C(\varepsilon)^2 \varepsilon} > 0
$$
as $r \to 0$. When $n \ge 3$, 
$$
\frac{r}{C(\varepsilon)^2} \int_{r}^1 \frac{1}{s^{n} (\varepsilon + s^2)} \, ds  \to + \infty
$$
as $r \to 0$. Since $h(0) = g(0) = 0$ and $h(1) = g(1) = 1$, we have $C_0 = 0$, $C_1 = 1$. Therefore, $g \equiv h$.

Finally, we show that $h$ is strictly increasing in $(0,1)$. If not, there exists an $r_* \in (0,1)$ such that $h'(r_*) = 0$ and $h''(r_*) \le 0$. Since $h(r_*) > 0$, we have $Lh(r_*) < 0$, which leads to a contradiction.
\end{proof}  

Next, we prove a lower bound of $h$ by constructing a subsolution.

\begin{lemma}\label{lemma_subsolution}
For $\varepsilon > 0$, $\gamma > 1$, $n \ge 2$, let $h \in C([0,1]) \cap C^\infty((0,1])$ be the solution to \eqref{Homogeneous_ODE}. There exists a positive constant $c$, depending only on $n$ and $\gamma$, such that
\begin{equation}\label{h_lowerbound}
h(r) > \Big(r^\alpha - (c\sqrt\varepsilon)^{\alpha} \Big)_+, \quad 0 < r <1,
\end{equation}
where $\alpha$ is given by \eqref{alpha_new}.
\end{lemma}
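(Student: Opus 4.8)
The plan is to construct an explicit subsolution of the operator $L$ on the interval $(\delta,1)$, where $\delta:=c\sqrt{\varepsilon}$ and $c=c(n,\gamma)>0$ is to be chosen, and then compare it with $h$ via the maximum principle. Since $h(r)>0$ on $(0,1)$ by Lemma~\ref{ODE_lemma}, the inequality \eqref{h_lowerbound} is trivial whenever $\delta\ge 1$ (its right-hand side then vanishes on $(0,1)$), so I would assume $\delta<1$ from now on.

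The natural candidate is $w(r):=r^{\alpha}-\delta^{\alpha}$, which vanishes at $r=\delta$ and differs from the supersolution $r^{\alpha}$ only by a constant. Using the identity \eqref{Lr^alpha} for $Lr^{\alpha}$ and the fact that $L$ applied to the constant $\delta^{\alpha}$ equals $-\big(\tfrac{n-2}{r^{2}}+\tfrac{2}{\gamma(\varepsilon+r^{2})}\big)\delta^{\alpha}$, I would compute
\[
Lw=-2\varepsilon\Big(\alpha-\tfrac{1}{\gamma}\Big)\frac{r^{\alpha-2}}{\varepsilon+r^{2}}+\Big(\frac{n-2}{r^{2}}+\frac{2}{\gamma(\varepsilon+r^{2})}\Big)\delta^{\alpha}\qquad\text{in }(\delta,1).
\]
Discarding the nonnegative term $\tfrac{n-2}{r^{2}}\delta^{\alpha}$, using $\alpha-\tfrac{1}{\gamma}<\alpha$, and bounding $r^{\alpha-2}\le\delta^{\alpha-2}$ for $r\ge\delta$ (valid since $\alpha<1$, because $\gamma>1$), it suffices to have $\delta^{\alpha}/\gamma\ge\varepsilon\alpha\,\delta^{\alpha-2}$, i.e.\ $\delta^{2}\ge\gamma\alpha\,\varepsilon$. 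Taking $c:=\sqrt{\gamma\alpha}$, which depends only on $n$ and $\gamma$, this holds, and in fact the strict inequality $\alpha-\tfrac{1}{\gamma}<\alpha$ yields $Lw>0$ in $(\delta,1)$.

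I would then set $\psi:=h-w$ on $[\delta,1]$, so that $L\psi=-Lw<0$ in $(\delta,1)$, while $\psi(\delta)=h(\delta)>0$ and $\psi(1)=1-(1-\delta^{\alpha})=\delta^{\alpha}>0$. Because the zeroth-order coefficient of $L$, namely $-\big(\tfrac{n-2}{r^{2}}+\tfrac{2}{\gamma(\varepsilon+r^{2})}\big)$, is negative for every $n\ge2$ (so the case $n=2$ needs no separate treatment), the weak maximum principle applied to the supersolution $\psi$ gives $\psi\ge0$ in $(\delta,1)$, and the strong maximum principle together with the strictly positive boundary values upgrades this to $\psi>0$ in $(\delta,1)$. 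Hence $h(r)>r^{\alpha}-\delta^{\alpha}$ for $\delta<r<1$, and combined with $h(r)>0\ge(r^{\alpha}-\delta^{\alpha})_{+}$ for $0<r\le\delta$ this is precisely \eqref{h_lowerbound}.

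The only delicate point — and the step I would handle most carefully — is the sign bookkeeping in $Lw$ and the observation that the ``bad'' term $Lr^{\alpha}$, which has size $O(\varepsilon)$, is absorbed near $r=\delta$ by the zeroth-order contribution $\tfrac{2}{\gamma(\varepsilon+r^{2})}\delta^{\alpha}$; this absorption is exactly what forces the threshold scale $\delta\sim\sqrt{\varepsilon}$ and hence the shift by $(c\sqrt{\varepsilon})^{\alpha}$ in \eqref{h_lowerbound}. Everything else — the choice of $c$, the maximum-principle comparison, and the trivial regime $c\sqrt{\varepsilon}\ge1$ — is routine.
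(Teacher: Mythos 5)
Your proof is correct and follows essentially the same approach as the paper: take $w(r)=r^{\alpha}-(c\sqrt{\varepsilon})^{\alpha}$ on $(c\sqrt{\varepsilon},1)$, use \eqref{Lr^alpha} and $L(\mathrm{const})$ to show $Lw>0$ for a suitable $c=c(n,\gamma)$, and conclude by the maximum principle. The only cosmetic difference is that you discard the nonnegative $(n-2)r^{-2}(c\sqrt{\varepsilon})^{\alpha}$ term and absorb the error solely with the $\frac{2}{\gamma(\varepsilon+r^{2})}$ contribution (giving $c=\sqrt{\gamma\alpha}$), whereas the paper keeps it and arrives at the sufficient condition $c^{2}(n-2+2/\gamma)>2\alpha-2/\gamma$; both choices of $c$ depend only on $n$ and $\gamma$ and yield the same conclusion.
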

\begin{proof}
Denote $v(r) = \Big(r^\alpha - (c\sqrt\varepsilon)^{\alpha} \Big)_+$ for some positive constant $c$ to be determined. When $r > c\sqrt\varepsilon$, by \eqref{Lr^alpha}, we know
\begin{align*}
 Lv =& - 2 \varepsilon \left( \alpha - \frac{1}{\gamma} \right) \frac{ r^{\alpha - 2}}{\varepsilon+r^2} + \left( \frac{n-2}{r^2} + \frac{2}{\gamma} \frac{1}{\varepsilon+r^2} \right) (c\sqrt\varepsilon)^{\alpha} \\
 \ge & \frac{c^{\alpha-2}\varepsilon^{\frac{\alpha}{2}}}{\varepsilon + r^2} \left( -2\alpha + \frac{2}{\gamma} + c^2\left( n-2 + \frac{2}{\gamma} \right) \right).
\end{align*}
Therefore, for any $\gamma > 1$, $n \ge 2$, we can find a positive $c$ such that $Lv > 0$ when $r> c\sqrt\varepsilon$. Then \eqref{h_lowerbound} follows from the maximum principle.
\end{proof}

Now we are ready to prove Theorem \ref{main_thm}.

\begin{proof}[Proof of Theorem \ref{main_thm}]
Without loss of generality, we assume that $\mu = 1$. For general $\mu > 0$, we only need to replace $\varepsilon$ and $\gamma$ in the proof with $\varepsilon/\mu$ and $\mu\gamma$, respectively. We will prove \eqref{grad_u_lower_bound} for $n \ge 3$ in the following four steps. The proof for $n = 2$ is similar and simpler. A brief remark on the case $n=2$ will be provided at the end.

\textbf{Step 1.} By the elliptic theory, the fact that $\widetilde\Omega$ is symmetric in $x_1$, and the fact that $\varphi$ is odd in $x_1$, we know that $u$ is smooth, $|u| \le 5$, and $u$ is odd in $x_1$. Take the change of variables \eqref{x_to_y} in $\Omega_1$, let $v(y) = u(x)$ and $\bar{v}$ be the vertical average as in \eqref{v_bar_def}. By Lemma \ref{lemma_reduction}, we know that $\bar{v}$ satisfies the equation \eqref{equation_v_bar}
$$
\dv((\varepsilon+|y'|^2)\nabla \bar v) - 2\gamma^{-1} \bar{v}(y') = \dv F + G \quad \mbox{in}~B_1 \subset \bR^{n-1}.
$$
By \eqref{gradient-1/2} with $U_1 = U_2 = 0$ and \eqref{grad_n_v_bound}, we know that
$$
|\nabla_{y'} v(y)| \le C(\varepsilon +|y'|^2)^{-1/2}, \quad |\partial_n v(y)| \le C (\varepsilon + |y'|^2) \varepsilon^{ - 1}\quad \mbox{in}~ Q_{1,\varepsilon},
$$
where $Q_{1,\varepsilon}$ is defined in \eqref{Q_s_t}. Since both $D_1$ and $D_2$ are balls, $e^i$ from \eqref{a_ij_formula} can be bounded by $C|y'|^4$. Therefore by \eqref{FG_bound},  for $|y'| < 1$ we have
\begin{equation}\label{FG_bound_2}
\left\{
\begin{aligned}
|F(y')| &\le C\Big(|y'|(\varepsilon + |y'|^2) + |y'|^4 (\varepsilon +|y'|^2)^{-1/2}\Big), \\
 |G(y')| &\le C(\varepsilon + |y'|^2).
\end{aligned}\right.
\end{equation}
Similar to the proof of Lemma \ref{lemma_v1}, we denote $Y_{k,i}$ to be a $k$-th degree normalized spherical harmonics so that $\{Y_{k,i}\}_{k,i}$ forms an orthonormal basis of $L^2(\bS^{n-2})$, $Y_{1,1}$ to be the one after normalizing $\left.y_1\right|_{\bS^{n-2}}$, and we write $y' = (r ,\xi)$ in the polar coordinate. Since $\bar v$ is odd with respect to $y_1$, and in particular $\bar{v}(0) = 0$, we have the following decomposition
\begin{equation}
\label{v_bar_expansion}
\bar{v}(y') = V_{1,1}(r)Y_{1,1}(\xi) + \sum_{k=2}^\infty \sum_{i=1}^{N(k)} V_{k,i}(r)Y_{k,i}(\xi), \quad x' \in B_1\setminus\{0\},
\end{equation}
where $V_{k,i}(r) = \int_{\bS^{n-2}} \bar{v}(r,\xi) Y_{k,i}(\xi) \, d\xi$ and $V_{k,i} \in C([0,1)) \cap C^\infty((0,1))$. Since $\bar{v}(0) = 0$ and $\varepsilon + |y'|^2$ is independent of $\xi$, we know that $V_{1,1}$ satisfies $V_{1,1}(0) = 0$, and
\begin{equation*}
LV_{1,1} = H(r), \quad 0 < r <1,
\end{equation*}
where $L$ is the differential operator defined in \eqref{def_L},
\begin{align*}
H(r) &= \int_{\bS^{n-2}} \frac{(\dv F + G) Y_{1,1}(\xi)}{\varepsilon+r^2} \, d\xi = \int_{\bS^{n-2}} \frac{\partial_r F_r + \frac{1}{r} \nabla_\xi F_\xi + G}{\varepsilon + r^2} Y_{1,1}(\xi) \, d\xi\\
&= \partial_r \left(\int_{\bS^{n-2}} \frac{F_r}{\varepsilon + r^2} Y_{1,1}(\xi) \, d\xi \right) + \int_{\bS^{n-2}} \frac{2rF_r Y_{1,1}}{(\varepsilon+r^2)^2} - \frac{F_\xi \nabla_\xi Y_{1,1}}{r(\varepsilon + r^2)} + \frac{G Y_{1,1}}{\varepsilon+ r^2}\, d\xi\\
&=: A'(r) + B(r), \quad 0 < r < 1.
\end{align*}
Hence $A(r),B(r) \in C^1([0,1))$ satisfy, in view of \eqref{FG_bound_2}, that
\begin{equation}
\label{AB_bounds}
|A(r)| \le C(n,\gamma)r, \quad |B(r)| \le C(n,\gamma), \quad 0 < r < 1.
\end{equation}

\textbf{Step 2.}
We will prove, for some $\varepsilon$-dependent constant $C_1(\varepsilon)$ and positive $\varepsilon$-independent constant $C_2$, that
\begin{equation}
\label{V_11_formula}
|V_{1,1}(r) - C_1 (\varepsilon) h(r)| \le  C_2 r^{1+\alpha}, \quad 0<r<1,
\end{equation}
where $h(r)$ is the solution of \eqref{Homogeneous_ODE} satisfying $h(0)=h(1) = 1$. We use the method of reduction of order to write down a bounded function $g$ satisfying $Lg = H$ in $(0,1)$, and then give an estimate $|g(r)| \le C_2 r^{1+\alpha}$.

Let $g = h w$ and
$$
w(r) := \int_0^r\frac{1}{h^2(s) s^{n-2} (\varepsilon + s^2)} \int_0^s h(\tau) \tau^{n-2} (\varepsilon + \tau^2) H(\tau) \, d\tau ds, \quad 0 < r < 1.
$$
By a direct computation,
$$
Lg = L(hw) = h w'' + \left[ 2h' + \left( \frac{n-2}{r} + \frac{2r}{\varepsilon + r^2} \right)h \right]w' = \frac{h}{G}(Gw')' = H,
$$
where $G= h^2 r^{n-2}(\varepsilon + r^2)$. By integration by parts, \eqref{AB_bounds}, and the fact that $h > 0$ and $h' \ge 0$, we can estimate
\begin{align*}
 &\left| \int_0^s h(\tau) \tau^{n-2} (\varepsilon + \tau^2) A'(\tau) \, d\tau \right| + \left| \int_0^s h(\tau) \tau^{n-2} (\varepsilon + \tau^2) B(\tau) \, d\tau \right| \\
\le& \left|\int_0^s h' \tau^{n-2} (\varepsilon + \tau^2) A(\tau) \, d\tau \right| + C  h(s)s^{n-1}(\varepsilon + s^2)\\
\le & C s^{n-1}(\varepsilon + s^2) \int_{0}^s h'(\tau) \, d\tau+ C  h(s)s^{n-1}(\varepsilon + s^2)\\
\le & C  h(s)s^{n-1}(\varepsilon + s^2), \quad\quad  0 < r < 1.
\end{align*}
Therefore, using \eqref{h_bounds},
$$
|g(r)| \le C  h(r) \int_0^r \frac{s}{h(s)}\, ds \le  C_2 r^{1+\alpha}, \quad 0<r<1.
$$
Since $V_{1,1} - g$ is bounded and satisfies $L(V_{1,1} - g) = 0$ in $(0,1)$ and $V_{1,1}(0) - g(0) = 0$, by Lemma \ref{ODE_lemma}, there exists a constant $C_1(\varepsilon)$ such that $V_{1,1} - g = C_1(\varepsilon)h$. Hence \eqref{V_11_formula} follows.

\textbf{Step 3.} We will show that 
\begin{equation}\label{C_1_epsilon_lowerbound}
C_1 (\varepsilon) > \frac{1}{C}
\end{equation}
for some positive $\varepsilon$-independent constant $C$.

By a direct computation, $x_1 + \gamma \partial_\nu x_1 = (1-\gamma) x_1$ on $\partial D_1 \cup \partial D_2$. Therefore, $x_1$ is a subsolution of \eqref{robin} in $\{x_1 > 0\}$, and is a supersolution of \eqref{robin} in $\{x_1 < 0\}$. Hence, $u \ge x_1$ in $\{x_1 \ge 0\}$ and $u \le x_1$ in $\{x_1 \le 0\}$. Then
$$\left\{\begin{aligned}
&\bar{v}(y') \ge y_1, &&\mbox{when}~y_1\ge 0,\\ 
&\bar{v}(y') \le y_1, &&\mbox{when}~y_1\le 0.\\ 
\end{aligned}\right.
$$
Recall that $Y_{1,1}(\xi)$ is the normalization of $y_1$, we have
$$
V_{1,1}(r) = \int_{\bS^{n-2}}  \bar{v}(r,\xi) Y_{1,1}(\xi) \, d\xi \ge r, \quad 0 < r < 1.
$$
Combining with \eqref{h_bounds} and \eqref{V_11_formula}, we have
\begin{align*}
r \le V_{1,1}(r) \le C_1(\varepsilon)h(r) + \frac{1}{2}r \le C_1(\varepsilon)r^\alpha + \frac{1}{2}r \quad \forall 0 < r \le r_0,
\end{align*}
where $r_0 = (1/2C_2)^{1/\alpha}$. This implies
$$
C_1(\varepsilon) \ge \frac{1}{2}r_0^{1-\alpha}.
$$

\textbf{Step 4.} Completion of the proof of Theorem \ref{main_thm}.

It follows, in view of \eqref{h_bounds}, \eqref{V_11_formula} and \eqref{C_1_epsilon_lowerbound}, that
\begin{equation}
\label{V_11_lowerbound}
V_{1,1}(r) \ge \frac{1}{C}  h(r) - C_2 r^{1+\alpha} \ge \frac{1}{2C} h(r) , \quad 0 < r < r_1
\end{equation}
for some small $\varepsilon$-independent constant $r_1$. By \eqref{h_lowerbound},
\begin{equation}
\label{h1_lowerbound}
h(2c\sqrt{\varepsilon}) > (2^\alpha - 1)c^\alpha \varepsilon^{\alpha/2},
\end{equation}
where $c$ is the constant in Lemma \ref{lemma_subsolution}. By \eqref{v_bar_expansion}, \eqref{V_11_lowerbound}, and \eqref{h1_lowerbound}, we have
\begin{align*}
\left( \int_{\bS^{n-2}} |\bar{v}(2c\sqrt{\varepsilon},\xi)|^2 \, d\xi \right)^{1/2} &\ge |V_{1,1}(2c\sqrt{\varepsilon})| \ge \frac{1}{C} h(2c\sqrt{\varepsilon}) \ge \frac{1}{C}\varepsilon^{\alpha/2} \quad \forall 0 < \varepsilon < \varepsilon_0
\end{align*}
for some small positive $\varepsilon_0$ depending only on $n$ and $\gamma$. Then, there exists a $\xi_0 \in \bS^{n-2}$ such that $|\bar{v}(2c\sqrt{\varepsilon}, \xi_0)| \ge \frac{1}{C} \varepsilon^{\alpha/2}$. Since $\bar v$ is the vertical average of $v$, and recall that $v(y) = u(x)$, there exists an $x_n \in (-\varepsilon/2 + g(x'), \varepsilon/2 + f(x'))$ such that
\begin{equation}
\label{u_lower_bound}
|u(2c\sqrt{\varepsilon}, \xi_0, x_n)| \ge \frac{1}{C} \varepsilon^{\frac{\alpha}{2}}.
\end{equation}
Estimate \eqref{grad_u_lower_bound} follows from \eqref{u_lower_bound} and $u(0) = 0$. This concludes the proof when $n \ge 3$.

When $n = 2$, by Lemma \ref{lemma_reduction}, $\bar{v}$ satisfies the equation
$$
((\varepsilon+|y_1|^2)\bar v')'- 2\gamma^{-1} \bar{v} = F' + G\quad \mbox{in}~(-1,1),
$$
where $F$ and $G$ satisfy the same bounds as in \eqref{FG_bound_2}. By using $\bar{v}$ in place of $V_{1,1}$ and repeating steps 2-4 with minor modifications, we can establish the estimate \eqref{grad_u_lower_bound} for $n = 2$.
\end{proof}

\appendix

\section{}
In this appendix, we prove Lemma \ref{Change_of_variable_lemma}. The proof essentially follows that of \cite{DYZ23}*{Lemma 3.1}. 
\begin{proof}[Proof of Lemma \ref{Change_of_variable_lemma}.]
Following the proof of \cite{DYZ23}*{Lemma 3.1}, with \eqref{h_derivatives} in place of \cite{DYZ23}*{(3.9)}, for $y \in \overline{Q_{2r,r^2}}$, we have
$$
|D_{y'} x' - I_{(n-1)\times(n-1)}| \le Cr^2, \quad |D_{y_n} x'| \le Cr,  \quad |D_{y'}x_n| \le Cr.
$$
Since
\begin{align*}
D_{y_n} x_n = \frac{1}{2r^2} (\varepsilon + \tilde f(y') - \tilde g(y')).
\end{align*}
By \eqref{height_final}, for $y \in \overline{Q_{2r,r^2}}$, we also have
$$
\frac{1}{C} \le D_{y_n} x_n \le C.
$$
Then (a) follows by shrinking $r_0$ to be sufficiently small.

Since $h(y) = 0$ when $y = \pm r^2$, $\Phi$ maps the upper and lower boundaries of $Q_{2r, r^2}$ onto the upper and lower boundaries of $\Omega_{x_0,2r} $, respectively. Then (b) simply follows from the fact that $|h(y)| \le Cr^3$, and we can shrink $r_0$ so that $|h(y)| \le r/10$.

To verify (c), note that $u$ is smooth by the classical elliptic theory. By the chain rule, 
$$
\begin{aligned}
D_{x_k}u(x)&= D_{y_i}\tilde u(y) D_{x_k}y_i,\\
D^2_{x_k}u(x)&=D_{y_i}D_{y_j}\tilde u(y) D_{x_k}y_i D_{x_k}y_j + D_{y_i}\tilde u (y) D_{x_k}^2 y_i.
\end{aligned}
$$
For $i,j\in \{1,\ldots, n\}$,
we define $$a^{ij}:= D_{x} y_i\cdot D_{x} y_j, \quad  b^{i}:=\sum_{k=1}^n D^2_{x_k}y_i.$$
Then we have 
\begin{equation}\label{equiv:a}
    a:=(a^{ij})=(D\Phi)^{-1}((D\Phi)^{-1})^T,
\end{equation}
and $\tilde u=\tilde u(y)$ satisfies the equation
\begin{equation}\label{v_equation_1}
a^{ij}D_{y_iy_j}\tilde u+b^i D_{y_i}\tilde u=0   \quad \mbox{in }Q_{1.9r, r^2}.
\end{equation}
We then deduce the boundary condition for $\tilde u$ on $\{y_n = \pm r^2\}$. By the chain rule,
$$
D_{y_n} \tilde u(y)=D_{y} \tilde u(y) \cdot e_n = D_x u(x) \cdot D_y \Phi\,e_n,
$$
where $e_n:= (0,\ldots,0,1)^T$. 
Similar to the proof of \cite{DYZ23}*{Lemma 3.1}, using 
\begin{equation}\label{f=tildef=muf}
    f\equiv \tilde f\equiv\tilde f^\kappa, \quad g\equiv \tilde g \equiv \tilde g^\kappa \quad \mbox{on }  \{y_n=\pm r^2\}\cap \overline{Q_{2r, r^2}},
\end{equation}
we know that 
\begin{equation}\label{parallel}
\begin{aligned}
 D_y \Phi\,e_n&=\Big(-D_{y_n}h , \frac{1}{2r^2} (\varepsilon + \tilde f(y') - \tilde g(y')) 
 \Big)^T\\
 &=\frac{1}{2r^2} (\varepsilon + \tilde f(y') - \tilde g(y')) \Big(-D_{x'} \tilde f(y'),1\Big)^T
 \end{aligned}
\end{equation}
when $y_n=r^2$.
Therefore, $\tilde u$ satisfies 
\begin{equation}\label{bc_up}
    \tilde u+\gamma \big(\frac{1}{2r^2}(\varepsilon+\tilde f-\tilde g)\sqrt{1+|D_{x'}\tilde f|^2}\big)^{-1}D_{y_n}\tilde u=U_1\quad\mbox{on } \{y_n=r^2\}\cap \overline{Q_{1.9r, r^2}}.
\end{equation}
Similarly, $\tilde u$ also satisfies
\begin{equation}\label{bc_low}
\tilde u-\gamma \big(\frac{1}{2r^2}(\varepsilon+\tilde f-\tilde g)\sqrt{1+|D_{x'}\tilde g|^2}\big)^{-1}D_{y_n}\tilde u=-U_1 \quad\mbox{on } \{y_n=-r^2\}\cap \overline{Q_{1.9r, r^2}}.
\end{equation}
By \eqref{v_equation_1}, \eqref{bc_up}, \eqref{bc_low}, and \eqref{f=tildef=muf},
we know that $\tilde u$ is a solution to \eqref{tilde_u_equation}.

Next, we prove \eqref{est:a}.
From part (a), we have
$$
\frac{I}{C} \le  D_{x}y=(D\Phi)^{-1} \le C I,
$$
which directly implies that
\begin{equation*}
    \frac{I}{C} \le a \le C I,
\end{equation*}
where $a:=(a^{ij})_{n\times n}$.
By a similar proof as that of \cite{DYZ23}*{Lemma 3.1}, we know that 
$$
\Big|\frac{\partial^2 y_i}{\partial x_k \partial x_l} \Big|\le \frac{C}{r} \quad \mbox{for } i,k,l\in\{1,2,\ldots, n\}.
$$
The last three inequalities directly imply \eqref{est:a}.

Now, we prove \eqref{ortho}. 
Let $a^{-1}:=(c^{ij})_{n\times n}$. By \eqref{equiv:a}, 
$$a^{-1}= (D\Phi)^T D\Phi.$$
When $y_n=r^2$, we know that
$$\frac{\partial x'}{\partial y'}=I_{n-1},\quad \frac{\partial x'}{\partial y_n}=-D_{y_n}h, \quad \frac{\partial x_n}{\partial y'}=D_{y'}\tilde f, \quad \frac{\partial x_n}{\partial y_n}=\frac{1}{2r^2} (\varepsilon + \tilde f(y') - \tilde g(y')).$$
Therefore, by \eqref{parallel}, when $y_n= r^2$, for $j\in\{1,2,\ldots,n-1\}$, we have 
$$
\begin{aligned}
    c^{nj}=c^{jn}=(\det (D\Phi))^{-1}\Big(-D_{y_n}h^j+\frac{1}{2r^2} (\varepsilon + \tilde f(y') - \tilde g(y'))D_{y_j}\tilde f\Big)=0.
\end{aligned}
$$
Similarly, we also have
$$
 c^{nj}=c^{jn}=0 \quad \mbox{when } y_n=-r^2, \quad \mbox{for } j\in\{1,2,\ldots,n-1\}.
$$
Since $a^{-1}=(c^{ij})$, the equalities above imply \eqref{ortho}.

Finally, we prove \eqref{F_bound}.
Using \eqref{fg_0}--\eqref{def:c_2}, and \eqref{h_derivatives}, one can directly obtain 
$$
\frac{1}{C}\le F_i \le C, \quad |D_{y'} F_i|\le \frac{C}{r}.
$$
Similar to the proof of \cite{DYZ23}*{Lemma 3.1}, we have
\begin{equation}\label{f_higher}
\begin{aligned}
&|D_{y_n} D_{y'} \tilde{f}^\kappa(y')|\le Cr, \quad |D_{y'}^3 \tilde{f}^\kappa(y')|\le \frac{Cr}{r^4 - y_n^2},\\
&|D_{y_n}D_{y'}^2 \tilde{f}^\kappa (y')|\le \frac{Cr^2}{r^4 - y_n^2},\quad |D_{y_n}^2 D_{y'} \tilde{f}^\kappa (y')| 
\le \frac{Cr^3}{r^4 - y_n^2}.
\end{aligned}
\end{equation}
The inequalities above also hold with $\tilde{g}^\kappa$ in place of $\tilde{f}^\kappa$.
Direct computations using \eqref{h_derivatives} and \eqref{f_higher} yield 
$$
\begin{aligned}
&|D_{y_n} F_i|\le C\, r^2, \quad |D^2_{y'}F_i(y)|\le \frac{C\,r^2}{r^4-y_n^2},\\
&|D_{y_n}D_{y'}F_i(y)|\le  \frac{C\,r^3}{r^4-y_n^2},\quad |D^2_{y_n}F_i(y)|\le \frac{C\,r^4}{r^4-y_n^2}.
\end{aligned}
$$
Thus \eqref{F_bound} holds. The lemma is proved.
\end{proof}

\bibliographystyle{amsplain}
\begin{bibdiv}
\begin{biblist}

\bib{AKLLL}{article}{
      author={Ammari, H.},
      author={Kang, H.},
      author={Lee, H.},
      author={Lee, J.},
      author={Lim, M.},
       title={Optimal estimates for the electric field in two dimensions},
        date={2007},
        ISSN={0021-7824},
     journal={J. Math. Pures Appl. (9)},
      volume={88},
      number={4},
       pages={307\ndash 324},
  url={https://doi-org.proxy.libraries.rutgers.edu/10.1016/j.matpur.2007.07.005},
      review={\MR{2384571}},
}

\bib{AKL}{article}{
      author={Ammari, H.},
      author={Kang, H.},
      author={Lim, M.},
       title={Gradient estimates for solutions to the conductivity problem},
        date={2005},
        ISSN={0025-5831},
     journal={Math. Ann.},
      volume={332},
      number={2},
       pages={277\ndash 286},
  url={https://doi-org.proxy.libraries.rutgers.edu/10.1007/s00208-004-0626-y},
      review={\MR{2178063}},
}

\bib{ar}{article}{
      author={Araujo, F. F.~T.},
      author={Rosenberg, H.~M.},
       title={The thermal conductivity of epoxy-resin/metal-powder composite
  materials from 1.7 to 300k},
        date={1976},
     journal={J. Phys. D},
      volume={9},
      number={4},
       pages={665},
         url={https://dx.doi.org/10.1088/0022-3727/9/4/017},
}

\bib{BASL}{article}{
      author={Babu\v{s}ka, I.},
      author={Andersson, B.},
      author={Smith, P.J.},
      author={Levin, K.},
       title={Damage analysis of fiber composites. {I}. {S}tatistical analysis
  on fiber scale},
        date={1999},
        ISSN={0045-7825},
     journal={Comput. Methods Appl. Mech. Engrg.},
      volume={172},
      number={1-4},
       pages={27\ndash 77},
  url={https://doi-org.proxy.libraries.rutgers.edu/10.1016/S0045-7825(98)00225-4},
      review={\MR{1685902}},
}

\bib{BLY1}{article}{
      author={Bao, E.},
      author={Li, Y.Y.},
      author={Yin, B.},
       title={Gradient estimates for the perfect conductivity problem},
        date={2009},
        ISSN={0003-9527},
     journal={Arch. Ration. Mech. Anal.},
      volume={193},
      number={1},
       pages={195\ndash 226},
  url={https://doi-org.proxy.libraries.rutgers.edu/10.1007/s00205-008-0159-8},
      review={\MR{2506075}},
}

\bib{BLY2}{article}{
      author={Bao, E.},
      author={Li, Y.Y.},
      author={Yin, B.},
       title={Gradient estimates for the perfect and insulated conductivity
  problems with multiple inclusions},
        date={2010},
        ISSN={0360-5302},
     journal={Comm. Partial Differential Equations},
      volume={35},
      number={11},
       pages={1982\ndash 2006},
  url={https://doi-org.proxy.libraries.rutgers.edu/10.1080/03605300903564000},
      review={\MR{2754076}},
}

\bib{BenY}{article}{
      author={Benveniste, Y.},
       title={{Effective thermal conductivity of composites with a thermal
  contact resistance between the constituents: Nondilute case}},
        date={1987},
        ISSN={0021-8979},
     journal={J. Appl. Phys.},
      volume={61},
      number={8},
       pages={2840\ndash 2843},
         url={https://doi.org/10.1063/1.337877},
}

\bib{BenMil}{article}{
      author={Benveniste, Y.},
      author={Miloh, T.},
       title={Neutral inhomogeneities in conduction phenomena},
        date={1999},
        ISSN={0022-5096,1873-4782},
     journal={J. Mech. Phys. Solids},
      volume={47},
      number={9},
       pages={1873\ndash 1892},
         url={https://doi.org/10.1016/S0022-5096(98)00127-6},
      review={\MR{1695877}},
}

\bib{BV}{article}{
      author={Bonnetier, E.},
      author={Vogelius, M.},
       title={An elliptic regularity result for a composite medium with
  ``touching'' fibers of circular cross-section},
        date={2000},
        ISSN={0036-1410},
     journal={SIAM J. Math. Anal.},
      volume={31},
      number={3},
       pages={651\ndash 677},
  url={https://doi-org.proxy.libraries.rutgers.edu/10.1137/S0036141098333980},
      review={\MR{1745481}},
}

\bib{CKN}{article}{
      author={Caffarelli, L.},
      author={Kohn, R.},
      author={Nirenberg, L.},
       title={First order interpolation inequalities with weights},
        date={1984},
        ISSN={0010-437X},
     journal={Compositio Math.},
      volume={53},
      number={3},
       pages={259\ndash 275},
         url={http://www.numdam.org/item?id=CM_1984__53_3_259_0},
      review={\MR{768824}},
}

\bib{CHIEW198390}{article}{
      author={Chiew, Y.C},
      author={Glandt, E.D},
       title={The effect of structure on the conductivity of a dispersion},
        date={1983},
        ISSN={0021-9797},
     journal={J. Coll. Interface Sci.},
      volume={94},
      number={1},
       pages={90\ndash 104},
  url={https://www.sciencedirect.com/science/article/pii/0021979783902382},
}

\bib{DLY2}{article}{
      author={Dong, H.},
      author={Li, Y.Y.},
      author={Yang, Z.},
       title={Gradient estimates for the insulated conductivity problem: {T}he
  non-umbilical case},
        date={2024},
        ISSN={0021-7824,1776-3371},
     journal={J. Math. Pures Appl. (9)},
      volume={189},
       pages={103587},
         url={https://doi.org/10.1016/j.matpur.2024.06.002},
      review={\MR{4779390}},
}

\bib{DLY}{article}{
      author={Dong, H.},
      author={Li, Y.Y.},
      author={Yang, Z.},
       title={Optimal gradient estimates of solutions to the insulated
  conductivity problem in dimension greater than two},
        date={2024},
     journal={J. Eur. Math. Soc.},
         url={https://doi.org/10.4171/jems/1432},
        note={DOI 10.4171/JEMS/1432},
}

\bib{DonYan23}{article}{
      author={Dong, H.},
      author={Yang, Z.},
       title={Optimal estimates for transmission problems including relative
  conductivities with different signs},
        date={2023},
        ISSN={0001-8708,1090-2082},
     journal={Adv. Math.},
      volume={428},
       pages={Paper No. 109160, 28},
         url={https://doi.org/10.1016/j.aim.2023.109160},
      review={\MR{4601785}},
}

\bib{DYZ23}{article}{
      author={Dong, H.},
      author={Yang, Z.},
      author={Zhu, H.},
       title={The insulated conductivity problem with {$p$}-{L}aplacian},
        date={2023},
        ISSN={0003-9527,1432-0673},
     journal={Arch. Ration. Mech. Anal.},
      volume={247},
      number={5},
       pages={95},
         url={https://doi.org/10.1007/s00205-023-01926-0},
      review={\MR{4635021}},
}

\bib{EVERY1992123}{article}{
      author={Every, A.G.},
      author={Tzou, Y.},
      author={Hasselman, D.P.H.},
      author={Raj, R.},
       title={The effect of particle size on the thermal conductivity of
  zns/diamond composites},
        date={1992},
        ISSN={0956-7151},
     journal={Acta Metall. Mater.},
      volume={40},
      number={1},
       pages={123\ndash 129},
  url={https://www.sciencedirect.com/science/article/pii/095671519290205S},
}

\bib{Keller2}{article}{
      author={Flaherty, J.~E.},
      author={Keller, J.~B.},
       title={Elastic behavior of composite media},
        date={1973},
        ISSN={0010-3640,1097-0312},
     journal={Comm. Pure Appl. Math.},
      volume={26},
       pages={565\ndash 580},
         url={https://doi.org/10.1002/cpa.3160260409},
      review={\MR{375910}},
}

\bib{fukushima2024finiteness}{article}{
      author={Fukushima, S.},
      author={Ji, Y.-G.},
      author={Kang, H.},
      author={Li, X.},
       title={Finiteness of the stress in presence of closely located
  inclusions with imperfect bonding},
        date={2024},
     journal={Math. Ann.},
        note={DOI 10.1007/s00208-024-02968-9},
}

\bib{GT}{book}{
      author={Gilbarg, D.},
      author={Trudinger, N.},
       title={Elliptic partial differential equations of second order},
      series={Classics in Mathematics},
   publisher={Springer-Verlag, Berlin},
        date={2001},
        ISBN={3-540-41160-7},
      review={\MR{1814364}},
}

\bib{HASHIN1992767}{article}{
      author={Hashin, Z.},
       title={Extremum principles for elastic heterogenous media with imperfect
  interfaces and their application to bounding of effective moduli},
        date={1992},
        ISSN={0022-5096},
     journal={J. Mech. Phys. Solids},
      volume={40},
      number={4},
       pages={767\ndash 781},
  url={https://www.sciencedirect.com/science/article/pii/002250969290003K},
}

\bib{JiKang}{article}{
      author={Ji, Y-G.},
      author={Kang, H.},
       title={Spectrum of the neumann-poincar\'e operator and optimal estimates
  for transmission problems in presence of two circular inclusions},
        date={2023},
     journal={Int. Math. Res. Not. IMRN},
      number={9},
       pages={7638–7685},
}

\bib{Kang}{incollection}{
      author={Kang, H.},
       title={Quantitative analysis of field concentration in presence of
  closely located inclusions of high contrast},
        date={[2023] \copyright2023},
   booktitle={I{CM}---{I}nternational {C}ongress of {M}athematicians. {V}ol. 7.
  {S}ections 15--20},
   publisher={EMS Press, Berlin},
       pages={5680\ndash 5699},
      review={\MR{4680458}},
}

\bib{KANG20191670}{article}{
      author={Kang, H.},
      author={Yun, K.},
       title={Precise estimates of the field excited by an emitter in presence
  of closely located inclusions of a bow-tie shape},
        date={2019},
        ISSN={0022-247X},
     journal={J. Math. Anal. Appl.},
      volume={479},
      number={2},
       pages={1670\ndash 1707},
  url={https://www.sciencedirect.com/science/article/pii/S0022247X19305797},
}

\bib{Keller}{article}{
      author={Keller, J.~B.},
       title={{Conductivity of a Medium Containing a Dense Array of Perfectly
  Conducting Spheres or Cylinders or Nonconducting Cylinders}},
        date={1963},
     journal={J. Appl. Phys.},
      volume={34},
      number={4},
       pages={991\ndash 993},
         url={https://doi.org/10.1063/1.1729580},
}

\bib{Kel}{article}{
      author={Keller, J.~B.},
       title={{Stresses in Narrow Regions}},
        date={1993},
        ISSN={0021-8936},
     journal={J. Appl. Mech.},
      volume={60},
      number={4},
       pages={1054\ndash 1056},
         url={https://doi.org/10.1115/1.2900977},
}

\bib{li2024optimalgradientestimatesinsulated}{article}{
      author={Li, H.},
      author={Zhao, Y.},
       title={Optimal gradient estimates for the insulated conductivity problem
  with general convex inclusions case},
        date={2024},
         url={https://arxiv.org/abs/2404.17201},
        note={arXiv:2404.17201},
}

\bib{LN}{article}{
      author={Li, Y.Y.},
      author={Nirenberg, L.},
       title={Estimates for elliptic systems from composite material},
        date={2003},
        ISSN={0010-3640},
     journal={Comm. Pure Appl. Math.},
      volume={56},
      number={7},
       pages={892\ndash 925},
         url={https://doi-org.proxy.libraries.rutgers.edu/10.1002/cpa.10079},
      review={\MR{1990481}},
}

\bib{LV}{article}{
      author={Li, Y.Y.},
      author={Vogelius, M.},
       title={Gradient estimates for solutions to divergence form elliptic
  equations with discontinuous coefficients},
        date={2000},
        ISSN={0003-9527},
     journal={Arch. Ration. Mech. Anal.},
      volume={153},
      number={2},
       pages={91\ndash 151},
  url={https://doi-org.proxy.libraries.rutgers.edu/10.1007/s002050000082},
      review={\MR{1770682}},
}

\bib{LY2}{article}{
      author={Li, Y.Y.},
      author={Yang, Z.},
       title={Gradient estimates of solutions to the insulated conductivity
  problem in dimension greater than two},
        date={2023},
        ISSN={0025-5831},
     journal={Math. Ann.},
      volume={385},
      number={3-4},
       pages={1775\ndash 1796},
         url={https://doi.org/10.1007/s00208-022-02368-x},
      review={\MR{4566706}},
}

\bib{MR3059278}{book}{
      author={Lieberman, G.~M.},
       title={Oblique derivative problems for elliptic equations},
   publisher={World Scientific Publishing Co. Pte. Ltd., Hackensack, NJ},
        date={2013},
        ISBN={978-981-4452-32-8},
         url={https://doi.org/10.1142/8679},
      review={\MR{3059278}},
}

\bib{LipVer}{article}{
      author={Lipton, R.},
      author={Vernescu, B.},
       title={Composites with imperfect interface},
        date={1996},
     journal={Proc. R. Soc. Lond. A},
      volume={452},
      number={1945},
       pages={329\ndash 358},
  url={https://royalsocietypublishing.org/doi/abs/10.1098/rspa.1996.0018},
}

\bib{acsphotonics}{article}{
      author={Pacheco-Pe{\~n}a, V.},
      author={Beruete, Miguel},
      author={Fernández-Domínguez, A.~I.},
      author={Luo, Y.},
      author={Navarro-Cía, M.},
       title={Description of bow-tie nanoantennas excited by localized emitters
  using conformal transformation},
        date={2016},
     journal={ACS Photonics},
      volume={3},
      number={7},
       pages={1223\ndash 1232},
         url={https://doi.org/10.1021/acsphotonics.6b00232},
}

\bib{prl}{article}{
      author={Torquato, S.},
      author={Rintoul, M.~D.},
       title={Effect of the interface on the properties of composite media},
        date={1995},
     journal={Phys. Rev. Lett.},
      volume={75},
       pages={4067\ndash 4070},
         url={https://link.aps.org/doi/10.1103/PhysRevLett.75.4067},
}

\bib{Y1}{article}{
      author={Yun, K.},
       title={Estimates for electric fields blown up between closely adjacent
  conductors with arbitrary shape},
        date={2007},
        ISSN={0036-1399},
     journal={SIAM J. Appl. Math.},
      volume={67},
      number={3},
       pages={714\ndash 730},
         url={https://doi-org.proxy.libraries.rutgers.edu/10.1137/060648817},
      review={\MR{2300307}},
}

\bib{Y2}{article}{
      author={Yun, K.},
       title={Optimal bound on high stresses occurring between stiff fibers
  with arbitrary shaped cross-sections},
        date={2009},
        ISSN={0022-247X},
     journal={J. Math. Anal. Appl.},
      volume={350},
      number={1},
       pages={306\ndash 312},
  url={https://doi-org.proxy.libraries.rutgers.edu/10.1016/j.jmaa.2008.09.057},
      review={\MR{2476915}},
}

\end{biblist}
\end{bibdiv}
\end{document}